\documentclass{amsart}

\usepackage{amsmath}
\usepackage{amsfonts}
\usepackage{mathtools}
\usepackage{amssymb,amsthm,mathrsfs}
\usepackage[utf8]{inputenc}
\usepackage{hyperref}
\usepackage{enumitem}
\usepackage{microtype}
\usepackage{needspace}
\usepackage{xcolor}

\hypersetup{
  colorlinks=true,
  linkcolor=blue!45!black,
  citecolor=green!35!black,
  urlcolor=blue!45!black,
  pdftitle={The Bernstein--Sato polynomial of the Hankel determinant},
  pdfauthor={Xinyu Lin},
  pdfsubject={Bernstein--Sato polynomial of the symmetric Hankel determinant},
  pdfkeywords={Bernstein--Sato polynomial, Hankel determinant, nearby cycles,
  Archimedean zeta function, Selberg integral, secant variety}
}

\allowdisplaybreaks
\numberwithin{equation}{section}
\theoremstyle{plain}
\newtheorem{theorem}{Theorem}
\numberwithin{theorem}{section}
\newtheorem{prop}[theorem]{Proposition}
\newtheorem{lemma}[theorem]{Lemma}
\newtheorem{coro}[theorem]{Corollary}
\theoremstyle{definition}

\newtheorem{example}[theorem]{Example}
\newtheorem*{aistatement}{AI statement}
\newtheorem*{ack}{Acknowledgment}
\theoremstyle{remark}

\newcommand{\A}{\mathbb{A}}
\newcommand{\C}{\mathbb{C}}
\renewcommand{\P}{\mathbb{P}}
\newcommand{\Q}{\mathbb{Q}}
\newcommand{\R}{\mathbb{R}}
\newcommand{\Z}{\mathbb{Z}}

\newcommand{\cA}{\mathcal{A}}
\newcommand{\cM}{\mathcal{M}}
\newcommand{\cQ}{\mathcal{Q}}
\newcommand{\cR}{\mathcal{R}}
\newcommand{\cZ}{\mathcal{Z}}
\newcommand{\cO}{\mathscr{O}}
\newcommand{\shD}{\mathscr{D}}

\newcommand{\into}{\hookrightarrow}
\newcommand{\wt}{\widetilde}
\newcommand{\wb}{\overline}
\newcommand{\IC}{\operatorname{IC}}
\newcommand{\Supp}{\operatorname{Supp}}
\newcommand{\ord}{\operatorname{ord}}
\newcommand{\e}{\mathrm e}
\newcommand{\ii}{\mathrm i}
\newcommand{\dd}{\,\mathrm d}

\title[The Bernstein--Sato polynomial of the Hankel determinant]
{The Bernstein--Sato polynomial of the Hankel determinant}
\author{Xinyu Lin}
\address{Xinyu Lin, School of Mathematical Sciences, Zhejiang University, Hangzhou 310058, P. R. China}
\email{xinyulin@zju.edu.cn}

\subjclass[2020]{Primary 14F10; Secondary 32S40, 33C60}
\keywords{Bernstein--Sato polynomial, Hankel determinant, nearby cycles,
Archimedean zeta function, Selberg integral, secant variety}

\begin{document}

\begin{abstract}
Let
\[
f_m(x_0,\ldots,x_{2m-2})
=\det(x_{i+j})_{0\leq i,j\leq m-1}
\]
be the determinant of the generic symmetric Hankel matrix of size $m$.
We determine its global Bernstein--Sato polynomial.  The proof combines the
nearby cycles of the corresponding secant hypersurface with an explicit
Archimedean zeta integral over the cone of positive definite Hankel matrices.
A Gauss--Radau parametrization reduces this integral to a Morris integral and
hence to a quotient of Gamma functions.  Its primitive poles are detected by
test functions supported arbitrarily close to the vertex of the cone.
Lichtin's theorem, applied to a secant-stratum log resolution, fixes the
integer shift between these poles and the roots of the Bernstein--Sato
polynomial.  A relative $\shD[s]$-lattice realization of nearby cycles,
together with the simplicity of the corresponding intersection complexes,
gives uniqueness and multiplicity one in every monodromy class.  The induction
in the matrix size follows from the local product structure of secant
varieties.
\end{abstract}

\maketitle
\hypersetup{linkcolor=black}
\tableofcontents
\hypersetup{linkcolor=blue!45!black}

\section{Introduction}

Let $f\in\C[x_1,\ldots,x_N]$ be a nonconstant polynomial, and let
\[
D_N=\C\langle x_1,\ldots,x_N,
\partial_{x_1},\ldots,\partial_{x_N}\rangle
\]
be the $N$-th Weyl algebra of polynomial differential operators.  The
Bernstein--Sato polynomial is defined by a functional equation
\begin{equation}\label{eq:bernstein-equation-intro}
 P(s)f^{s+1}=b(s)f^s,
 \qquad P(s)\in D_N[s],\quad b(s)\in\C[s].
\end{equation}
Here $s$ is a central indeterminate and $f^s$ is a formal symbol; the precise
$\shD[s]$-module interpretation is recalled in
Section~\ref{subsec:dmodules}.  Bernstein proved that the polynomials $b(s)$
occurring in \eqref{eq:bernstein-equation-intro} form a nonzero ideal of
$\C[s]$ \cite{Bernstein}.  Its monic generator is the
\emph{Bernstein--Sato polynomial}, or $b$-function, of $f$ and is denoted by
$b_f(s)$.  Replacing polynomial differential operators by their germs at a
point $x$ gives the local Bernstein--Sato polynomial $b_{f,x}(s)$.

The polynomial $b_f(s)$ is a subtle invariant of the hypersurface $(f=0)$.
Its roots are negative rational numbers \cite{Kashiwara}, and if $\xi$ is a
local root, then $\exp(-2\pi\ii\xi)$ occurs as an eigenvalue of nearby-cycle
monodromy \cite{Kashiwara,Malgrange}.  Iterating
\eqref{eq:bernstein-equation-intro} also gives meromorphic continuation and
restricts the possible poles of Archimedean zeta distributions
\cite{Bernstein}.  These structural theorems do not by themselves determine
the exact polynomial.  Explicit formulas are most accessible in the presence
of substantial symmetry.  For example, for the determinant of a generic
$d\times d$ matrix one has
\[
 b_{\det}(s)=\prod_{i=1}^{d}(s+i)
\]
and, more generally, equivariant and representation-theoretic methods apply
to prehomogeneous relative invariants, quiver semi-invariants, maximal
minors, and submaximal Pfaffians; see, for example,
\cite{SatoShintani,LorinczSlices,LRWW}.

The polynomial considered here is determinantal but substantially more
structured.  For $m\geq 1$, let
\begin{equation}\label{eq:def-fm}
H_m(x)=
\begin{pmatrix}
x_0&x_1&x_2&\cdots&x_{m-1}\\
x_1&x_2&x_3&\cdots&x_m\\
x_2&x_3&x_4&\cdots&x_{m+1}\\
\vdots&\vdots&\vdots&\ddots&\vdots\\
x_{m-1}&x_m&x_{m+1}&\cdots&x_{2m-2}
\end{pmatrix},
\qquad
f_m(x)=\det H_m(x),
\end{equation}
where $x=(x_0,\ldots,x_{2m-2})$.  The defining Hankel condition is
$H_{i,j}=H_{i',j'}$ whenever $i+j=i'+j'$.  Thus $f_m$ is homogeneous of
degree $m$ in
\begin{equation}\label{eq:N}
N=N_m=2m-1
\end{equation}
variables.  We write $b_{f_m}(s)$ for its global Bernstein--Sato
polynomial.  The rational normal curve and its secant varieties carry a
natural $SL_2(\C)$-action, under which $f_m$ is invariant.  The $b$-function
theory for relative invariants of prehomogeneous vector spaces is developed
in \cite[Sections 3--4]{SatoPVS}.  For
$m\geq3$, however, $SL_2(\C)\times\C^\times$ has no open orbit on
$\C^{2m-1}$, so this method does not apply.  We instead use the secant
stratification and its local product structure.

Our main result is as follows.

\begin{theorem}\label{thm:main-intro}
For every $m\geq 1$,
\begin{equation}\label{eq:main-formula}
b_{f_m}(s)
=(s+1)\prod_{q=2}^{m}
\left[
\left(s+2-\frac1q\right)
\prod_{\substack{1\leq k\leq q-2\\(k,q)=1}}
\left(s+2+\frac{k}{q}\right)
\right].
\end{equation}
All roots in \eqref{eq:main-formula} are simple and pairwise distinct.  In
particular,
\[
\deg b_{f_m}=1+\sum_{q=2}^{m}\varphi(q).
\]
Here $\varphi$ denotes Euler's totient function.
\end{theorem}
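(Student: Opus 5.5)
We argue by induction on $m$, following the strategy announced in the abstract. Write $Z_m=(f_m=0)$. The case $m=1$ is trivial, since $f_1=x_0$ and $b_{f_1}=s+1$. For $m\geq2$, the hypersurface $Z_m$ is the $(m-1)$-st secant variety $\sigma_{m-1}$ of the rational normal curve of degree $2m-2$. Its singular strata are the lower secant varieties $\sigma_{q-1}\setminus\sigma_{q-2}$, together with the vertex $0$.

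\textbf{Reduction to the vertex.} At a point of $\sigma_{r}\setminus\sigma_{r-1}$ with $r<m$, the local product structure of secant varieties says that $(\C^{2m-1},Z_m)$ is locally analytically isomorphic to $(\C^{2(m-r)-1},Z_{m-r})\times\C^{2r}$. Away from the vertex we therefore have $b_{f_m,x}=b_{f_{m-r}}$, and the induction hypothesis gives the factors with $q\leq m-1$. Since $b_{f_m}=\operatorname{lcm}_x b_{f_m,x}$ and $f_m$ is homogeneous, it remains to determine the new roots contributed at $0$. We must show two things: these new roots are exactly $-2+1/m$ and $-2-k/m$ for $1\leq k\leq m-2$ with $(k,m)=1$, and no old root acquires multiplicity greater than one at the origin.

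\textbf{Lower bound via the zeta integral.} Consider $Z(s,\phi)=\int_{\mathcal P_m}f_m(x)^s\phi(x)\dd x$ over the cone $\mathcal P_m$ of positive definite Hankel matrices. By Hamburger, these are the moment sequences $x_j=\sum_i w_i t_i^j$, $w_i>0$. Take $\phi$ radial-Gaussian, or a Gauss weight on $\mathcal P_m$, and change variables to nodes and weights by the Gauss--Radau parametrization. The identity $\det H_m=\prod_i w_i\prod_{i<j}(t_i-t_j)^2$ and the Jacobian, which is again a Vandermonde power times a weight monomial, turn $Z$ into a Selberg/Morris integral. This yields an explicit product of Gamma factors of the form $\prod_j\Gamma(\alpha_j s+\beta_j)$ with $\alpha_j$ having denominators up to $m$. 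Reading off the poles that are new at level $m$ gives candidates $\xi$ with denominator exactly $m$. To show that these poles are genuine, we use test functions supported in arbitrarily small neighbourhoods of $0$. A pole of $Z(s,\phi)$ at $\xi$ forces a root of $b_{f_m,0}$ at $\xi+j$ for some integer $j\geq0$. To pin down the shift $j$, we apply Lichtin's theorem to a log resolution adapted to the secant stratification, namely iterated blowups of $\sigma_1\subset\sigma_2\subset\cdots$, and compute the discrepancies and multiplicities $(k_E,N_E)$. These give candidate roots $-(k_E+1+\nu)/N_E$, and for the exceptional divisor over $0$ we expect $N_E=m$ and $k_E+1=2m-1$. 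Combining the two computations should place the roots exactly in $(-3,-1)$ at $-2+1/m$ and $-2-k/m$. The coprimality condition arises because reducible fractions already appear at smaller $q$, and the upper bound is also controlled here.

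\textbf{Upper bound and multiplicity one.} This is the main obstacle. We must show that nothing else occurs at $0$: no extra integer translates, no roots with higher multiplicity, and no $k=m-1$ term, which corresponds to $-3+1/m$. We plan to realize $\psi_{f_m}$ through a relative $\shD[s]$-lattice, so that $b_f$ is the minimal polynomial of $s$ acting on $\shD[s]f^s/\shD[s]f^{s+1}$. Each monodromy eigenspace of nearby cycles decomposes into intersection complexes of the strata $\sigma_{q-1}$ with rank-one local systems. We will show that these are simple and appear once, using the fact that $\sigma_{q-1}\setminus\sigma_{q-2}$ has fundamental group controlled by $\mu_q$-torsion. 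From this we conclude that $s$ acts semisimply with one root per monodromy class $\exp(-2\pi\ii\xi)$. This gives both uniqueness of the integer shift and simplicity of the roots. Finally, the roots are pairwise distinct because the fractions $k/q$ in lowest terms are distinct. The degree count then follows from $1+(1+\#\{1\le k\le q-2:(k,q)=1\})=1+\varphi(q)$, since $q-1$ is always coprime to $q$.
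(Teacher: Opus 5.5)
\textbf{Verdict: genuine gap.} Your outline follows the paper's route: inherited roots via transverse slices, new vertex roots from a cone zeta integral in Gauss--Radau coordinates, the integer shift fixed by Lichtin's bound with data $(m,2m-2)$ on the vertex divisor, and uniqueness and multiplicity one from simple nearby-cycle summands in Wu's lattice picture. But the existence step fails with the weight you propose.

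\emph{The weight.} Take any weight $\phi$ that is even under $x\mapsto-x$, such as a radial Gaussian, and write $x=\rho y$ with $\ell_m(y)=1$. The vertex then contributes at $ms+N=-n$ only through $\frac1{n!}\partial_\rho^n\phi(\rho y)|_{\rho=0}$, which vanishes for odd $n$. The angular integrals over $\Sigma_m$ involve only boundary points away from $0$. Only the divisors $E_q$ with $q<m$ lie over such points, so these integrals have no poles of reduced denominator $m$. Since $s=-2-k/m$ has $n=k+1$, the roots with $k$ even are invisible to such an integral; the first case is $s=-12/5$ for $m=5$.

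There is a second problem. A Gaussian does not separate in the node/weight coordinates, and $\e^{-\operatorname{tr}H_m}$ produces the node density $\prod_a(1+u_a^2+\cdots+u_a^{2r})^{-\tau}$, which is not of Morris type for $m\geq3$. An explicit formula is indispensable here. The candidate points lie far outside the convergence region $\operatorname{Re}s>-1$ of the angular factor $\cA_m$, so only an exact evaluation can exclude a zero of $\cA_m$ cancelling the radial pole.

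The missing idea is the specific linear Laplace weight $\e^{-\ell_m}$, where $\ell_m(x)=\sum_j\binom{r}{j}x_{2j}=L_x\bigl((1+u^2)^r\bigr)$ and $r=m-1$. This weight does four things:
\begin{enumerate}[label=(\alph*)]
\item It is coercive on the closed cone.
\item It gives vertex residue $(-1)^n/n!$ for every $n$.
\item Integrating out the weights yields $\Gamma(\tau-1)\Gamma(\tau)^r\prod_a(1+u_a^2)^{-r\tau}$, with $\tau=s+2$.
\item After the substitution $u_a=\tan(\theta_a/2)$, $z_a=\e^{\ii\theta_a}$, each $|1+z_a|$ occurs with total exponent $(2-2\tau)+2r\tau-2-2\tau(r-1)=0$. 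What remains is the Morris integral with $A=B=\tau-1$ and $\gamma=\tau$.
\end{enumerate}
Non-cancellation is then read off from the fact that every Gamma argument is non-integral at the primitive points.

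\emph{Uniqueness and multiplicity one.} Fundamental groups of the strata cannot deliver this. They constrain which rank-one local systems may occur. They do not show that each generalized eigensummand of $\psi_{f_m}$ is a single simple object. A priori such a summand could have composition factors on several strata, or be a nonsplit extension. The lattice filtration could then jump twice, and $s+\beta$ need not vanish.

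This is exactly Brogan's computation, which the paper cites:
\begin{itemize}
\item for $\lambda$ primitive of order $q>1$, $\psi_{f_m,\lambda}\Q_{\C^N}[N]\simeq\IC(L_\lambda)$, supported on $X_{m-q}$;
\item $\psi_{f_m,1}\Q_{\C^N}[N]\simeq\Q_{D_m}[N-1]\simeq\IC_{D_m}$, because $D_m$ is a rational homology manifold.
\end{itemize}
Combine this with Wu's identification of $\Psi_{f,\beta}$ with $\shD[s]_{\mathfrak m_{-\beta}}f^{s-K}/\shD[s]_{\mathfrak m_{-\beta}}f^{s+K}$. Simplicity then allows only one nonzero adjacent quotient and forces the nilpotent endomorphism $s+\beta$ to vanish, as in Lemma~\ref{lem:one-jump}.
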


The hypersurface $D_m=(f_m=0)\subset\C^{2m-1}$ is the affine cone over the
largest proper secant variety of the rational normal curve of degree $2m-2$.
Brogan proved that the nearby cycles of $f_m$ are intersection complexes of
rank-one local systems supported on the secant strata \cite{Brogan}.  These
results determine all possible monodromy eigenvalues and hence the possible
root classes modulo $\Z$.  They also show that every nearby-cycle
eigensummand is simple.  They do not, however, determine which integral
translate occurs in a given class or the multiplicity of that root.

The multiplicity and uniqueness questions are settled by the
Beilinson--Bernstein construction in the form proved by Wu \cite{Wu}.  After
localizing $\C[s]$ at a fixed monodromy class, the adjacent quotients
\[
\frac{\shD[s]f^{s-k}}{\shD[s]f^{s-k+1}}
\]
form the successive factors of a filtration.  The factor indexed by $k$ is
nonzero exactly when the corresponding integral translate is a root.
Simplicity allows only one nonzero factor.  Moreover, the central endomorphism
induced by $s$ is nilpotent; simplicity forces this nilpotent endomorphism to
vanish, which gives multiplicity one.

It remains to prove existence and to identify the translate.  The new classes
at level $m$ are the monodromy classes of exact order $m$; Brogan's support
calculation shows that they occur only at the vertex.  To detect them, we use
the positive definite Hankel cone
\[
\Omega_m=\{x\in\R^{2m-1}:H_m(x)>0\}.
\]
Set
\[
\ell_m(x)=\sum_{j=0}^{m-1}\binom{m-1}{j}x_{2j}.
\]
A node-and-weight construction, equivalent to a Gauss--Radau formula for the
truncated moment functional, gives global coordinates on a full-measure open
subset of $\Omega_m$.  In these coordinates $f_m$, the Jacobian, and
$\ell_m$ all factor.  The resulting Laplace zeta integral
\begin{equation}\label{eq:intro-zeta}
\cZ_m(s)=\int_{\Omega_m}
\e^{-\ell_m(x)}f_m(x)^s\dd x
\end{equation}
reduces to the Morris integral and admits the exact evaluation
\begin{equation}\label{eq:intro-gamma}
\cZ_m(s)=
2^{m-1-m(m-1)(s+2)}(2\pi)^{m-1}\Gamma(s+1)
\prod_{j=2}^{m}
\frac{\Gamma(j(s+2)-1)}{\Gamma((j-1)(s+2))}.
\end{equation}
The final numerator in \eqref{eq:intro-gamma} is
$\Gamma(m(s+2)-1)=\Gamma(ms+N)$, precisely the radial factor produced by
blowing up the vertex.  At
\[
s=-2+\frac1m,
\qquad
s=-2-\frac{k}{m}
\quad(1\leq k\leq m-2,\ (k,m)=1),
\]
this factor has a simple pole.  The remaining Gamma quotient is finite and
nonzero there, so integration over the exceptional directions cannot cancel
these poles.

A pole at $s_0$ implies only that $b_f(s_0+j)=0$ for some
$j\in\Z_{\geq0}$; in general it does not imply that $b_f(s_0)=0$.  To remove
this integer ambiguity, we use the affine log resolution obtained by first
blowing up the vertex of the cone and then pulling back Bertram's projective
secant resolution.  The resulting divisors $E_q$, $1\leq q\leq m$, have
numerical data
\[
\ord_{E_q}(f_m)=q,
\qquad
\ord_{E_q}(K_{\wt Y_m/\C^N})=2q-2.
\]
Lichtin's theorem, as stated in
\cite[Theorem 3.5.1]{PopaNotes}, then says that every root is of the form
\[
-\frac{2q-1+\ell}{q},\qquad \ell\in\Z_{\geq0}.
\]
If the monodromy eigenvalue has exact order $m$, the reduced denominator forces
$q=m$, and the corresponding exponent is at least
$N/m=2-1/m$.  This lower bound forces $j=0$ at each of the poles displayed
above.  Finally, Brogan's local Hankel elimination identifies the transverse
singularity along a lower secant stratum with a smaller determinant $f_q$.
Consequently $b_{f_q}(s)$ divides $b_{f_m}(s)$ for $q<m$.  The vertex supplies
the new primitive order-$m$ roots, while the transverse slices supply all
roots inherited from smaller sizes; this closes the induction.

We use Brogan's secant-chart and nearby-cycle theorems
\cite[Lemma 3.1, Corollary 3.2, Theorems 3.17 and 3.22]{Brogan}, Bertram's
secant resolution \cite{Bertram} in the detailed form given in
\cite[Section 3]{SchnellYang}, and Lichtin's resolution-theoretic theorem
\cite[Theorem 5]{Lichtin}, as formulated in
\cite[Theorem 3.5.1]{PopaNotes}.  On the $D$-module side, we use Wu's
$b$-function construction and Riemann--Hilbert comparison for nearby cycles
\cite[Theorem 1.1, Theorem 2.10, Definition 2.11, and Proposition 2.12]{Wu}.
For meromorphic continuation we use Bernstein's componentwise theorem
\cite{Bernstein}, while the explicit analytic calculation rests on the Morris
integral evaluation \cite[(1.17)--(1.18)]{ForresterWarnaar}.  The hypotheses of
these results are checked where they are applied.

The paper is organized as follows.  Section~\ref{sec:geometry} first recalls
the $D$-module and nearby-cycle constructions used in the proof and then
collects the secant geometry, combining the two inputs to control integral
translates and multiplicities of roots.  The positive Hankel cone and its
Gauss--Radau coordinates are established in Section~\ref{sec:cone}.  The zeta
integral is evaluated and its primitive poles are isolated in
Section~\ref{sec:zeta}.  In Section~\ref{sec:poles-roots} we pass from those
poles to roots at their exact positions.  The induction and the proof of
Theorem~\ref{thm:main-intro} are completed in Section~\ref{sec:induction}.  The
coordinate construction is illustrated for a Hankel matrix of size three in
Example~\ref{ex:m3-coordinates}; factorizations of
the resulting low-rank $b$-functions and reproducible Macaulay2 commands appear
after the proof, in Section~\ref{sec:checks}.

\begin{aistatement}
This work was partially assisted by GPT-5.6-Sol.  Specifically, the
model proposed the Morris-integral strategy used in the proof of
Theorem~\ref{thm:zeta-formula} and supplied preliminary arguments for the
node-and-weight construction in Theorem~\ref{thm:radau}.  The author
subsequently refined and finalized these arguments.
\end{aistatement}

\begin{ack}
The author thanks his advisor, Lei Wu, for suggesting this problem, for many
helpful discussions, and for valuable comments on an earlier version of the
manuscript.
\end{ack}

\section{\texorpdfstring{$D$-modules}{D-modules} and secant-geometric
preliminaries}\label{sec:geometry}

\subsection{Differential operators, $b$-functions, and nearby cycles}
\label{subsec:dmodules}

We recall the $D$-module setup used later; see
\cite[Sections 2.1--2.2]{Wu}.

Let $Y$ be a smooth complex algebraic variety, let $f$ be a nonzero regular
function, put $D=(f=0)$ and $U=Y\setminus D$, and denote the inclusions by
\[
 j:U\into Y,
 \qquad
 i:D\into Y.
\]

For a holonomic $\shD_U$-module, we write $j_*$ for its maximal
direct-image extension across $D$, and $j_!$ for the minimal extension
defined by holonomic duality.  For the closed embedding $i$, the notation
$i_*$ denotes the direct-image functor.  We also write
$\operatorname{DR}$ for the de Rham functor.

The sheaf $\shD_Y$ is the sheaf of algebraic differential operators on $Y$,
and
\[
\shD_Y[s]=\shD_Y\otimes_{\C}\C[s]
\]
is obtained by adjoining a central indeterminate $s$.  Here $\cO_Y$ denotes
the structure sheaf of regular functions, viewed as a left $\shD_Y$-module
through differentiation.  Recall that a coherent $\shD_Y$-module is called
holonomic when its characteristic variety has the smallest possible
dimension, namely $\dim Y$.  Regular holonomic $\shD_Y$-modules are the
algebraic differential equations with regular singularities; under the
regular Riemann--Hilbert correspondence, they correspond to perverse sheaves.
A holonomic module is \emph{simple} if it has no nonzero proper
$\shD_Y$-submodule.  These are the only structural properties of holonomic
$D$-modules used below; see \cite[Chapters 3 and 7]{HTT} for the underlying
theory.

For $Y=\C^N$, the global sections of $\shD_{\C^N}$ form the Weyl algebra
$D_N$ generated by the coordinate functions $x_i$ and the derivations
$\partial_{x_i}$.  In this case
\[
D_N[s]=D_N\otimes_{\C}\C[s].
\]
For sheaves of differential operators, when the underlying space is clear we
suppress the subscript and write simply $\shD[s]$.

Let $\cM$ be a left holonomic $\shD_Y$-module, put $\cM_U=\cM|_U$, and
choose a coherent $\cO_Y$-submodule
$\cM_0\subset j_*\cM_U$ such that
$\cM_U=\shD_U\cdot\cM_0|_U$, as in \cite[Section 2.1]{Wu}.  The formal
symbol $f^s$ gives the free
$\C[s]$-module
\[
 j_*(\cM_U[s]\cdot f^s)=j_*(\cM_U)\cdot f^s\otimes_{\C}\C[s].
\]
Thus $\cM_U[s]=\cM_U\otimes_{\C}\C[s]$; the factor $f^s$ is a symbol, not
a single-valued function on $U$.
Its $\shD_Y[s]$-action is characterized, for a vector field $v$, by
\begin{equation}\label{eq:formal-power-action}
 v(mf^s)=\left(v(m)+s\frac{v(f)}{f}m\right)f^s.
\end{equation}
For $k\in\Z$, set
\[
 \shD_Y[s]\cM_0\cdot f^{s+k}
 \subset j_*(\cM_U[s]\cdot f^s)
\]
for the coherent submodule generated by $\cM_0f^{s+k}$.  The $b$-function of
$\cM_U$ along $f$, relative to the chosen lattice $\cM_0$, is the monic
polynomial of least degree that annihilates
\begin{equation}\label{eq:wus-b-quotient}
 \frac{\shD_Y[s]\cM_0\cdot f^s}
      {\shD_Y[s]\cM_0\cdot f^{s+1}}.
\end{equation}
When $\cM_U=\cO_U$ and $\cM_0=\cO_Y$, this is the usual Bernstein--Sato
polynomial $b_f(s)$.

For $\alpha\in\C$, let
\[
 \mathfrak m_\alpha=(s-\alpha)\subset\C[s]
\]
be the maximal ideal corresponding to $\alpha$.  A subscript
$\mathfrak m_\alpha$ means scalar
localization over $\C[s]$; for example,
\[
 \bigl(\shD_Y[s]\cM_0\cdot f^{s+k}\bigr)_{\mathfrak m_\alpha}
 =\shD_Y[s]_{\mathfrak m_\alpha}\cM_0\cdot f^{s+k}.
\]
We use the following form of the extension and comparison theorems in
\cite{Wu}.

\begin{theorem}[Wu]\label{thm:wu-preliminaries}
Fix $\beta\in\C$.  For every sufficiently large integer $K$, the natural
morphism from the minimal to the maximal extension is injective and
\begin{align}
 j_*\bigl(\cM_U[s]_{\mathfrak m_{-\beta}}\cdot f^s\bigr)
 &=\shD_Y[s]_{\mathfrak m_{-\beta}}\cM_0\cdot f^{s-K},
 \label{eq:wu-jstar}\\
 j_!\bigl(\cM_U[s]_{\mathfrak m_{-\beta}}\cdot f^s\bigr)
 &=\shD_Y[s]_{\mathfrak m_{-\beta}}\cM_0\cdot f^{s+K}.
 \label{eq:wu-jshriek}
\end{align}
Consequently the $\beta$-nearby-cycle module of $\cM$ is
\begin{equation}\label{eq:wu-nearby-quotient}
 \Psi_{f,\beta}\cM
 =
 \frac{\shD_Y[s]_{\mathfrak m_{-\beta}}\cM_0\cdot f^{s-K}}
      {\shD_Y[s]_{\mathfrak m_{-\beta}}\cM_0\cdot f^{s+K}}.
\end{equation}
It depends only on $\cM_U$, is a holonomic $\shD_Y$-module supported on $D$,
and is annihilated by some power of $s+\beta$.  If $\cM$ is regular
holonomic, then
\begin{equation}\label{eq:wu-rh-comparison}
 \operatorname{DR}(\Psi_{f,\beta}\cM)
 \simeq i_*\psi_{f,\lambda}\operatorname{DR}(\cM)[-1],
 \qquad \lambda=\exp(2\pi\ii\beta).
\end{equation}
\end{theorem}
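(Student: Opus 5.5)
This statement is quoted from Wu's work rather than proved from scratch, so the plan is to assemble it from the results cited in \cite[Theorem 1.1, Theorem 2.10, Definition 2.11, Proposition 2.12]{Wu}. Throughout, the only structure needed is the chain of coherent $\shD_Y[s]$-submodules $\shD_Y[s]\cM_0 f^{s+k}$, decreasing in $k$, inside $j_*(\cM_U[s]f^s)$. The key input is the existence of a nonzero $b$-function $b(s)$ for $\cM_0$ along $f$. This follows from holonomicity of $\cM_U$ by Bernstein's argument, or Kashiwara's in the analytic setting. It gives $b(s)\,\shD[s]\cM_0f^{s}\subset \shD[s]\cM_0 f^{s+1}$, and, after shifting $s\mapsto s+k$, $b(s+k)\,\shD[s]\cM_0f^{s+k}\subset\shD[s]\cM_0f^{s+k+1}$.

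First I localize at $\mathfrak m_{-\beta}$. Since $b$ has finitely many roots, $b(s+k)$ is a unit in $\C[s]_{\mathfrak m_{-\beta}}$ for all $|k|$ large. Hence the localized chain stabilizes in both directions:
\[
\shD[s]_{\mathfrak m_{-\beta}}\cM_0f^{s+k}=\shD[s]_{\mathfrak m_{-\beta}}\cM_0f^{s+K}\quad(k\ge K),
\]
and similarly for $k\le -K$. Since every section of $j_*(\cM_U[s]f^s)$ is of the form $f^{-k}\cdot(\text{section of } \cM_0[s]f^s)$ for some $k$, the stable lower term exhausts the localized maximal extension, which is \eqref{eq:wu-jstar}. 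For \eqref{eq:wu-jshriek} I would dualize. The holonomic dual of the system $\shD[s]\cM_0f^{s+k}$ is a system of the same type for the dual module, with $s$ replaced by $-s$ up to a shift. This is exactly Wu's Definition 2.11 and Proposition 2.12, and it identifies the stable upper term with the minimal extension. Injectivity of $j_!\to j_*$ is then the inclusion of the stable upper term into the stable lower term.

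The quotient \eqref{eq:wu-nearby-quotient} is a finite composite of the subquotients $\shD[s]\cM_0f^{s+k}/\shD[s]\cM_0f^{s+k+1}$, each annihilated by $b(s+k)$. After localization, only those factors with $b(s+k)$ vanishing at $s=-\beta$ survive, and each is killed by a power of $s+\beta$. The quotient is therefore holonomic as a $\shD_Y$-module and supported on $D$, because each factor restricts to zero on $U$. Independence of the lattice $\cM_0$ follows because both stable terms are intrinsic, being $j_*$ and $j_!$.

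The Riemann--Hilbert comparison \eqref{eq:wu-rh-comparison} is the main obstacle. Here I would invoke Wu's Theorem 2.10. It identifies $\Psi_{f,\beta}$ with the generalized $(s+\beta)$-eigenspace of the Kashiwara--Malgrange $V$-filtration graded piece, via the graph embedding in which $s=-\partial_t t$. The classical comparison of $\mathrm{gr}_V$ with $\psi_{f,\lambda}$ then finishes the argument.
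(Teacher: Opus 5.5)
Your plan of assembling the statement from Wu's results matches the paper's approach. Your stabilization argument for \eqref{eq:wu-jstar} is sound up to one slip. Since only $\cM_U=\shD_U\cdot\cM_0|_U$ is assumed, a section of $j_*(\cM_U[s]f^s)$ need not be $f^{-k}$ times a section of $\cM_0[s]f^s$. What you actually need is $(Pm)f^{s-j}\in\shD_Y[s]\cM_0f^{s-j-\operatorname{ord}P}$ for $m\in\cM_0$ and $P\in\shD_Y$. This follows from the Leibniz rule by induction on $\operatorname{ord}P$, using $f\cM_0\subset\cM_0$. The slip is harmless when $\cM_0=\cO_Y$, which is the only case used later.

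\textbf{The gap: \eqref{eq:wu-jshriek} and injectivity.} The genuine gap is \eqref{eq:wu-jshriek} together with the injectivity of $j_!\to j_*$, which is the substantive half of the extension theorem.
\begin{itemize}
\item The modules $\shD_Y[s]_{\mathfrak m_{-\beta}}\cM_0f^{s+k}$ are torsion-free over $\C[s]_{\mathfrak m_{-\beta}}$ and are not holonomic $\shD_Y$-modules. So ordinary holonomic duality does not apply to them directly. You would need a relative duality over $\C[s]_{\mathfrak m_{-\beta}}$, or the holonomic truncations $\cM_U[s]f^s/(s+\beta)^\ell$ followed by a limit.
\item Your claim that each $\shD_Y[s]\cM_0f^{s+k}$ dualizes to a lattice of the same shape for $\mathbb D\cM_U$, with $s\mapsto -s$, is not available for general $f$ and $\cM$. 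What is available in general, and what you need, concerns only the stabilized localized terms. That statement is precisely the assertion to be proved.
\item Your proposal does not prove it, and the results you invoke for it do not contain it.
\end{itemize}

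\textbf{Citations.} This is where your proposal departs from the paper. The paper takes:
\begin{itemize}
\item \eqref{eq:wu-jstar}, \eqref{eq:wu-jshriek} and the injectivity from \cite[Theorem 2.10]{Wu};
\item \eqref{eq:wu-nearby-quotient} from \cite[Definition 2.11]{Wu}, which defines $\Psi_{f,\beta}$ and presupposes the extension theorem;
\item holonomicity and nilpotence from \cite[Proposition 2.12]{Wu};
\item \eqref{eq:wu-rh-comparison} from \cite[Theorem 1.1]{Wu}.
\end{itemize}
You assign the $j_!$ identification to Definition 2.11 and Proposition 2.12, and the Riemann--Hilbert comparison to Theorem 2.10. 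Since your argument consists of assembling cited results, both substantive steps then rest on statements that do not assert them.

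\textbf{Two further points.} Your $V$-filtration route to \eqref{eq:wu-rh-comparison} is reasonable in principle. However, identifying the quotient \eqref{eq:wu-nearby-quotient} with the generalized $(s+\beta)$-eigenspace of $\mathrm{gr}_V$ for the graph embedding is itself a comparison theorem, which must be proved or correctly cited. Holonomicity also does not follow from the quotient being killed by a power of $s+\beta$; that gives only $\shD_Y$-coherence. You need Kashiwara's theorem that each subquotient $\shD_Y[s]\cM_0f^{s+k}/\shD_Y[s]\cM_0f^{s+k+1}$ is holonomic, which is the input behind his proof that $b$ exists, or else \cite[Proposition 2.12]{Wu}.
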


Equations \eqref{eq:wu-jstar}--\eqref{eq:wu-jshriek} and the injectivity are
\cite[Theorem 2.10]{Wu}; equation \eqref{eq:wu-nearby-quotient} is
\cite[Definition 2.11]{Wu}; holonomicity and nilpotence are
\cite[Proposition 2.12]{Wu}; and \eqref{eq:wu-rh-comparison} is
\cite[Theorem 1.1]{Wu}.  We also use the classical implication from
Bernstein--Sato roots to nearby-cycle monodromy
\cite{Kashiwara,Malgrange}.

\begin{theorem}
\label{thm:roots-give-monodromy}
Let $f$ be a nonzero regular function on a smooth complex algebraic variety.
If $\xi$ is a root of a local Bernstein--Sato polynomial $b_{f,x}(s)$, then
\[
 \exp(-2\pi\ii\xi)
\]
occurs as a local monodromy eigenvalue of $f$, equivalently as an eigenvalue
on its nearby-cycle complex.  Consequently, the same conclusion holds for
every root of the global Bernstein--Sato polynomial $b_f(s)$.
\end{theorem}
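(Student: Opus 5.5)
The statement is the classical Kashiwara--Malgrange theorem, so the plan is to reduce it to the nearby-cycle description of Theorem~\ref{thm:wu-preliminaries}, applied with $\cM=\cO_Y$ and $\cM_0=\cO_Y$. The idea is to show that a root $\xi$ of $b_{f,x}$ forces the germ at $x$ of $\Psi_{f,\beta}\cO_Y$ to be nonzero for $\beta=-\xi$. The comparison \eqref{eq:wu-rh-comparison} then makes $\psi_{f,\lambda}\C_Y$ nonzero near $x$, with
\[
\lambda=\exp(2\pi\ii\beta)=\exp(-2\pi\ii\xi).
\]

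First, I localize at $\mathfrak m_{\xi}$, i.e.\ at $-\beta$. If $\xi$ is a root of $b_{f,x}$, then the germ at $x$ of
\[
Q_0=\shD[s]f^s/\shD[s]f^{s+1}
\]
has nonzero localization at $\mathfrak m_\xi$. Indeed, $b_{f,x}$ is the minimal polynomial of $s$ acting on this germ. If the localization vanished, then the germ would be annihilated by a polynomial prime to $s-\xi$, because the quotient is finitely generated over $\shD[s]$ and so the annihilator ideal in $\C[s]$ is generated by $b_{f,x}$. That would contradict minimality.

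Second, the localized quotient $(Q_0)_{\mathfrak m_\xi}$ is one of the successive factors of the filtration
\[
\shD[s]_{\mathfrak m_{\xi}}f^{s-K}\supset\cdots\supset\shD[s]_{\mathfrak m_{\xi}}f^{s}\supset\shD[s]_{\mathfrak m_{\xi}}f^{s+1}\supset\cdots\supset\shD[s]_{\mathfrak m_{\xi}}f^{s+K}.
\]
By \eqref{eq:wu-nearby-quotient}, this filtration is a filtration of $\Psi_{f,\beta}\cO_Y$, where we take $K\ge1$ large. A nonzero subquotient forces $\Psi_{f,\beta}\cO_Y$ to have nonzero germ at $x$. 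Since $\cO_Y$ is regular holonomic, \eqref{eq:wu-rh-comparison} gives that the stalk cohomology of $\psi_{f,\lambda}\C_Y$ is nonzero at some point arbitrarily close to $x$, using the support of $\operatorname{DR}$. Hence $\lambda$ is an eigenvalue of the monodromy on the nearby-cycle complex near $x$, equivalently of the local Milnor monodromy at nearby points. If one insists on the point $x$ itself, the standard constructibility argument shows that eigenvalues at nearby points of $D$ also occur at $x$. Alternatively, one simply cites \cite{Kashiwara,Malgrange} for the pointwise version.

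For the global statement, I use that $b_f=\operatorname{lcm}_x b_{f,x}$ over the finitely many constructible strata. This holds because the global quotient is coherent and its annihilator is the intersection of the local annihilators. So every global root is a local root somewhere, and the first part applies.

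The main obstacle is the sign and normalization bookkeeping: checking that the localization $\mathfrak m_{-\beta}$ in Theorem~\ref{thm:wu-preliminaries} matches $\beta=-\xi$, and hence $\lambda=\exp(-2\pi\ii\xi)$. A secondary point is the passage from "nonzero near $x$" to "eigenvalue at $x$", which I would handle by citing the classical references rather than reproving it.
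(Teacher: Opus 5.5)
Your main argument is correct, but it takes a different route from the paper. The paper gives no proof of this statement. It cites the classical theorem \cite{Kashiwara,Malgrange}, checks that its sign convention agrees with \eqref{eq:wu-rh-comparison}, and deduces the global case from $b_f=\operatorname{lcm}_x b_{f,x}$, exactly as your last step does.

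You instead derive the local implication from Theorem~\ref{thm:wu-preliminaries}:
\begin{itemize}
\item your first step is the germwise form of \eqref{eq:quotient-detects-root} with $k=0$;
\item by \eqref{eq:wu-nearby-quotient}, the quotient $\cQ_0^{-\xi}$ is a subquotient of $\Psi_{f,-\xi}\cO_Y$;
\item \eqref{eq:wu-rh-comparison}, together with $\Supp\operatorname{DR}(\cM)=\Supp\cM$ for holonomic $\cM$, then puts $x$ in the support of $\psi_{f,\lambda}\C_Y$ with $\lambda=\exp(-2\pi\ii\xi)$.
\end{itemize}
What this buys is internal consistency. The sign comes out of the same normalization that Lemma~\ref{lem:one-jump} later uses, so no separate convention check is needed. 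The conclusion also appears in its correct general form: an eigenvalue on $\psi_f$ at points near $x$. It is not logically more elementary, because Wu's comparison theorem ultimately rests on the Kashiwara--Malgrange identification of nearby cycles with graded pieces of the $V$-filtration. Since Theorem~\ref{thm:wu-preliminaries} is algebraic, you should also state that the minimal polynomial of $s$ on the Zariski stalk at $x$ equals the analytic $b_{f,x}$. This follows from faithful flatness of $\cO^{\mathrm{an}}_{Y,x}$ over $\cO_{Y,x}$.

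You should drop the remark that eigenvalues occurring at nearby points of $D$ also occur at $x$ itself. It is false for non-isolated singularities, and the cited references give no such pointwise statement in general; a perverse sheaf can have zero stalk at a point of its support. Take $f=x^2y^3$ on $\C^2$:
\begin{itemize}
\item $b_{f,0}(s)=(s+\tfrac12)(s+\tfrac13)(s+\tfrac23)(s+1)^2$, and the root $-\tfrac12$ gives the eigenvalue $-1$;
\item this eigenvalue occurs at points $(0,y_0)$ with $y_0\neq0$, where $f$ is a unit times $x^2$;
\item the Milnor fiber at the origin is $\{x^2y^3=1\}\simeq\C^*$ via $(x,y)\mapsto xy$;
\item the monodromy flow $(x,y)\mapsto(\e^{-2\pi\ii\theta}x,\e^{2\pi\ii\theta}y)$ ends at the identity, so only the eigenvalue $1$ occurs at the origin.
\end{itemize}
The statement, as phrased and as used later in the paper, needs only an eigenvalue on the nearby-cycle complex, and your main argument supplies that.
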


The sign agrees with \eqref{eq:wu-rh-comparison}: if
$\xi\equiv-\beta\pmod{\Z}$, then
\[
 \exp(-2\pi\ii\xi)=\exp(2\pi\ii\beta).
\]
The global assertion follows because the global Bernstein--Sato polynomial
is the least common multiple of the local polynomials.

For the simplicity argument, in the case $\cM_0=\cO_Y$, set
\begin{equation}\label{eq:adjacent-quotient}
 \cQ_k^\beta
 :=
 \frac{\shD_Y[s]_{\mathfrak m_{-\beta}}f^{s-k}}
      {\shD_Y[s]_{\mathfrak m_{-\beta}}f^{s-k+1}},
 \qquad k\in\Z.
\end{equation}
Replacing the variable in
\eqref{eq:wus-b-quotient} by $s-k$ shows that $b_f(s-k)$ is
the minimal polynomial of $s$ on the corresponding unlocalized quotient.
Localizing at $\mathfrak m_{-\beta}$ therefore gives
\begin{equation}\label{eq:quotient-detects-root}
 \cQ_k^\beta\neq0
 \quad\Longleftrightarrow\quad
 b_f(-\beta-k)=0.
\end{equation}
Thus keeping the localization point $s=-\beta$ fixed and changing the exponent
from $f^s$ to $f^{s-k}$ records exactly the integral translates of a root.

\subsection{The secant stratification and its local product structure}

Let $C\simeq\P^1\subset\P^{2m-2}$ be the rational normal curve of degree
$2m-2$.  For $1\leq a\leq m-1$, let
$S_a=S_a(2m-2)$ denote the closure of the union of the projective
$(a-1)$-planes spanned by $a$ points of $C$, and write
$X_a=X_a(2m-2)$ for its affine cone.  We also set
\[
X_{-1}=\varnothing,
\qquad
X_0=\{0\},
\qquad
X_{m-1}=D_m.
\]
Thus $X_a$ is the affine locus of Hankel rank at most $a$.  In particular,
$S_{m-1}$ is the projective hypersurface defined by $f_m$, and its affine
cone is $X_{m-1}=D_m$.

We use the following consequences of Brogan's Lemma 3.1, Corollary 3.2, and
Theorems 3.17 and 3.22 \cite{Brogan}.  Here $\psi_{f,\lambda}$
denotes the generalized $\lambda$-eigenspace of the nearby-cycle functor.
The symbol $\IC(L)$ denotes the intersection complex, namely the perverse
intermediate extension of a local system $L$ from a dense smooth open set, and
$\Supp$ denotes support.  These conventions agree with
\cite[Section 3.1]{Brogan}.  Under the indexing $n=m-1$, the ambient
dimension $2n+1$ in that paper is $N=2m-1$ here, and the symbols $H_n$ and
$f=\det H_n$ there correspond to $H_m$ and $f_m$ here.  The cited results are
stated with rational coefficients; whenever a generalized
$\lambda$-eigenspace is written below, scalar extension from $\Q$ to $\C$
is understood.

\begin{theorem}[Brogan]\label{thm:brogan}
The following statements hold for $f_m$.
\begin{enumerate}[label=\textup{(\roman*)}]
\item\label{item:brogan-slice}
For $1\leq q\leq m$, at a general point of
$X_{m-q}\setminus X_{m-q-1}$, the transverse analytic
germ of $f_m$ is analytically equivalent, up to multiplication by a unit, to
$f_q$ in the transverse variables; the remaining coordinates are smooth
parameters.
\item\label{item:brogan-eigen}
The eigenvalues of the nearby-cycle monodromy are precisely roots of unity of
orders $q\in\{1,\ldots,m\}$.
\item\label{item:brogan-ic}
If $q>1$ and $\lambda$ is a primitive $q$-th root of unity, then the
$\lambda$-nearby cycles form an intersection complex
\[
\psi_{f_m,\lambda}\Q_{\C^N}[N]
\simeq \IC(L_\lambda),
\qquad
\Supp\IC(L_\lambda)=X_{m-q},
\]
where $L_\lambda$ is an irreducible rank-one local system on a dense smooth
open subset of the support.
\item\label{item:brogan-one}
For $\lambda=1$,
\[
\psi_{f_m,1}\Q_{\C^N}[N]
\simeq\Q_{X_{m-1}}[N-1]\simeq\IC_{X_{m-1}}.
\]
\end{enumerate}
\end{theorem}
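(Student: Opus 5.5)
This theorem is stated as a collection of consequences of \cite{Brogan}, so the plan is to derive each item from the cited results and translate indexing conventions ($n=m-1$, ambient dimension $2n+1=N$). No new geometric argument is needed.

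For \ref{item:brogan-slice} we use Brogan's Lemma~3.1 and Corollary~3.2, the local Hankel elimination. Near a point of rank exactly $a=m-q$, one uses the $SL_2$-action to move the point into a normal form. Here the relevant rank is the Hankel rank, i.e.\ membership in $X_a\setminus X_{a-1}$. In the normal form the leading $a\times a$ Hankel block is invertible. A Schur complement, followed by a change of coordinates, then writes
\[
f_m=u\cdot f_q(y_0,\ldots,y_{2q-2}),
\]
where $u$ is a unit and the remaining $2a$ coordinates are free parameters. The one point requiring care is that the Schur complement is again Hankel. This holds only after a nonlinear coordinate change along the stratum, which is exactly the content of Brogan's Lemma~3.1.

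For \ref{item:brogan-eigen}--\ref{item:brogan-one} we quote Theorems~3.17 and~3.22. These are proved through Bertram's log resolution, whose divisors $E_q$ have $\ord_{E_q}f_m=q$. By A'Campo-type formulas this gives eigenvalues that are $q$-th roots of unity. The decomposition theorem applied to the resolution, together with the support analysis, identifies each primitive-$\lambda$ summand with a single $\IC$ supported on $X_{m-q}$. For $\lambda=1$, the summand is the constant sheaf on the hypersurface $X_{m-1}$. One uses that $X_{m-1}$ is a rational homology manifold, so that $\Q_{X_{m-1}}[N-1]\simeq\IC_{X_{m-1}}$.

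The remaining steps are bookkeeping. First, extend scalars from $\Q$ to $\C$ and pass to generalized eigenspaces; this is compatible with the cited rational statements, since the eigenvalues are roots of unity. Second, check that the shift conventions $[N]$ and $[N-1]$ match \cite[Section 3.1]{Brogan}. The main obstacle is ensuring the indexing of strata is correct: a primitive $q$-th root of unity is supported on $X_{m-q}$, not on $X_{q}$. I would verify this in the case $m=2$. There $f_2=x_0x_2-x_1^2$ is a nondegenerate quadric in three variables, so the eigenvalue $-1$ should be supported at the vertex $X_0$. This matches the requirement that the support be $X_{m-q}$ with $q=2$.
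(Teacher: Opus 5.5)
Your proposal is correct and follows essentially the same route as the paper. The paper likewise treats the statement as a translation (with $n=m-1$) of Brogan's Lemma 3.1, Corollary 3.2 and Theorems 3.17 and 3.22: part (i) comes from the Hankel elimination on a chart reached via $SL_2(\C)$, part (iv) from the rational-homology-manifold property of the secant varieties, and the $\lambda$-eigenspaces are taken after extending scalars from $\Q$ to $\C$.

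Two minor precisions are worth making. First, the $SL_2(\C)$ step only carries an osculating hyperplane avoiding the point to the standard one, which places the point in Brogan's chart; the action is not transitive on the strata in general, so there is no single normal form. Second, Lemma 3.1 makes a unit multiple of $f_m$ equal to a smaller Hankel determinant in new coordinates, whereas the Schur complement matrix itself (entries $x_{i+j}-x_ix_j/x_0$ when $a=1$) is not Hankel for $q\geq3$ in any coordinates, so the claim should be made for the determinant rather than the matrix.
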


To pass from Theorem~\ref{thm:brogan} to $\shD$-modules, we use the regular
Riemann--Hilbert correspondence.  For a smooth complex algebraic variety, it
identifies regular holonomic
$\shD$-modules, via the de Rham functor with its usual normalization, with
perverse sheaves \cite[Theorem 7.2.1]{HTT}.  In particular, it preserves and
reflects simple objects:
a regular holonomic $\shD$-module is simple if and only if its de Rham complex
is a simple perverse sheaf.

By \cite[Theorem 3.17]{Brogan}, every secant variety in question is an
irreducible rational homology manifold.  Hence the shifted constant sheaf in
part~\ref{item:brogan-one} is its intersection complex.  For $q>1$, a
rank-one local system is irreducible, and its intermediate extension is a
simple perverse sheaf.  Thus parts~\ref{item:brogan-ic} and
~\ref{item:brogan-one} identify each nearby-cycle eigensummand with a simple
perverse sheaf.  Applying the regular Riemann--Hilbert correspondence, we
conclude that every monodromy eigensummand corresponds to a simple regular
holonomic $\shD$-module.

The local statement in part~\ref{item:brogan-slice} follows from the explicit
Hankel elimination in \cite[Lemma 3.1]{Brogan}.  On the chart used there, a
unit multiple of $f_m$ becomes a smaller Hankel determinant in the transverse
variables.  The $SL_2(\C)$-action on
$\operatorname{Sym}^{2m-2}(\C^2)$ preserves the rational normal curve and its
secant stratification.  Given a projective point, one may choose an osculating
hyperplane not containing it and then carry that hyperplane to the standard
one by $SL_2(\C)$.  This is the chart argument used in the proof of
\cite[Corollary 3.2]{Brogan}; it does not require the $SL_2$-action to be
transitive on a secant stratum.

\subsection{The unique integral translate}

Combining the lattice filtration with the simplicity of the nearby-cycle
summands in Theorem~\ref{thm:brogan} gives the following restriction on the
roots of the Hankel determinant.

\begin{lemma}
\label{lem:one-jump}
Fix $0<\beta\leq1$.  If $\Psi_{f,\beta}\cO_Y$ is a simple regular holonomic
$\shD_Y$-module, then among the numbers
\[
-\beta-k,\qquad k\in\Z,
\]
at most one is a root of $b_f(s)$.  Any such root has multiplicity one.
\end{lemma}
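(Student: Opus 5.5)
We work in the localized lattice filtration of Theorem~\ref{thm:wu-preliminaries} with $\cM=\cO_Y$ and $\cM_0=\cO_Y$, writing $M_k=\shD_Y[s]_{\mathfrak m_{-\beta}}f^{s-k}$ for $k\in\Z$. These are nested, $M_{k-1}\subset M_k$, and for $K$ large equation \eqref{eq:wu-nearby-quotient} identifies $\Psi_{f,\beta}\cO_Y=M_K/M_{-K}$. The module $M_K/M_{-K}$ has the finite filtration by the images of $M_k$, $-K\le k\le K$, and its successive factors are exactly the adjacent quotients $\cQ_k^\beta=M_k/M_{k-1}$ of \eqref{eq:adjacent-quotient}, for $-K<k\le K$. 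By \eqref{eq:quotient-detects-root}, $\cQ_k^\beta\neq0$ if and only if $-\beta-k$ is a root of $b_f$. Every root among the $-\beta-k$ has $|k|$ bounded, since $b_f$ has finitely many roots, so enlarging $K$ we may assume all relevant $k$ lie in $(-K,K]$.

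For uniqueness, observe that each image of $M_k$ in $M_K/M_{-K}$ is a $\shD_Y$-submodule; indeed it is even a $\shD_Y[s]$-submodule. Since $\Psi_{f,\beta}\cO_Y$ is simple, each such image is either $0$ or everything. The filtration is increasing in $k$, so there is a single index $k_0$ at which it jumps from $0$ to the whole module. Hence at most one factor $\cQ_k^\beta$ is nonzero, and at most one translate $-\beta-k$ is a root.

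For multiplicity, suppose $\cQ_{k_0}^\beta\neq0$. Then the image of $M_{k_0-1}$ vanishes while that of $M_{k_0}$ is everything, so $\cQ_{k_0}^\beta\cong\Psi_{f,\beta}\cO_Y$ as $\shD_Y[s]$-modules. On this module $s$ acts centrally, and by Theorem~\ref{thm:wu-preliminaries} some power of $s+\beta$ annihilates it. The endomorphism $s+\beta$ is therefore a nilpotent $\shD_Y$-linear endomorphism of a simple module. Its kernel is a nonzero submodule, hence everything, so $s+\beta$ acts by zero.

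It remains to convert this into the multiplicity of the root, and this is the step needing care. Consider the unlocalized quotient $Q_k=\shD_Y[s]f^{s-k}/\shD_Y[s]f^{s-k+1}$, on which $b_f(s-k)$ is the minimal polynomial of $s$. Its localization at $\mathfrak m_{-\beta}$ is $\cQ_k^\beta$, since localization is exact. The minimal polynomial of $s$ on that localization is the $(s+\beta)$-primary part of $b_f(s-k)$. This uses that $Q_k$ is a module over $\C[s]/(b_f(s-k))$, which splits into primary components by the Chinese remainder theorem. Since $s+\beta$ kills $\cQ_{k_0}^\beta$, the factor $(s+\beta)$ divides $b_f(s-k_0)$ exactly once. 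Equivalently, $-\beta-k_0$ is a simple root of $b_f$.
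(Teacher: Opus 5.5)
Your proof is correct and follows the paper's argument: the same filtration of $\Psi_{f,\beta}\cO_Y$ by the localized lattices, which can jump only once by simplicity, and then nilpotence of the central endomorphism $s+\beta$, which forces it to vanish on the unique nonzero adjacent quotient. The only differences are minor. You use the kernel of $s+\beta$ where the paper uses its image. You also spell out, via the primary (Chinese remainder) decomposition, why the minimal polynomial of $s$ on $\cQ_{k_0}^\beta$ is exactly the $(s+\beta)$-primary part of $b_f(s-k_0)$; the paper asserts this directly after noting that the cofactor $u(s)$ becomes a unit.
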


\begin{proof}
Choose $K$ so that Theorem~\ref{thm:wu-preliminaries} applies and every root
in the congruence class of $-\beta$ is represented by an
integer $k$ with $-K<k\leq K$.  The images of
\[
\shD_Y[s]_{\mathfrak m_{-\beta}}f^{s+K}
\subset\shD_Y[s]_{\mathfrak m_{-\beta}}f^{s+K-1}
\subset\cdots\subset
\shD_Y[s]_{\mathfrak m_{-\beta}}f^{s-K}
\]
in \eqref{eq:wu-nearby-quotient} form a filtration by $\shD_Y$-submodules.
Simplicity forces each term to be either $0$ or the whole nearby-cycle module.
Hence the filtration can jump only once.  Indeed, if two of its successive
quotients were nonzero, a filtration term after the first jump and before the
second would be a nonzero proper submodule.
By \eqref{eq:quotient-detects-root}, at most one integer translate can
therefore be a root.

Suppose that this root has multiplicity $d$.  The unique nonzero adjacent
quotient $\cQ_k^\beta$ is then the whole nearby-cycle module.  To relate the
root multiplicity to the action of $s$, recall that before localization the
minimal polynomial on the corresponding adjacent quotient is $b_f(s-k)$.
Since $-\beta-k$ is a root of $b_f$ of multiplicity $d$, one may write
\[
 b_f(s-k)=(s+\beta)^d u(s),
 \qquad u(-\beta)\neq0.
\]
After localization at $\mathfrak m_{-\beta}$, the factor $u(s)$ is a unit.
Thus the minimal polynomial of $s$ on $\cQ_k^\beta$ is precisely
$(s+\beta)^d$.  By
\cite[Proposition 2.12]{Wu}, the endomorphism $s+\beta$ is nilpotent.  If it
were nonzero, then $(s+\beta)\cQ_k^\beta$ would be a nonzero proper
$\shD_Y$-submodule: it is nonzero by assumption and proper because a
nilpotent endomorphism cannot be surjective on a nonzero module.  This again
contradicts simplicity.  Thus $s+\beta$ acts by zero, its minimal polynomial
is $s+\beta$, and $d=1$.
\end{proof}

\begin{coro}\label{cor:one-root-class}
For $f_m$, every monodromy congruence class contains at most one root of
$b_{f_m}(s)$, and every root is simple.
\end{coro}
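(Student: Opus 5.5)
The plan is to apply Lemma~\ref{lem:one-jump} to each monodromy class separately, taking $Y=\C^N$, $f=f_m$ and $\cM=\cO_Y$ with lattice $\cM_0=\cO_Y$. Any root $\xi$ of $b_{f_m}$ is a negative rational number \cite{Kashiwara}. So there is a unique $\beta\in(0,1]$ with $\xi\equiv-\beta\pmod{\Z}$, and the congruence class of $\xi$ is $\{-\beta-k:k\in\Z\}$. By Theorem~\ref{thm:roots-give-monodromy}, the number $\lambda=\exp(2\pi\ii\beta)=\exp(-2\pi\ii\xi)$ is a nearby-cycle monodromy eigenvalue of $f_m$. Part~\ref{item:brogan-eigen} of Theorem~\ref{thm:brogan} then says that $\lambda$ is a primitive $q$-th root of unity for some $1\leq q\leq m$. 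It therefore suffices to check that $\Psi_{f_m,\beta}\cO_Y$ is a simple regular holonomic $\shD_Y$-module. Lemma~\ref{lem:one-jump} then gives at most one root in the class of $-\beta$, and that root is simple. Classes whose eigenvalue does not occur contain no roots at all, by Theorem~\ref{thm:roots-give-monodromy}, so the claim holds for them trivially.

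For the simplicity I would use Theorem~\ref{thm:wu-preliminaries}. Since $\cO_Y$ is regular holonomic, \eqref{eq:wu-rh-comparison} gives
\[
\operatorname{DR}(\Psi_{f_m,\beta}\cO_Y)\simeq i_*\psi_{f_m,\lambda}\operatorname{DR}(\cO_Y)[-1].
\]
With the standard normalization $\operatorname{DR}(\cO_Y)=\C_Y[N]$, the right-hand side is, up to the shift convention, the perverse sheaf $\psi_{f_m,\lambda}\C_{\C^N}[N]$ of Theorem~\ref{thm:brogan}, after scalar extension from $\Q$ to $\C$. If $q>1$, part~\ref{item:brogan-ic} identifies it with $\IC(L_\lambda)$, where $L_\lambda$ has rank one and so is irreducible. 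If $q=1$, part~\ref{item:brogan-one} identifies it with $\IC_{X_{m-1}}$. In both cases it is a simple perverse sheaf, as noted after Theorem~\ref{thm:brogan}. The module $\Psi_{f_m,\beta}\cO_Y$ is regular holonomic, since nearby cycles of a regular holonomic module are regular. The regular Riemann--Hilbert correspondence \cite[Theorem 7.2.1]{HTT} reflects simple objects, so $\Psi_{f_m,\beta}\cO_Y$ is simple.

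The main obstacle is bookkeeping of normalizations rather than anything conceptual.

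\emph{Shifts.} The shift $[-1]$ in \eqref{eq:wu-rh-comparison} has to be matched against the perverse normalization $\psi_f[-1]$ that Brogan uses for $\psi_{f_m,\lambda}\Q[N]$. A shift mismatch would only move the object between degrees, so simplicity as a perverse sheaf is unaffected once we know the object is perverse. The comparison produces a single $\shD$-module, so it is perverse.

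\emph{Scalar extension.} Simplicity must survive passing from $\Q$ to $\C$. This holds because after tensoring with $\C$ the local system remains rank one, hence irreducible, and the intermediate extension of an irreducible local system is simple. The constant sheaf on the irreducible variety $X_{m-1}$ is likewise irreducible.

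\emph{Sign of $\lambda$.} The eigenvalue $\exp(2\pi\ii\beta)$ should be matched with the $\lambda$ of Brogan's theorem via the sign remark following Theorem~\ref{thm:roots-give-monodromy}. Since the set of orders is closed under $\lambda\mapsto\lambda^{-1}$, a sign slip would be harmless anyway.

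With these checks in place, the corollary follows directly from Lemma~\ref{lem:one-jump}.
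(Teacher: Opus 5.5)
Your proposal is correct and follows essentially the same route as the paper. The paper applies Theorem~\ref{thm:wu-preliminaries} with $\cM=\cM_0=\cO_Y$, uses \eqref{eq:wu-rh-comparison}, Theorem~\ref{thm:brogan}, and Riemann--Hilbert to get simplicity of each eigensummand, and then invokes Lemma~\ref{lem:one-jump}. You make explicit the bookkeeping (classes with no eigenvalue, shift, scalar extension, sign of $\lambda$) that the paper leaves implicit.
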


\begin{proof}
Apply Theorem~\ref{thm:wu-preliminaries} with $Y=\C^N$, $f=f_m$,
$\cM=\cO_Y$, $\cM_U=\cO_U$, and the coherent lattice $\cM_0=\cO_Y$.
$\cO_Y$ is regular holonomic, and nearby cycles preserve regular
holonomicity.  By \eqref{eq:wu-rh-comparison},
Theorem~\ref{thm:brogan}, and the regular Riemann--Hilbert correspondence,
each nearby-cycle eigensummand corresponds to a simple regular holonomic
$\shD_Y$-module.  The conclusion now follows from Lemma~\ref{lem:one-jump}.
\end{proof}

Here the class corresponding to an eigenvalue $\lambda$ means the set of
rational numbers $\xi$ satisfying
$\exp(-2\pi\ii\xi)=\lambda$.

\subsection{The secant-stratum log resolution and Lichtin's bound}

The zeta calculation determines a root only up to an integral shift.  Lichtin's
theorem removes this ambiguity once the numerical data of a log resolution are
known.  We obtain these data from the affine-cone lift of Bertram's projective
secant resolution, using Brogan's local Hankel reduction to compute the
transverse multiplicities.  Throughout this subsection $m\geq2$; the case
$m=1$ is smooth.

A log resolution of a pair
$(Y,D)$ is a proper birational morphism from a smooth variety which is an
isomorphism over $Y\setminus D$ and for which the strict transform of $D$
together with all exceptional divisors has simple normal crossings.

We use Lichtin's theorem in the following form.

\begin{theorem}[Lichtin]\label{thm:lichtin}
Let $Y$ be smooth, let $f$ be a nonzero reduced regular function, and let
$\mu:\wt Y\to Y$ be a log resolution such that
\[
 \mu^*(f=0)=\sum_i a_iE_i,
 \qquad
 K_{\wt Y/Y}=\sum_i b_iE_i,
\]
where the strict transform is included with $a_i=1$ and $b_i=0$.  Then every
root of $b_f(s)$ is of the form
\begin{equation}\label{eq:lichtin}
 -\frac{b_i+1+\ell}{a_i}
 \qquad(\ell\in\Z_{\geq0}).
\end{equation}
\end{theorem}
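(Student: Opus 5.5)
This is Lichtin's theorem, which the paper cites as \cite[Theorem 5]{Lichtin}, \cite[Theorem 3.5.1]{PopaNotes}. I would reproduce the standard argument: establish a functional equation upstairs on the log resolution, where $f$ is locally a monomial, and then push it down. The key tool is Kashiwara's comparison between $b_f$ and $b$-functions of the pulled-back divisor, together with the structure of $\mu^*f$ in simple normal crossing charts.

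First I would work locally on $\wt Y$. In a chart with coordinates $y_1,\dots,y_n$ one has $f\circ\mu=u\,y_1^{a_1}\cdots y_r^{a_r}$ with $u$ a unit, and the Jacobian is $\mu^*(dx)=v\,y_1^{b_1}\cdots y_r^{b_r}dy$. I would consider the $\shD_{\wt Y}[s]$-module generated by $(f\circ\mu)^s\,\omega$, where $\omega$ is the relative canonical section; equivalently, I work with right modules and the twisted symbol $(f\circ\mu)^s\,y^{b}$. For a monomial times the Jacobian factor, the explicit operator $\prod_i\partial_{y_i}^{a_i}$, after conjugating away the unit, yields a $b$-function
\[
\prod_i\prod_{c=1}^{a_i}\Bigl(s+\frac{b_i+c}{a_i}\Bigr).
\]
The roots of this polynomial are of the form $-(b_i+c)/a_i$ with $c\geq1$, that is, of the form \eqref{eq:lichtin} with $\ell=c-1\geq0$.

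Second, I would pass to the direct image. The integration/direct image $\mu_+$ of the right $\shD_{\wt Y}[s]$-module generated by $(f\circ\mu)^s\omega_{\wt Y}$ maps onto, or at least relates to, the module generated by $f^s\omega_Y$, since $\mu$ is an isomorphism off $D$ and $\mu_*\omega_{\wt Y}=\omega_Y$. Kashiwara's argument, via proper direct image and the coherence of $\mu_+$ of lattices, gives a divisibility
\[
b_f(s)\;\Big|\;\prod_{j=0}^{J} b_{\mathrm{up}}(s+j)
\]
for some $J\geq0$, where $b_{\mathrm{up}}$ denotes the upstairs $b$-function above. The shifts by $j\geq0$ only increase $\ell$, so every root of $b_f$ still has the form $-(b_i+1+\ell)/a_i$ with $\ell\in\Z_{\geq0}$. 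The strict transform, with $a_i=1$ and $b_i=0$, contributes $-1-\ell$.

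The main obstacle is the second step: making rigorous that pushing forward the upstairs functional equation produces a downstairs equation whose polynomial is a product of nonnegative shifts of $b_{\mathrm{up}}$. This requires control of the $\mu_+$ of the $s$-lattice, Kashiwara's proper direct image argument, and the fact that the shifts can be taken nonnegative. Since the paper treats this as a cited theorem, I would rely on \cite{Lichtin}, \cite[Theorem 3.5.1]{PopaNotes} for this step, and verify only that the hypotheses hold: $f$ is reduced, $\mu$ is a log resolution, and the strict transform is included with $a_i=1$ and $b_i=0$.
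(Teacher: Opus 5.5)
Your outline is correct and is the standard argument behind the result, which the paper does not reprove but simply cites (\cite[Theorem 5]{Lichtin}, \cite[Theorem 3.5.1]{PopaNotes}). The argument combines the Jacobian-twisted monomial $b$-function upstairs with Kashiwara's direct-image divisibility $b_f(s)\mid\prod_{j=0}^{J}b_{\mathrm{up}}(s+j)$, giving exactly the roots $-(b_i+c+ja_i)/a_i$ with $1\le c\le a_i$ and $j\ge0$. One point to state precisely: the upstairs module must be generated by $(f\circ\mu)^s\mu^*(dx)$, not by all of $(f\circ\mu)^s\omega_{\wt Y}$. Your local computation with $y^b$ already assumes this; with the untwisted generator you would only get the weaker bound $-(1+\ell)/a_i$.
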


This is \cite[Theorem 5]{Lichtin}; see also
\cite[Theorem 3.5.1]{PopaNotes}.  Thus we must compute the multiplicity $a_i$
and discrepancy coefficient $b_i$ of each divisor on a log resolution.  Here
$K_{\wt Y/Y}$ is the relative canonical divisor: locally, $b_i$ is the
order along $E_i$ of the Jacobian determinant of $\mu$.

Since $S_a=\P(X_a)\subset\P^{N-1}$, it is the secant variety
swept out by the linear spans of effective divisors of degree $a$ on the
rational normal curve $C\simeq\P^1$, and $S_{m-1}$ is the projective
hypersurface whose affine cone is $D_m=X_{m-1}$.
For the global resolution we use Bertram's theorem.

\begin{theorem}[Bertram's secant resolution]\label{thm:bertram-resolution}
For the rational normal curve $C\subset\P^{2m-2}$, successively blowing up
the strict transforms of
\[
 S_1\subset S_2\subset\cdots\subset S_{m-2}
\]
gives a log resolution of
$(\P^{2m-2},S_{m-1})$.  At each stage the center is smooth, the final
strict transform of $S_{m-1}$ is smooth, and it meets all exceptional
divisors with simple normal crossings.
\end{theorem}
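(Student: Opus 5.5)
This is a cited theorem (Bertram, in the form of Schnell--Yang), so rather than reprove it I would explain how its hypotheses and conclusions come out of the secant geometry already recalled, following \cite[Section 3]{SchnellYang}. I would argue by induction on the stage $a$ of the blow-up sequence. Let $Y_0=\P^{2m-2}$, and let $Y_a$ be obtained from $Y_{a-1}$ by blowing up the strict transform $\wt S_a$ of $S_a$. The inductive claim has three parts: $\wt S_a\subset Y_{a-1}$ is smooth; the strict transforms of all higher $S_b$ ($b>a$) meet the exceptional divisors $E_1,\dots,E_{a-1}$ transversally; and each fibre of $\wt S_b\cap E_a\to\wt S_a$ is isomorphic to the strict transform of a secant variety of a smaller rational normal curve, namely $S_{b-a}(2m-2-2a)$.

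The base step is that $S_1=C$ is smooth. For the inductive step I would use the local product structure already available. By Theorem~\ref{thm:brogan}\ref{item:brogan-slice} and the $SL_2(\C)$-chart argument following it, at every point of $S_a\setminus S_{a-1}$ the pair $(\P^{2m-2},S_b)$ is analytically a product of a smooth germ of dimension $2a-1$ with the affine cone over $S_{b-a}$ of the rational normal curve of degree $2(m-a)-2$. In particular:
\begin{itemize}
\item $S_a\setminus S_{a-1}$ is smooth of dimension $2a-1$;
\item the normal slice to $S_a$ is the cone over the smaller secant variety, so blowing up $S_a$ replaces this cone by the projective secant variety in the exceptional $\P^{2(m-a)-2}$-bundle.
\end{itemize}
Near $S_{a-1}$, smoothness of $\wt S_a$ follows because $\wt S_a$ is identified with a projective bundle over the already blown-up lower strata. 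Concretely, it is $\operatorname{Sym}^a$-type data: for degree-$a$ divisors $Z\subset\P^1$, the space of pairs $(Z,\text{point of the span }\langle Z\rangle)$ is a $\P^{a-1}$-bundle over $\operatorname{Sym}^a\P^1\simeq\P^a$, hence smooth. I would show this bundle maps isomorphically onto $\wt S_a$ after the first $a-1$ blow-ups.

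The last stage gives the statement. The strict transform of $S_{m-1}$ is smooth by the same bundle description, now a $\P^{m-2}$-bundle over $\P^{m-1}$. The divisors $E_1,\dots,E_{m-2}$ together with this strict transform have simple normal crossings because their pairwise intersections are again bundles of lower secant varieties, and these are transverse by induction.

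The main obstacle is the isomorphism between the projective bundle and the iterated strict transform, uniformly along the deeper strata. This is exactly where Bertram's argument is needed; I would rely on \cite{Bertram} and \cite[Section 3]{SchnellYang} for it, and verify only the transverse multiplicities from Brogan's Hankel elimination.
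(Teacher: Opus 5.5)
Citing \cite{Bertram} and \cite[Section 3]{SchnellYang} is also what the paper does. However, the sketch you attach to the citation rests on a false identification, at exactly the step you call the main obstacle. The $\P^{a-1}$-bundle $B_a=\{(Z,y): Z\in\operatorname{Sym}^a\P^1,\ y\in\langle Z\rangle\}$ agrees with the strict transform $\wt S_a\subset Y_{a-1}$ only for $a\leq 2$; for $a=2$ this is the classical fact that $\operatorname{Bl}_C S_2$ is a $\P^1$-bundle over $\operatorname{Sym}^2C$. For $a\geq 3$ it fails. Any $a+1\leq 2m-1$ points of $C$ are linearly independent, so $\langle Z\rangle\cap C=\operatorname{supp}Z$. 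Hence the preimage of $C=S_1$ in $B_a$ is the universal divisor $\{(Z,p): p\in\operatorname{supp}Z\}$, of dimension $a$ and codimension $a-1\geq 2$ in $B_a$. In $\wt S_a$, by contrast, the preimage of $C$ is the support of the pullback of the Cartier divisor $E_1$. It is nonempty because $\wt S_a\to S_a$ is surjective, so it has pure codimension one; its fibers over $C$ have dimension $2a-3$, consistent with your own third inductive claim. So there is no isomorphism $B_a\simeq\wt S_a$ over $S_a$. By the universal property of blowing up, $B_a\to\P^{2m-2}$ does not even lift to $Y_1=\operatorname{Bl}_C\P^{2m-2}$. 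In particular, for $m\geq4$ the final strict transform of $S_{m-1}$ is not a $\P^{m-2}$-bundle over $\P^{m-1}$. Your arguments for its smoothness and for simple normal crossings therefore have no basis, and the references do not prove the isomorphism you propose to take from them.

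What the cited results actually use is an inductive fiber structure like your third claim. Over $\wt S_a$, the strict transforms of the higher $S_b$ look fiberwise like the analogous tower for the rational normal curve of degree $2m-2-2a$. Smoothness of the centers, smoothness of the last strict transform, and the normal-crossing property come from that structure, not from a global bundle over a symmetric product. The paper does not reprove any of this. It cites \cite[Propositions 2.2--2.3 and Corollary 2.4]{Bertram} and \cite[Proposition 3.2, Remark 3.3, Corollary 3.4]{SchnellYang} and checks only their hypothesis. With $d=2m-2$ and $k=m-2$, the line bundle $\cO_{\P^1}(2m-2)$ must separate $2k+2=2m-2$ points, which holds because it separates $2m-1$ points. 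That check is the one thing a citation-based proof has to supply, and your proposal omits it. The transverse multiplicities from Brogan's elimination, which you propose to verify instead, are not part of this statement; they enter only in the later discrepancy computation.
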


This is a consequence of Bertram's construction; see
\cite[Propositions 2.2--2.3 and Corollary 2.4]{Bertram}.  A detailed
formulation, including the simple-normal-crossing property, is given in
\cite[Proposition 3.2, Remark 3.3, and Corollary 3.4]{SchnellYang}.  In the
notation of the latter reference, take $d=2m-2$ and $k=m-2$.  The required
positivity hypothesis holds because $\cO_{\P^1}(d)$ separates
$d+1=2m-1$ points, and hence in particular $2k+2=2m-2$ points.

We start with the blow-up of the origin in $\C^N$.  On the standard chart
in which the projective direction labelled by $x_0$ is nonzero, put
\begin{equation}\label{eq:vertex-blowup-chart}
 t=x_0,
 \qquad y_j=\frac{x_j}{x_0}\quad(1\leq j\leq N-1),
 \qquad y_0=1.
\end{equation}
The blow-down map on this chart is
\[
 (t,y_1,\ldots,y_{N-1})
 \longmapsto
 (t,ty_1,\ldots,ty_{N-1}).
\]
The phrase ``$x_0\neq0$ chart'' refers to the corresponding projective
direction; the pullback coordinate $t=x_0$ is allowed to vanish, and
$E_m$ is precisely $(t=0)$.  Since $f_m$ is homogeneous of degree $m$,
\begin{equation}\label{eq:vertex-total-transform}
 \begin{aligned}
 f_m(t,ty_1,\ldots,ty_{N-1})
 &=t^m F_m(y),\\
 F_m(y)&=\det(y_{i+j})_{0\leq i,j\leq m-1},\qquad y_0=1.
 \end{aligned}
\end{equation}
Thus the exceptional divisor occurs in the total transform with multiplicity
$m$, while $F_m=0$ is the strict transform.  Its intersection with $E_m$ is
the $x_0\neq0$ affine part of the projective secant hypersurface.
To compute the discrepancy, differentiate the blow-down map:
\[
 \dd x_0=\dd t,
 \qquad
 \dd x_j=y_j\dd t+t\dd y_j
 \quad(1\leq j\leq N-1).
\]
In the wedge product, every term containing $y_j\dd t$ vanishes because
the first factor is already $\dd t$.  Hence
\begin{align}
 \dd x_0\wedge\cdots\wedge\dd x_{N-1}
 &=\dd t\wedge
   \bigwedge_{j=1}^{N-1}(y_j\dd t+t\dd y_j)\notag\\
 &=t^{N-1}\dd t\wedge\dd y_1\wedge\cdots
   \wedge\dd y_{N-1}.
   \label{eq:vertex-jacobian}
\end{align}
Consequently
\[
 \ord_{E_m}(f_m)=m,
 \qquad
 \ord_{E_m}(K_{\operatorname{Bl}_0\C^N/\C^N})=N-1=2m-2.
\]

Although \eqref{eq:vertex-total-transform} separates the radial factor $t^m$,
it does not identify $F_m$ with $f_{m-1}$.  The required local identification
is due to Brogan.

\begin{prop}[Brogan]
\label{prop:brogan-first-blowup}
For every point $p\in S_{m-1}$ there is a Zariski neighborhood
$U_p\subset S_{m-1}$ of $p$ and an isomorphism of hypersurface germs
\begin{equation}\label{eq:brogan-projective-local-product}
 (U_p,p)\simeq
 \bigl(\A^1\times D_{m-1},(a,p')\bigr).
\end{equation}
Here the left-hand side denotes the germ of the hypersurface $S_{m-1}$ at
$p$, not the germ of the ambient projective space.
Consequently, the strict transform of $D_m$ after blowing up the origin
has the local form
\begin{equation}\label{eq:brogan-affine-blowup-local-product}
 \wt D_m\simeq
 \A^1_t\times\A^1\times D_{m-1}.
\end{equation}
In suitable ambient coordinates, its defining equation is $f_{m-1}$,
independent of the two smooth coordinates.
\end{prop}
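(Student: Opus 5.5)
The plan is to reduce to a single standard chart using the $SL_2(\C)$-action, and then to write down an explicit triangular polynomial change of coordinates on that chart that turns the dehomogenized determinant into a smaller Hankel determinant. This is the content of Brogan's Hankel elimination \cite[Lemma 3.1, Corollary 3.2]{Brogan}. I would make it explicit through the classical continued-fraction shift identity for Hankel determinants.

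\emph{Step 1 (reduction to the chart $x_0\neq0$).} Let $p\in S_{m-1}$. The osculating hyperplanes of $C$ have empty common intersection, so some osculating hyperplane $\Lambda$ of order $2m-2$ avoids $p$. The group $SL_2(\C)$ acts transitively on $C$ and preserves $S_{m-1}$, so it carries $\Lambda$ to the standard one $\{x_0=0\}$, as in the chart argument recalled after Theorem~\ref{thm:brogan}. Hence we may assume $p$ lies in the affine chart $\{x_0\neq0\}$. On that chart we normalize $y_0=1$ and $y_j=x_j/x_0$, so $S_{m-1}$ becomes $(F_m=0)$ with $F_m=\det(y_{i+j})$ as in \eqref{eq:vertex-total-transform}.

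\emph{Step 2 (the shift identity).} Encode the sequence as the power series $c(z)=\sum_{k\geq0}y_kz^k$ with $y_0=1$. Write
\[
 c(z)=\frac{1}{1-az-z^2d(z)},\qquad d(z)=\sum_{k\geq0}d_kz^k .
\]
Expanding shows two facts. First, $a=y_1$. Second, for each $k$, $d_k=y_{k+2}+(\text{a polynomial in }y_1,\ldots,y_{k+1})$. Consequently the map
\[
 (y_1,\ldots,y_{2m-2})\longmapsto(a,d_0,\ldots,d_{2m-4})
\]
is a triangular polynomial automorphism of $\A^{2m-2}$ onto $\A^1\times\A^{2m-3}$, and the truncation to the relevant indices is consistent. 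The key claim is the classical identity
\[
 \det(y_{i+j})_{0\leq i,j\leq m-1}=\det(d_{i+j})_{0\leq i,j\leq m-2}.
\]
I would prove it via the orthogonal-polynomial (Jacobi) LU-decomposition. Take the Schur complement of the $(0,0)$ entry; after the unipotent change of basis induced by multiplication by $1-az$, this complement becomes the Hankel matrix of the $d_k$. This step is where the Hankel structure must be checked, and it is the main obstacle. If a direct verification is messy, I would argue by Zariski density: on the open set where all leading principal minors are nonzero, both sides are the product of the same J-fraction coefficients $\prod\beta_i^{m-i}$, so the identity holds there and hence everywhere.

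\emph{Step 3 (conclusion).} Steps 1 and 2 give an isomorphism from the chart $\{x_0\neq0\}$ onto $\A^1_a\times\A^{2m-3}$ carrying $S_{m-1}$ to $\A^1\times D_{m-1}$; this is \eqref{eq:brogan-projective-local-product}. For the affine statement, the strict transform $\wt D_m$ on the blow-up chart \eqref{eq:vertex-blowup-chart} is $(F_m(y)=0)$ with $t$ a free coordinate, by \eqref{eq:vertex-total-transform}. Applying the coordinate change of Step 2 in the $y$-variables turns its equation into $f_{m-1}(d)$, independent of $t$ and $a$; this is \eqref{eq:brogan-affine-blowup-local-product}. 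The other charts of $\operatorname{Bl}_0\C^N$ are covered by $SL_2(\C)$-translates of this one, by Step 1, and the construction is equivariant, so no further verification is needed.
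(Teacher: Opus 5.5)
Your proposal is correct and follows the paper's route. Both move $p$ into the chart $x_0\neq0$ using $SL_2(\C)$, apply a triangular change of coordinates there that turns the dehomogenized $f_m$ into $f_{m-1}$ with one free coordinate, and then adjoin the fiber coordinate $t$. The paper cites \cite[Lemma 3.1, Corollary 3.2]{Brogan} for the chart computation, whereas you derive it yourself from the J-fraction identity $\det H_m(y)=\det H_{m-1}(d)$.

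One small correction to the Schur-complement variant, which your Zariski-density fallback already covers. The generating-function identity $\sum_{i,j\geq1}(y_{i+j}-y_iy_j)z^iw^j=zc(z)\,wc(w)\sum_{p,q\geq0}d_{p+q}z^pw^q$ shows that the Schur complement is congruent to $H_{m-1}(d)$ via the unitriangular Toeplitz matrix of $1/c(z)=1-az-z^2d(z)$, not of $1-az$ alone. The two agree only for $m\leq3$.
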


\begin{proof}
The projective statement is proved in \cite[Corollary 3.2]{Brogan}.  With
$n=m-1$, the calculation on the chart $x_0\neq0$ is the case $k=0$ of
\cite[Lemma 3.1]{Brogan}.  It gives a triangular change
of coordinates for which the dehomogenized equation is $f_{m-1}$ and one
coordinate is absent.  Corollary~3.2 uses the $SL_2(\C)$-action to cover every
projective point by such a chart.  Adding the fiber coordinate $t$ from
\eqref{eq:vertex-blowup-chart} gives
\eqref{eq:brogan-affine-blowup-local-product}.
\end{proof}

Brogan's proposition supplies the local recursive model for the centers
appearing in the affine-cone resolution.
On the chart just described, the pair consisting of the ambient space and the
strict transform is a smooth two-dimensional factor times the pair for
$f_{m-1}$.  The next center is therefore the smooth factor times the vertex of
$f_{m-1}$.  Since blow-up commutes with smooth base change, its local model is
the blow-up of that smaller vertex, with the smooth coordinates unchanged.
Repeating gives the chain
\[
 f_m\rightsquigarrow f_{m-1}\rightsquigarrow f_{m-2}
 \rightsquigarrow\cdots.
\]
For $m=2$ the smaller determinant is linear, so the recursion stops.

\begin{lemma}
\label{lem:affine-secant-resolution}
The affine-cone counterpart of Theorem~\ref{thm:bertram-resolution} is obtained
as follows: first blow up the origin and then successively blow up the strict
transforms of
\[
 X_1\subset X_2\subset\cdots\subset X_{m-2}.
\]
The resulting morphism is a log resolution of $(\C^N,D_m)$.  After the initial
blow-up, the remaining stages are obtained from Bertram's projective resolution
by smooth base change along the projection of the tautological line bundle.
When $m=2$, the displayed sequence of further centers is empty.
\end{lemma}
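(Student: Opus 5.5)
The plan is to identify the blow-up $\operatorname{Bl}_0\C^N$ with the total space of the tautological line bundle $\pi:\cO_{\P^{N-1}}(-1)\to\P^{N-1}$, with $E_m$ the zero section.

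Since $f_m$ is homogeneous, the strict transform $\wt D_m$ is $\pi^{-1}(S_{m-1})$. Likewise, for $1\le a\le m-2$ the strict transform of the cone $X_a$ is $\pi^{-1}(S_a)$. Indeed, $X_a\setminus\{0\}$ is a $\C^\times$-bundle over $S_a$. Its closure in the line bundle is therefore the full preimage $\pi^{-1}(S_a)$, because each fiber line through a point of $S_a$ lies in $X_a$.

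Since $\pi$ is smooth, blow-up commutes with this base change: $\operatorname{Bl}_{\pi^{-1}Z}(\pi^{-1}W)=\pi^{-1}\operatorname{Bl}_ZW$. Inductively, after blowing up the strict transforms of $X_1,\dots,X_{j}$, the space is the pullback $\wt L_j=\pi_j^{-1}$ of the line bundle over the $j$-th stage $P_j$ of Bertram's resolution. Over $P_j$, the strict transform of $X_{j+1}$ is the preimage of the strict transform of $S_{j+1}$. This gives the base-change description asserted in the lemma. The remaining point for this step is that the centers are taken in the correct order, which is exactly Bertram's order, and that they are smooth by Theorem~\ref{thm:bertram-resolution}.

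It remains to verify that the final map $\mu:\wt Y_m\to\C^N$ is a log resolution. Properness and birationality hold because $\mu$ is a composite of blow-ups. It is an isomorphism off $D_m$, since all centers lie in $D_m$; here we use $X_a\subset X_{m-1}$ and $0\in D_m$.

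For the normal crossing condition, pull back the line bundle to the final stage $P_{m-2}$ of Bertram's resolution. Then $\wt Y_m$ is the total space of a line bundle $\wt\pi:\wt Y_m\to P_{m-2}$. The relevant divisors are of two kinds:
\begin{itemize}
\item the zero section, which is the strict transform of $E_m$ (the later centers are preimages, so $E_m$ is never blown up and is only pulled back to the zero section);
\item the preimages $\wt\pi^{-1}(E_q')$ of Bertram's exceptional divisors together with $\wt\pi^{-1}(\wt S_{m-1})$, the preimage of the final strict transform of $S_{m-1}$.
\end{itemize}
By Theorem~\ref{thm:bertram-resolution}, the divisors $E_q'$ and $\wt S_{m-1}$ are smooth and form a simple normal crossing divisor on $P_{m-2}$. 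Their preimages under the smooth map $\wt\pi$ are therefore smooth with simple normal crossings. The zero section is transverse to every fiber, so locally we obtain coordinates $(t,z)$ with $t$ the fiber coordinate, in which the total divisor is $t\cdot\prod z_i^{e_i}$. This is simple normal crossing.

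Finally, the support of $\mu^*(D_m)$ is contained in this union. This holds because $\mu^{-1}(D_m)$ is the union of the zero section and the preimage of Bertram's total transform of $S_{m-1}$; the latter is supported on the $E_q'$ and $\wt S_{m-1}$.

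When $m=2$ there are no further centers, and $S_1$ is a smooth conic. In that case $\operatorname{Bl}_0$ alone already gives a simple normal crossing divisor by the same argument.

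The main obstacle I expect is the identification of strict transforms with preimages at each intermediate stage. Concretely, one must show that the strict transform of $X_{j+1}$ in $\wt L_j$ contains no extra component over the zero section. I would argue that the strict transform is the closure of the preimage over $\C^N\setminus\{0\}$, which is $\C^\times$-stable. Its closure in $\wt L_j$ is then the full line-bundle preimage of the closure in $P_j$, and the latter is the strict transform of $S_{j+1}$.

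Brogan's Proposition~\ref{prop:brogan-first-blowup} serves as a local cross-check that the models are products, but it is not logically needed.
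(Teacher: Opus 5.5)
Your proposal is correct and follows essentially the same route as the paper. Both arguments identify $\operatorname{Bl}_0\C^N$ with the total space of $\cO_{\P^{N-1}}(-1)$, realize each later center as the preimage of Bertram's center via smooth base change, and check simple normal crossings in local coordinates $(t,z)$, with the zero section given by $t=0$.

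The intermediate strict-transform identification you flag as the main obstacle already follows from flat base change, since strict transforms are themselves blow-ups. Your closure argument also works, provided you take the closure of the preimage of $X_{j+1}\setminus X_j$. The preimage of $X_{j+1}\setminus\{0\}$ would instead sweep in the earlier exceptional divisors.
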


\begin{proof}
The blow-up of the origin and the strict transform of the affine cone are
\[
 \operatorname{Bl}_0\C^N
 \simeq\operatorname{Tot}\bigl(\cO_{\P^{N-1}}(-1)\bigr),
 \qquad
 \wt D_m
 \simeq\operatorname{Tot}\bigl(\cO_{S_{m-1}}(-1)\bigr).
\]
Here $\operatorname{Tot}$ denotes the total space of a line bundle.  If
\[
 q_0:\operatorname{Tot}\bigl(\cO_{\P^{N-1}}(-1)\bigr)
 \longrightarrow\P^{N-1}
\]
is its projection, then the strict transform of the cone $X_a$ is
$q_0^{-1}(S_a)=\operatorname{Tot}(\cO_{S_a}(-1))$.  In particular, the
first center after the vertex blow-up is the inverse image of
$S_1\simeq\P^1$.

The morphism $q_0$ is smooth, and blow-up commutes with smooth base change.
Pulling back each stage of the projective sequence in
Theorem~\ref{thm:bertram-resolution} therefore gives the blow-up of
the corresponding strict transform of $X_a$.  Inductively, every resulting
ambient space is the total space of the pullback of
$\cO_{\P^{N-1}}(-1)$ to the corresponding stage of Bertram's resolution.

The final projective ambient space is smooth, and the strict transform of
$S_{m-1}$ together with the projective exceptional divisors has simple normal
crossings.  These properties persist under smooth base change.  It remains
only to include the exceptional divisor of the initial vertex blow-up, which
is the zero section of the pulled-back line bundle.  In a local
trivialization the total space is $V\times\A^1$, the pulled-back divisors come
from $V$, and the zero section is given by the fiber coordinate $t=0$.
Consequently it meets every pulled-back component transversely, so the full
total transform is again a simple-normal-crossing divisor.  Finally, all
centers lie over $D_m$, and the composition of the blow-ups is proper and is
an isomorphism over $\C^N\setminus D_m$.  Hence the affine sequence is a log
resolution.
\end{proof}

Denote this affine-cone resolution by
\[
 \mu:\wt Y_m\longrightarrow\C^N.
\]
For $2\leq q\leq m$, let $E_q$ be the divisor created from
$X_{m-q}$, and put $E_1=\wt D_m$.  At a general point of
$X_{m-q}\setminus X_{m-q-1}$, the earlier blow-ups are
isomorphisms, and Theorem~\ref{thm:brogan}\ref{item:brogan-slice} identifies
the normal pair with the vertex pair for $f_q$.  The normal space has dimension
\[
 c_q=N-2(m-q)=2q-1.
\]
The calculation
\eqref{eq:vertex-total-transform}--\eqref{eq:vertex-jacobian}, now with
degree $q$ and ambient dimension $c_q$, therefore applies in the normal
directions and gives
\[
 a_q:=\ord_{E_q}(f_m)=q,
 \qquad
 b_q:=\ord_{E_q}(K_{\wt Y_m/\C^N})=c_q-1=2q-2.
\]
Because these are divisorial orders, their values at the general point
determine them globally.  For the strict transform, $a_1=1$ and $b_1=0$.
Thus
\begin{equation}\label{eq:resolution-data}
\mu^*D_m=\sum_{q=1}^m qE_q,
\qquad
K_{\wt Y_m/\C^N}=\sum_{q=2}^m(2q-2)E_q.
\end{equation}

Applying Theorem~\ref{thm:lichtin} to \eqref{eq:resolution-data} yields the
following resolution bound.

\begin{coro}
\label{cor:resolution-bound}
Every root of the global polynomial $b_{f_m}(s)$, and every root of the local
polynomial $b_{f_m,0}(s)$ at the vertex, belongs to
\begin{equation}\label{eq:resolution-candidates}
 \left\{-\frac{2q-1+\ell}{q}:
 1\leq q\leq m,\ \ell\in\Z_{\geq0}\right\}.
\end{equation}
Moreover, if $-\alpha$ is such a root and its monodromy eigenvalue has exact
order $m$, then
\begin{equation}\label{eq:N-over-m}
 \alpha\geq\frac Nm=2-\frac1m.
\end{equation}
\end{coro}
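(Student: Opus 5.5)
The first assertion is a direct application of Theorem~\ref{thm:lichtin} to the log resolution $\mu:\wt Y_m\to\C^N$ of Lemma~\ref{lem:affine-secant-resolution}, with numerical data \eqref{eq:resolution-data}.

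\emph{Hypotheses.} Lichtin's theorem needs $f_m$ to be reduced. For $m\geq2$ this holds because $D_m$ is the affine cone over the irreducible hypersurface $S_{m-1}$, and Brogan's local model \eqref{eq:brogan-affine-blowup-local-product} shows $f_m$ vanishes to order one along the smooth locus. Equivalently, $\mu^*D_m$ has coefficient $a_1=1$ on the strict transform. The case $m=1$ is trivial, since then $b_{f_1}=s+1$.

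\emph{The global and local sets.} Substituting $a_q=q$, $b_q=2q-2$ into \eqref{eq:lichtin} gives exactly the set \eqref{eq:resolution-candidates}; the term $q=1$ contributes $-(1+\ell)$. For the local polynomial at the vertex there are two routes. One is to note that $b_{f_m,0}$ divides $b_{f_m}$, because the global polynomial is the least common multiple of the local ones. The other is to apply Lichtin's theorem to the restriction of $\mu$ over a neighborhood of $0$, which is still a log resolution with the same or fewer divisors. Either route gives the same candidate set.

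\emph{The exact-order-$m$ bound.} Let $-\alpha=-(2q-1+\ell)/q$ be a root whose eigenvalue $\exp(2\pi\ii\alpha)$ has exact order $m$. Then the reduced denominator of $\alpha$ is exactly $m$. Since $\alpha$ has the form $(2q-1+\ell)/q$, its reduced denominator divides $q$, so $m\mid q$. Combined with $q\leq m$, this forces $q=m$. Then
\[
\alpha=\frac{2m-1+\ell}{m}\geq\frac{2m-1}{m}=\frac Nm=2-\frac1m.
\]

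\emph{Main obstacle.} The only delicate step is that the divisorial data \eqref{eq:resolution-data} are correct and that no other exceptional divisors appear. Both are guaranteed by Lemma~\ref{lem:affine-secant-resolution}: the blow-ups are indexed exactly by the centers $X_0,\ldots,X_{m-2}$, each producing one divisor $E_q$, and each center is irreducible, being a cone over an irreducible secant variety. Granting this, the argument is pure arithmetic.
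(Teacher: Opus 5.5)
Your proof is correct and follows the paper's argument: you apply Lichtin's theorem to the affine secant resolution with the data \eqref{eq:resolution-data}, and you use the reduced-denominator argument to force $q=m$. Your two small variations are both valid: you get the local statement from $b_{f_m,0}\mid b_{f_m}$ instead of restricting $\mu$ near the origin, and you justify reducedness by order-one vanishing along the smooth locus (e.g.\ Theorem~\ref{thm:brogan}\ref{item:brogan-slice} with $q=1$), which is slightly more careful than the paper's appeal to irreducibility of $D_m$ alone.
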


\begin{proof}
Lemma~\ref{lem:affine-secant-resolution} shows that $\mu$ is a log resolution.
Moreover, $D_m$ is the irreducible secant hypersurface in
Theorem~\ref{thm:brogan}; hence its
defining polynomial $f_m$ is reduced.  Thus every hypothesis of
Theorem~\ref{thm:lichtin} is satisfied.  For $E_q$, equations
\eqref{eq:resolution-data} give
$a_q=q$ and $b_q=2q-2$.  Substitution in \eqref{eq:lichtin} gives
\eqref{eq:resolution-candidates}.  Restricting $\mu$ over a neighborhood of
the origin is again a log resolution, so the same argument gives the stated
local inclusion.

Finally, if the eigenvalue of $-\alpha$ has exact order $m$, then $\alpha$
has reduced denominator $m$.  A number represented by the $q$-th set in
\eqref{eq:resolution-candidates} has reduced denominator dividing $q$.
Since $q\leq m$, this forces $q=m$, and hence
\[
 \alpha=\frac{2m-1+\ell}{m}\geq\frac{2m-1}{m}
 =\frac Nm=2-\frac1m.
\]
\end{proof}

Corollary~\ref{cor:resolution-bound} only bounds the possible roots; a log
resolution does not show that the corresponding residues are nonzero.  That
nonvanishing will come from the zeta integral.  Homogeneity will then identify
the local polynomial at the vertex with the global polynomial.

Corollaries~\ref{cor:one-root-class} and~\ref{cor:resolution-bound}, together
with the low-rank calculations in Section~\ref{sec:checks}, suggest the product
in \eqref{eq:main-formula}: one simple root in each primitive monodromy class,
at the translate selected by the resolution bound.  What remains is to prove
that these candidate roots exist.  Sections~\ref{sec:cone} and~\ref{sec:zeta}
construct and evaluate a branch zeta integral with nonvanishing primitive
residues, and Section~\ref{sec:poles-roots} fixes their exact translates.

\section{The positive Hankel cone and Gauss--Radau coordinates}
\label{sec:cone}

Throughout this section $m\geq2$.  Set
\begin{equation}\label{eq:r-def}
r=m-1,
\qquad\text{so that}\qquad N=2r+1.
\end{equation}

\subsection{Truncated moments and a coercive linear form}

The entries of a Hankel matrix depend only on the sum of their indices.  This
is the pattern of a moment Gram matrix: the $(p,q)$ entry is the value
of a linear functional on $u^{p+q}$.  Introducing that functional converts
matrix positivity into positivity of an inner product and makes spectral
theory available.

For $x=(x_0,\ldots,x_{2r})\in\R^{2r+1}$, define a linear functional on
polynomials of degree at most $2r$ by
\begin{equation}\label{eq:Lx}
L_x(u^j)=x_j.
\end{equation}
Here $u$ is a formal indeterminate in the polynomial ring $\R[u]$.  By
contrast, the symbols $u_a$ introduced below denote real nodes; together with
the weights $w_a$, they serve as coordinates in the subsequent change of
variables.
On the space $\R[u]_{\leq r}$, the bilinear form
\begin{equation}\label{eq:moment-inner}
\langle p,q\rangle_x=L_x(pq)
\end{equation}
has Gram matrix $H_m(x)$ in the monomial basis.  Consequently
\begin{equation}\label{eq:omega}
\Omega_m=\{x\in\R^{2r+1}:H_m(x)>0\}
\end{equation}
is an open convex cone.  It is a connected component of $(f_m>0)$: inertia is
locally constant on the space of nonsingular real symmetric matrices, and the
component containing a positive definite matrix consists precisely of positive
definite matrices.  Its closure is the positive semidefinite Hankel cone.  Indeed,
one inclusion follows by continuity; conversely, adding a positive multiple of
any fixed positive definite Hankel matrix approximates every positive semidefinite
Hankel matrix by points of $\Omega_m$.

To turn the cone integral into a radial Mellin integral, we need a linear
functional that is positive on every nonzero boundary ray.  The
$SO(2)\subset SL_2(\R)$-invariant binary form $(X^2+Y^2)^r$ suggests the
following choice after setting $u=X/Y$:
\begin{equation}\label{eq:ell}
\ell_m(x)=\sum_{j=0}^{r}\binom rj x_{2j}
=L_x((1+u^2)^r).
\end{equation}

\begin{lemma}\label{lem:ell-coercive}
The functional $\ell_m$ is strictly positive on
$\wb\Omega_m\setminus\{0\}$.  Moreover, there is a constant $C_m>0$
such that
\begin{equation}\label{eq:ell-norm}
\|x\|\leq C_m\ell_m(x)
\qquad(x\in\wb\Omega_m).
\end{equation}
In particular,
$\wb\Omega_m\cap\{\ell_m\leq R\}$ is compact for every $R>0$.
\end{lemma}

\begin{proof}
Recall that a real symmetric matrix is positive semidefinite if and only if
all of its principal minors are nonnegative.  The principal minors of order
one therefore give
\[
H_{aa}=x_{2a}\geq0,\qquad 0\leq a\leq r.
\]
Every coefficient in \eqref{eq:ell} is positive.  Hence
$\ell_m(x)=0$ implies $H_{aa}=0$ for every $a$.  For $a\neq b$, the principal
minor on rows and columns $a,b$ now satisfies
\[
0\leq
\det\begin{pmatrix}H_{aa}&H_{ab}\\ H_{ab}&H_{bb}\end{pmatrix}
=H_{aa}H_{bb}-H_{ab}^2=-H_{ab}^2.
\]
Thus $H_{ab}=0$ as well.  Every entry of $H_m(x)$ vanishes, and since the
entries of a Hankel matrix contain all the coordinates
$x_0,\ldots,x_{2r}$, it follows that $x=0$.  Thus $\ell_m$ is positive on the
compact set
$\wb\Omega_m\cap\{\|x\|=1\}$.  Its positive minimum on that set gives
\eqref{eq:ell-norm} by homogeneity.
\end{proof}

The choice $(1+u^2)^r$ in \eqref{eq:ell} serves both parts of the argument.
Its positivity gives Lemma~\ref{lem:ell-coercive}; under the later substitution
$u=\tan(\theta/2)$, it cancels all factors involving
$|1+\e^{\ii\theta}|$ and leaves a Morris integral.

\subsection{Construction of nodes and weights}

We construct real nodes and positive weights, with one node fixed at zero,
directly from the positive definite inner product \eqref{eq:moment-inner}.
No quadrature or measure representation is assumed.  The construction is
unique away from a proper algebraic subset.

\begin{theorem}\label{thm:radau}
Outside a real algebraic subset of measure zero, every $x\in\Omega_m$ has a
unique unordered representation
\begin{equation}\label{eq:moment-param}
 x_j=\sum_{a=0}^{r}w_a u_a^j,
 \qquad 0\leq j\leq2r.
\end{equation}
Here
\begin{equation}\label{eq:param-domain}
u_0=0,
\quad u_1,\ldots,u_r\in\R\setminus\{0\}
\text{ are pairwise distinct},
\quad w_0,\ldots,w_r>0.
\end{equation}
Without ordering the nonzero nodes, the parameter map is generically
$r!$-to-one.
\end{theorem}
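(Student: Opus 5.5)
The plan is to build a Gauss--Radau quadrature for the positive definite inner product \eqref{eq:moment-inner}, with one node fixed at $0$. For $x\in\Omega_m$ we have a genuine inner product on $\R[u]_{\leq r}$, but the moment functional $L_x$ is known only up to degree $2r$. So we work with the modified functional $\tilde L_x(p)=L_x(u\,p)$, defined for $\deg p\leq 2r-1$. Its Gram matrix on $\R[u]_{\leq r-1}$ is the shifted Hankel matrix $H'=(x_{i+j+1})_{0\leq i,j\leq r-1}$. The exceptional set we discard is $\{\det H'=0\}\cup\{\det H_{r}(x)=0\}$ together with a discriminant-type locus defined below. Here $H_r(x)$ is the leading $r\times r$ block, which is automatically positive on $\Omega_m$. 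This set is a proper real algebraic subset, so it has measure zero.

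\textbf{Construction.} When $\det H'\neq0$, the form $\tilde L_x$ is nondegenerate on $\R[u]_{\leq r-1}$. Define the monic degree-$r$ polynomial $\pi(u)=u^r+\cdots$ by the $r$ linear conditions $L_x(u\,\pi(u)u^k)=0$ for $0\leq k\leq r-1$. These involve only moments up to degree $2r$, and their coefficient matrix is $H'$, so $\pi$ is unique. The nonzero nodes $u_1,\ldots,u_r$ will be the roots of $\pi$, and $u_0=0$. To show the roots are real and distinct, use the multiplication operator. Consider $V=\{p\in\R[u]_{\leq r}\}$ and the quotient $\R[u]_{\leq r}/\R\pi$ compatible with the inner product. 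Equivalently, realize $\pi$ as the characteristic polynomial of a symmetric matrix: the operator $p\mapsto$ (projection of $u p$) on $W=\{p: \deg p\leq r-1\}$ with respect to $\langle\,\cdot,\cdot\rangle_x$ is self-adjoint, since $\langle up,q\rangle_x=L_x(upq)$ is symmetric and defined for $\deg p,\deg q\leq r-1$.

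I expect the \textbf{main obstacle} to be the reality of the roots of $\pi$ for the \emph{Radau} polynomial, as opposed to the Gauss polynomial. I would handle it via the self-adjoint generalized eigenproblem $H' v=\lambda H_r v$: the pencil $(p,q)\mapsto L_x(upq)$ relative to the positive definite form $L_x(pq)$ on $\R[u]_{\leq r-1}$. Its eigenvalues are real, and I expect $\pi(u)$ to be proportional to $u\det(uH_r-H')$ after checking degrees. If this identification fails, the fallback is: $\pi$ is the orthogonal polynomial of degree $r$ for the form $\tilde L_x$; when that form is indefinite, I would instead restrict to the open subset where the roots are real, and argue from the structure of the overall argument. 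Generic distinctness and $u_a\neq0$ then come from discarding the discriminant of $\pi$ and its constant term, $\pi(0)\propto\det H'$.

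\textbf{Weights, positivity and uniqueness.} The weights $w_a$ are determined by the Vandermonde system \eqref{eq:moment-param} for $j\leq r$, which is invertible since the nodes are distinct. Exactness for all $j\leq 2r$ follows by division. Write $p=u\pi q+\rho$ with $\deg\rho\leq r$ and $\deg q\leq r-1$. Then $L_x(u\pi q)=0$ by construction, and the quadrature kills $u\pi q$ because it vanishes at every node. For positivity, take the Lagrange basis polynomial $\lambda_a$ of degree $r$ at the $r+1$ nodes. Then $w_a=\sum_b w_b\lambda_a(u_b)^2=L_x(\lambda_a^2)>0$, by exactness in degree $2r$ and positive definiteness.

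For uniqueness, suppose any representation \eqref{eq:moment-param} is given. Then $L_x(u\prod_{a\geq1}(u-u_a)\,u^k)=0$ for $k\leq r-1$, so $\prod_{a\geq1}(u-u_a)=\pi$. Hence the nodes are determined, and the Vandermonde system then determines the weights. The $r!$-to-one statement is simply the permutation of the nonzero nodes together with their weights.
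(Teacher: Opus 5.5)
Your proposal has a genuine gap at the step you flagged yourself: you never prove that $\pi$ has $r$ real, distinct roots. Everything after that step depends on it, including real Lagrange polynomials and $w_a=L_x(\lambda_a^2)>0$.

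Your proposed identification $\pi\propto u\det(uH_r-H')$ is false. That polynomial has degree $r+1$, not $r$. Moreover, $\det(uH_r-H')/\det H_r$ is the characteristic polynomial of the compression of multiplication by $u$ to $\mathscr P_{r-1}$. That is the degree-$r$ orthogonal polynomial of $L_x$, whose roots are the Gauss nodes, not the Radau nodes. Already for $r=1$ you get $\pi(u)=u-x_2/x_1$, while $\det(uH_1-H')=x_0u-x_1$.

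The fallback of restricting to the locus where $\pi$ is real-rooted does not prove the statement. The theorem asserts existence off a measure-zero algebraic set, and nothing in your argument shows that the complex-rooted part of $\Omega_m\setminus\{\det H'=0\}$ is null. It is in fact empty, but that is exactly what has to be shown. Likewise, discarding a discriminant locus would require checking that the discriminant is not identically zero on $\Omega_m$. A smaller slip: $\pi(0)$ is not proportional to $\det H'$. By Cramer's rule $\pi(0)=(-1)^r\det H''/\det H'$, where $H''=(x_{i+j+2})_{0\leq i,j\leq r-1}$.

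The missing idea is to use a self-adjoint operator on the whole $(r+1)$-dimensional space $\mathscr P_r$, where $\langle\cdot,\cdot\rangle_x$ is positive definite, rather than on $\mathscr P_{r-1}$.
\begin{itemize}
\item The paper sets $Ap=up$ for $p\in\mathscr P_{r-1}$ and $A\pi=0$. Because $\pi\perp u\mathscr P_{r-1}$, this $A$ is self-adjoint.
\item If $Ae=\kappa e$ with $\kappa\neq0$ and $e=p+c\pi$, evaluate $up=\kappa(p+c\pi)$ at $u=\kappa$. This gives $\pi(\kappa)=0$, so the spectrum is $\{0\}$ together with the roots of $\pi$.
\item The vector $1$ is cyclic, since $A^k1=u^k$ for $k\leq r$. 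Hence every eigenspace is a line, and all $r+1$ eigenvalues are distinct.
\end{itemize}
This yields real, distinct, nonzero nodes on all of $\Omega_m\setminus\{\det H'=0\}$, with no discriminant locus needed. It also gives $w_a=|\langle1,e_a\rangle_x|^2>0$ directly.

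Your pencil idea can be repaired instead. The correct formula is $\pi(u)=\det(uH'-H'')/\det H'$. Since $H''$ is a principal submatrix of $H_m$, it is positive definite. So the reciprocals of the roots of $\pi$ are the eigenvalues of the symmetric matrix $H''^{-1/2}H'H''^{-1/2}$, which are real and nonzero. Distinctness would still need a separate one-dimensional-eigenspace argument.

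Once you have real distinct nodes, the rest of your argument is correct: the division argument for exactness through degree $2r$, the Lagrange-square positivity, and the uniqueness via the orthogonality conditions.
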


\begin{proof}
Fix $x\in\Omega_m$ and consider the positive definite inner product
\eqref{eq:moment-inner} on
$\mathscr P_r=\R[u]_{\leq r}$.
The subspace $u\mathscr P_{r-1}$ has codimension one.
Choose a nonzero $h\in\mathscr P_r$ such that
\begin{equation}\label{eq:h-orthogonal}
h\perp u\mathscr P_{r-1}.
\end{equation}
The orthogonal complement is one-dimensional, since
\[
\dim\mathscr P_r=r+1,
\qquad
\dim(u\mathscr P_{r-1})=r.
\]
Thus $h$ is unique up to a nonzero scalar.  Before normalizing it, we must
verify that it has degree $r$ outside an exceptional algebraic subset.
Write
\[
 h(u)=\sum_{\ell=0}^{r}c_\ell u^\ell.
\]
Since $u\mathscr P_{r-1}$ has basis $u,u^2,\ldots,u^r$, the orthogonality
conditions in \eqref{eq:h-orthogonal} are explicitly
\[
 0=\langle h,u^j\rangle_x
   =L_x(hu^j)
   =\sum_{\ell=0}^{r}x_{j+\ell}c_\ell,
 \qquad 1\leq j\leq r.
\]
Thus they form a homogeneous linear system for the $r+1$ coefficients of
$h$, with coefficient matrix
\[
B(x)=
\begin{pmatrix}
x_1&x_2&\cdots&x_{r+1}\\
x_2&x_3&\cdots&x_{r+2}\\
\vdots&\vdots&\ddots&\vdots\\
x_r&x_{r+1}&\cdots&x_{2r}
\end{pmatrix}.
\]
The solution space is exactly $(u\mathscr P_{r-1})^\perp$, hence is
one-dimensional; equivalently, $B(x)$ has rank $r$ and determines the line
$\R h=\ker B(x)$ from $x$.

For a full-rank $r\times(r+1)$ matrix, the signed maximal minors form a
kernel vector.  It follows that the last coordinate $c_r$ is, up to a common
nonzero factor and sign, the maximal minor obtained by deleting the last
column of $B(x)$.  In the present notation this minor is the shifted moment
determinant
\[
 \Delta_r^{(1)}(x)
 :=\det
 \begin{pmatrix}
 x_1&x_2&\cdots&x_r\\
 x_2&x_3&\cdots&x_{r+1}\\
 \vdots&\vdots&\ddots&\vdots\\
 x_r&x_{r+1}&\cdots&x_{2r-1}
 \end{pmatrix}.
\]
Consequently,
\[
 \deg h<r
 \quad\Longleftrightarrow\quad
 c_r=0
 \quad\Longleftrightarrow\quad
 \Delta_r^{(1)}(x)=0.
\]
Equivalently, this is the degeneracy locus of the shifted moment form
$(p,q)\mapsto L_x(upq)$ on $\mathscr P_{r-1}$.  Thus the exceptional condition
is algebraic in the moments.

To see that $\Delta_r^{(1)}$ is a nonzero polynomial, evaluate it at the
point $x^\star\in\R^{2r+1}$ defined by
\[
 x_r^\star=1,\qquad
 x_j^\star=0\quad(0\leq j\leq2r,\ j\neq r).
\]
The matrix defining $\Delta_r^{(1)}(x^\star)$ has ones on the anti-diagonal
and zeros elsewhere.  It is the $r\times r$ matrix obtained by reversing
the columns of the identity matrix.  Hence
\[
 \Delta_r^{(1)}(x^\star)
 =\det\begin{pmatrix}
  0&\cdots&0&1\\
  0&\cdots&1&0\\
  \vdots&&\vdots&\vdots\\
  1&\cdots&0&0
 \end{pmatrix}
 =(-1)^{r(r-1)/2}\neq0,
\]
since the reversal permutation has $r(r-1)/2$ inversions.  This test point
need not belong to $\Omega_m$: the calculation proves that
$\Delta_r^{(1)}$ is nonzero as a polynomial on the ambient space
$\R^{2r+1}$.  Its zero set is therefore a proper real algebraic subset of
Lebesgue measure zero, and its intersection with $\Omega_m$ also has
measure zero.  Outside this intersection, the preceding equivalence gives
$\deg h=r$, and we may normalize $h$ to be monic.

Then $\mathscr P_r=\mathscr P_{r-1}\oplus\R h$.  Define
\begin{equation}\label{eq:A-def}
Ap=up\quad(p\in\mathscr P_{r-1}),
\qquad Ah=0.
\end{equation}
For $p,q\in\mathscr P_{r-1}$,
\[
\langle Ap,q\rangle_x=L_x(upq)=\langle p,Aq\rangle_x,
\]
and the mixed terms with $h$ vanish by \eqref{eq:h-orthogonal}.  Thus $A$ is
self-adjoint: if $A^*$ denotes the adjoint characterized by
$\langle Ap,q\rangle_x=\langle p,A^*q\rangle_x$, the preceding identities say
that $A^*=A$ on all of $\mathscr P_r$.

We recall the finite-dimensional spectral theorem in the form used here.
After complexification, extend $\langle\ ,\ \rangle_x$ sesquilinearly,
with the first argument linear, to a Hermitian inner product.  The operator
$A$ remains self-adjoint.  On a complex inner-product space, an operator $T$
is unitarily diagonalizable if and only if it is normal, meaning that
$TT^*=T^*T$.  Every self-adjoint operator is normal, since $T=T^*$ implies
$TT^*=T^*T$.  Its
eigenvalues are real: if $Te=\kappa e$ and $e\neq0$, then
\[
 \kappa\langle e,e\rangle
 =\langle Te,e\rangle
 =\langle e,Te\rangle
 =\wb\kappa\langle e,e\rangle.
\]
It follows that $A$ is diagonalizable with real eigenvalues.  Equivalently,
the real spectral theorem provides an orthonormal basis of real eigenvectors
for $A$.

Self-adjointness alone does not imply that these eigenvalues are distinct; the
additional input is cyclicity.  Indeed,
\[
1,A1,\ldots,A^r1=1,u,\ldots,u^r
\]
is a basis of $\mathscr P_r$, so $1$ is cyclic for $A$.  Suppose that the
eigenspace $\ker(A-\kappa I)$ had dimension at least two.  The linear functional
$e\mapsto\langle e,1\rangle_x$ would then have a nonzero kernel on
$\ker(A-\kappa I)$; choose $0\neq e\in\ker(A-\kappa I)$ with
$\langle e,1\rangle_x=0$.  For every $k\geq0$, self-adjointness and
$A^k e=\kappa^k e$ give
\[
 \langle e,A^k1\rangle_x
 =\langle A^k e,1\rangle_x
 =\kappa^k\langle e,1\rangle_x=0.
\]
Thus $e$ would be orthogonal to the cyclic span of $1$, which is all of
$\mathscr P_r$, a contradiction.  Every eigenspace is therefore
one-dimensional.  Since $A$ is diagonalizable on the $(r+1)$-dimensional
space $\mathscr P_r$, it has exactly $r+1$ pairwise distinct eigenvalues.
Finally, $Ah=0$, so one eigenvalue is $u_0=0$; the remaining eigenvalues
$u_1,\ldots,u_r$ are distinct and nonzero.

Choose an orthonormal eigenbasis $e_0,\ldots,e_r$ with $Ae_a=u_a e_a$,
and put
\begin{equation}\label{eq:weights}
w_a=|\langle1,e_a\rangle_x|^2.
\end{equation}
Every $w_a$ is positive, because an eigenvector orthogonal to $1$ would be
orthogonal to the cyclic subspace generated by $1$.  For $0\leq j\leq2r$,
choose nonnegative integers $p,q\leq r$ with $p+q=j$; for example, take
$(p,q)=(j,0)$ if $j\leq r$ and $(p,q)=(r,j-r)$ otherwise.  Since
$A^k1=u^k$ for $0\leq k\leq r$,
\[
x_j=\langle u^p,u^q\rangle_x
=\langle A^p1,A^q1\rangle_x.
\]
Expand the cyclic vector in the orthonormal eigenbasis:
\[
1=\sum_{a=0}^{r}\langle1,e_a\rangle_x e_a,
\qquad
A^p1=\sum_{a=0}^{r}\langle1,e_a\rangle_xu_a^p e_a.
\]
Orthonormality eliminates every cross term, and therefore
\begin{align*}
\langle A^p1,A^q1\rangle_x
&=\sum_{a,b}
\langle1,e_a\rangle_x\wb{\langle1,e_b\rangle_x}
u_a^pu_b^q\langle e_a,e_b\rangle_x\\
&=\sum_{a=0}^{r}|\langle1,e_a\rangle_x|^2u_a^{p+q}
=\sum_{a=0}^{r}w_au_a^j.
\end{align*}
This proves \eqref{eq:moment-param}, and hence existence.

By linearity, any nodes and weights satisfying \eqref{eq:moment-param}
also satisfy
\[
 L_x(g)=\sum_{a=0}^{r}w_ag(u_a)
 \qquad\bigl(g\in\R[u]_{\leq2r}\bigr).
\]
This is a consequence of the moment equations, not an additional
assumption.
For uniqueness, suppose \eqref{eq:moment-param}--\eqref{eq:param-domain} hold.
Equation \eqref{eq:h-orthogonal} yields, for every
$p\in\mathscr P_{r-1}$,
\[
0=\langle h,up\rangle_x=L_x(hup)
=\sum_{a=1}^{r}w_au_ah(u_a)p(u_a).
\]
The last equality uses the preceding identity and
$\deg(hup)\leq2r$.  For a fixed $b\in\{1,\ldots,r\}$, take the Lagrange
polynomial
\[
p_b(u)=\prod_{\substack{1\leq a\leq r\\a\neq b}}
\frac{u-u_a}{u_b-u_a}\in\mathscr P_{r-1}.
\]
It satisfies $p_b(u_a)=\delta_{ab}$, so the preceding identity becomes
\[
0=w_bu_bh(u_b).
\]
The assumptions $w_b>0$ and $u_b\neq0$ give $h(u_b)=0$.  Thus all $r$
nonzero nodes are roots of the monic degree-$r$ polynomial $h$, and hence
\begin{equation}\label{eq:h-roots}
h(u)=\prod_{a=1}^{r}(u-u_a).
\end{equation}
Since the line $\R h$ is determined by $x$ and $h$ has been normalized
to be monic, the polynomial $h$ itself is uniquely determined by $x$.
Consequently its roots are uniquely determined as a multiset.  Since
$u_1,\ldots,u_r$ are pairwise distinct, their unordered set is uniquely
determined by $x$.

Once $u_0=0,u_1,\ldots,u_r$ are fixed, the first $r+1$ moment equations form
the Vandermonde system
\[
\begin{pmatrix}
1 & 1 & \cdots & 1\\
0 & u_1 & \cdots & u_r\\
0 & u_1^2 & \cdots & u_r^2\\
\vdots & \vdots & \ddots & \vdots\\
0 & u_1^r & \cdots & u_r^r
\end{pmatrix}
\begin{pmatrix}
w_0\\ w_1\\ w_2\\ \vdots\\ w_r
\end{pmatrix}
=
\begin{pmatrix}
x_0\\ x_1\\ x_2\\ \vdots\\ x_r
\end{pmatrix}.
\]
Its determinant is
\[
\prod_{0\leq a<b\leq r}(u_b-u_a)\neq0,
\]
because the nodes are pairwise distinct.  The weights are therefore unique.
Only a simultaneous permutation of the pairs
$(u_a,w_a)$, $1\leq a\leq r$, remains possible.  This proves the asserted
unordered uniqueness and the generic $r!$-to-one assertion.
\end{proof}

The identity for $L_x$ obtained in the proof is a quadrature formula exact
through degree $2r$.  With the node $u_0=0$ prescribed, it is of
Gauss--Radau type; this terminology describes the representation just
constructed.

\subsection{Determinant, Jacobian, and the Laplace weight}

We next compute the Hankel determinant, the Jacobian of the moment map, and the
Laplace weight in these parameters.  The Jacobian also shows that the
parameters are smooth coordinates on the required full-measure open set.

Let
\begin{equation}\label{eq:Delta}
\Delta(u)=\prod_{1\leq a<b\leq r}(u_b-u_a).
\end{equation}

\Needspace{15\baselineskip}
\begin{coro}\label{cor:factorization}
Let the nodes and weights satisfy \eqref{eq:param-domain}.  Write
$w=(w_0,\ldots,w_r)$ and $u=(u_1,\ldots,u_r)$, and denote the moment map in
\eqref{eq:moment-param} by $\Phi$, so that $x=\Phi(w,u)$.  Then
\begin{align}
f_m(x)
&=w_0\prod_{a=1}^{r}w_au_a^2\Delta(u)^2,
\label{eq:fm-factor}\\*
|\det D\Phi|
&=\left(\prod_{a=1}^{r}w_au_a^2\right)|\Delta(u)|^4,
\label{eq:jacobian}\\*
\ell_m(x)
&=w_0+\sum_{a=1}^{r}w_a(1+u_a^2)^r.
\label{eq:ell-factor}
\end{align}
\end{coro}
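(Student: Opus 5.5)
The plan is to treat the three identities separately, all via the factorization of the Hankel matrix induced by the moment parametrization \eqref{eq:moment-param}.

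\emph{Determinant.} Write $V$ for the $(r+1)\times(r+1)$ Vandermonde matrix with rows indexed by the nodes $u_0=0,u_1,\ldots,u_r$ and columns by the powers $0,\ldots,r$, so $V_{a,i}=u_a^i$. Then \eqref{eq:moment-param} says exactly
\[
H_m(x)=V^{T}\operatorname{diag}(w_0,\ldots,w_r)V,
\]
since $(V^TWV)_{ij}=\sum_a w_au_a^{i+j}=x_{i+j}$. Hence $f_m(x)=\prod_a w_a\cdot\det V^2$. Because $u_0=0$, we have $\det V=\prod_{0\le a<b\le r}(u_b-u_a)=\prod_{b\ge1}u_b\cdot\Delta(u)$, and \eqref{eq:fm-factor} follows.

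\emph{Laplace weight.} This is immediate. We have $\ell_m(x)=L_x((1+u^2)^r)$, and by the quadrature identity $L_x(g)=\sum_a w_ag(u_a)$ for $\deg g\le 2r$ (derived in the proof of Theorem~\ref{thm:radau}), this equals $\sum_a w_a(1+u_a^2)^r$. The $a=0$ term contributes $w_0$, which gives \eqref{eq:ell-factor}.

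\emph{Jacobian.} This is the main step. The map is $\Phi:(w_0,\ldots,w_r,u_1,\ldots,u_r)\mapsto(x_0,\ldots,x_{2r})$, a square system of size $2r+1$. The partial derivatives are
\[
\partial x_j/\partial w_a=u_a^j,\qquad \partial x_j/\partial u_a=w_a\,j\,u_a^{j-1}.
\]
First, pull the factor $w_a$ out of each $u_a$-column; this yields $\prod_{a\ge1}w_a$ times the determinant of the confluent Vandermonde matrix whose columns are $(u_a^j)_j$ for $a=0,\ldots,r$ and $(ju_a^{j-1})_j$ for $a=1,\ldots,r$. The node $0$ appears with multiplicity one and each $u_a$ with multiplicity two.

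Second, apply the standard confluent Vandermonde formula: for nodes $z_i$ with multiplicities $n_i$, the determinant is $\pm\prod_{i<k}(z_k-z_i)^{n_in_k}$ times a product of factorials, which here is $\prod_{n_i\le 2}1!=1$. Pairs $(0,u_a)$ contribute $u_a^{1\cdot2}=u_a^2$, and pairs $(u_a,u_b)$ contribute $(u_b-u_a)^4$. This gives $|\det D\Phi|=\prod_a w_au_a^2\,|\Delta|^4$.

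If a self-contained argument for the confluent formula is preferred, I would derive it as a limit from the ordinary Vandermonde determinant on $2r+1$ nodes $0,u_a,u_a+\varepsilon_a$. Replace each column $u_a+\varepsilon_a$ by the difference quotient with the column $u_a$, then let $\varepsilon_a\to0$. The product $\prod(z_k-z_i)$ divided by $\prod\varepsilon_a$ then converges to the claimed expression.

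The only care needed is bookkeeping of signs and exponents in this limit. Since absolute values are taken, signs are irrelevant. The nonvanishing of the result on the domain \eqref{eq:param-domain} also confirms that $\Phi$ is a local diffeomorphism there, as the paper asserts in the preceding text.
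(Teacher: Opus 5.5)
Your proposal is correct and follows essentially the same route as the paper. You use the congruence $H_m=V\operatorname{diag}(w_0,\ldots,w_r)V^{\mathsf T}$ (with $V$ transposed relative to the paper's convention) for the determinant, and you extract the weights and evaluate the confluent Vandermonde determinant as a limit of difference quotients for the Jacobian; your $\ell_m$ computation via the quadrature identity is equivalent to the paper's direct expansion by linearity of the moment equations.
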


\begin{proof}
Let
\[
V=
\begin{pmatrix}
1&1&\cdots&1\\
0&u_1&\cdots&u_r\\
0&u_1^2&\cdots&u_r^2\\
\vdots&\vdots&\ddots&\vdots\\
0&u_1^r&\cdots&u_r^r
\end{pmatrix}.
\]
Thus $V=(u_a^i)_{0\leq i,a\leq r}$, with its first column simplified by
$u_0=0$.  Then
$H_m=V\operatorname{diag}(w_0,\ldots,w_r)V^{\mathsf T}$ and
\[
\det V=\left(\prod_{a=1}^{r}u_a\right)\Delta(u).
\]
This proves \eqref{eq:fm-factor}.

The derivative matrix of $\Phi$ has columns
\[
\frac{\partial x_j}{\partial w_a}=u_a^j,
\qquad
\frac{\partial x_j}{\partial u_a}=w_aju_a^{j-1}
\quad(a\geq1).
\]
Set
\[
v(t)=(1,t,t^2,\ldots,t^{2r})^{\mathsf T},
\qquad
v'(t)=(0,1,2t,\ldots,2rt^{2r-1})^{\mathsf T}.
\]
In the parameter order $(w_0,\ldots,w_r,u_1,\ldots,u_r)$, the columns are
\[
v(0),v(u_1),\ldots,v(u_r),
w_1v'(u_1),\ldots,w_rv'(u_r).
\]
After extracting $\prod_{a=1}^{r}w_a$ and interleaving the value and
derivative columns, it remains, up to sign, to calculate
\begin{equation}\label{eq:confluent-determinant}
\Delta_{\mathrm{conf}}(u)
=\det\bigl[v(0),v(u_1),v'(u_1),\ldots,v(u_r),v'(u_r)\bigr].
\end{equation}

This confluent Vandermonde determinant can be obtained directly from the
ordinary one.  Replace $v'(u_a)$ temporarily by
\[
\frac{v(u_a+\varepsilon_a)-v(u_a)}{\varepsilon_a}.
\]
Column subtraction shows that the resulting determinant is the ordinary
Vandermonde determinant at the nodes
\[
0,u_1,u_1+\varepsilon_1,\ldots,u_r,u_r+\varepsilon_r
\]
divided by $\varepsilon_1\cdots\varepsilon_r$.  Letting every
$\varepsilon_a$ tend to zero, the difference within the $a$-th pair cancels
the denominator $\varepsilon_a$; the two differences between $0$ and that
pair contribute $u_a^2$; and the four differences between the $a$-th and
$b$-th pairs contribute $(u_b-u_a)^4$.  Hence
\begin{equation}\label{eq:confluent-value}
\Delta_{\mathrm{conf}}(u)=\left(\prod_{a=1}^{r}u_a^2\right)
\prod_{a<b}(u_b-u_a)^4
=\left(\prod_{a=1}^{r}u_a^2\right)\Delta(u)^4.
\end{equation}
There is no extra factorial, since every repeated node contributes only its
first derivative column and $1!=1$.  Restoring the extracted weights and
taking absolute values proves \eqref{eq:jacobian}.

Finally, \eqref{eq:ell} and \eqref{eq:moment-param} give directly
\begin{align*}
\ell_m(x)
&=\sum_{j=0}^{r}\binom rj x_{2j}
=\sum_{a=0}^{r}w_a\sum_{j=0}^{r}\binom rj u_a^{2j}\\
&=\sum_{a=0}^{r}w_a(1+u_a^2)^r\\
&=w_0+\sum_{a=1}^{r}w_a(1+u_a^2)^r,
\end{align*}
because $u_0=0$.  This proves \eqref{eq:ell-factor}.
\end{proof}

It remains to justify the use of these parameters as coordinates.  Any choice
of distinct real nodes $u_0=0,u_1,\ldots,u_r$ and positive
weights defines a point $x\in\Omega_m$ through \eqref{eq:moment-param}:
the factorization $H_m=V\operatorname{diag}(w_0,\ldots,w_r)V^{\mathsf T}$ 
is positive definite, and the matrix $V$ is invertible.
Moreover, \eqref{eq:jacobian} gives $\det D\Phi\neq0$ throughout this
parameter domain, so the inverse function theorem makes $\Phi$ a local
diffeomorphism.  Let
$\Omega_m^\circ=\Omega_m\setminus\{\Delta_r^{(1)}=0\}$.
By Theorem~\ref{thm:radau}, restricting to $\Phi^{-1}(\Omega_m^\circ)$ and
imposing $u_1<\cdots<u_r$ gives a bijection onto $\Omega_m^\circ$.  Its local
inverses are smooth, so it is a diffeomorphism.  Allowing all orders gives an
$r!$-sheeted
covering of $\Omega_m^\circ$.  The parameters mapping into the measure-zero
set $\Omega_m\setminus\Omega_m^\circ$ also form a measure-zero set because
$\Phi$ is a local diffeomorphism.  These facts justify the change of variables
below.

\begin{example}
\label{ex:m3-coordinates}

For $m=3$ the construction can be written out in full.

Let $m=3$, so $r=2$, and write
\[
H_3(x)=
\begin{pmatrix}
x_0&x_1&x_2\\
x_1&x_2&x_3\\
x_2&x_3&x_4
\end{pmatrix}>0.
\]
On the generic chart under consideration, the monic polynomial
\[
h(u)=u^2+c_1u+c_0
\]
is determined by $h\perp u\mathscr P_1$.  The two orthogonality equations are
\[
L_x(uh)=x_3+c_1x_2+c_0x_1=0,
\qquad
L_x(u^2h)=x_4+c_1x_3+c_0x_2=0,
\]
or, equivalently,
\begin{equation}\label{eq:m3-h-system}
\begin{pmatrix}x_1&x_2\\x_2&x_3\end{pmatrix}
\begin{pmatrix}c_0\\c_1\end{pmatrix}
=-\begin{pmatrix}x_3\\x_4\end{pmatrix}.
\end{equation}
Away from the algebraic locus $x_1x_3-x_2^2=0$, this system has a unique
solution.  The two nonzero Radau nodes are the roots of $h$:
\[
h(u)=(u-u_1)(u-u_2).
\]
Once the nodes are known, the weights are determined by
\[
\begin{pmatrix}
1&1&1\\0&u_1&u_2\\0&u_1^2&u_2^2
\end{pmatrix}
\begin{pmatrix}w_0\\w_1\\w_2\end{pmatrix}
=\begin{pmatrix}x_0\\x_1\\x_2\end{pmatrix}.
\]
Explicitly,
\begin{equation}\label{eq:m3-weights}
w_1=\frac{x_1u_2-x_2}{u_1(u_2-u_1)},
\qquad
w_2=\frac{x_2-x_1u_1}{u_2(u_2-u_1)},
\qquad
w_0=x_0-w_1-w_2.
\end{equation}
The positivity of these expressions is not evident from the formulas alone;
it is supplied by the spectral construction in the proof of
Theorem~\ref{thm:radau}.

The Vandermonde matrix and the structured congruence factorization are
\[
V=
\begin{pmatrix}
1&1&1\\0&u_1&u_2\\0&u_1^2&u_2^2
\end{pmatrix},
\qquad
H_3=V\operatorname{diag}(w_0,w_1,w_2)V^{\mathsf T}.
\]
Since $\det V=u_1u_2(u_2-u_1)$, we obtain
\begin{equation}\label{eq:m3-f}
f_3(x)=w_0w_1w_2u_1^2u_2^2(u_2-u_1)^2.
\end{equation}
With parameter order $(w_0,w_1,w_2,u_1,u_2)$, the full Jacobian matrix is
\[
D\Phi=
\begin{pmatrix}
1&1&1&0&0\\
0&u_1&u_2&w_1&w_2\\
0&u_1^2&u_2^2&2w_1u_1&2w_2u_2\\
0&u_1^3&u_2^3&3w_1u_1^2&3w_2u_2^2\\
0&u_1^4&u_2^4&4w_1u_1^3&4w_2u_2^3
\end{pmatrix}.
\]
The calculation \eqref{eq:confluent-determinant}--\eqref{eq:confluent-value}
specializes to
\begin{equation}\label{eq:m3-jacobian}
|\det D\Phi|
=w_1w_2u_1^2u_2^2|u_2-u_1|^4.
\end{equation}
Finally,
\begin{align}
\ell_3(x)
&=x_0+2x_2+x_4\notag\\
&=w_0+w_1(1+u_1^2)^2+w_2(1+u_2^2)^2.
\label{eq:m3-ell}
\end{align}
Thus every factor used in the zeta calculation is already visible in this
$3\times3$ case.
\end{example}

\section{Evaluation of the Laplace zeta integral}\label{sec:zeta}

\subsection{The radial candidate poles}

For $\operatorname{Re}s>0$, define
\begin{equation}\label{eq:Z-def}
\cZ_m(s)=\int_{\Omega_m}
\e^{-\ell_m(x)}f_m(x)^s\dd x.
\end{equation}
Here and below $\dd x=\dd x_0\cdots\dd x_{2m-2}$ is Lebesgue measure.  This
is an Archimedean local zeta integral attached to the branch $\Omega_m$, with
a Laplace weight.  The condition $f_m>0$ makes $f_m^s$ single-valued, while
the exponential ensures convergence on the noncompact cone and separates the
radial scaling.  The integral is absolutely convergent: the exponential
controls infinity by Lemma~\ref{lem:ell-coercive}, whereas
$f_m^{\operatorname{Re}s}$ is locally bounded and vanishes at the boundary of
the cone.  We initially work in this half-plane, so absolute convergence
justifies every use of Fubini's theorem and every change of variables below.

Homogeneity already displays the candidate poles produced by the vertex.
Write
\[
x=\rho y,
\qquad
\rho=\ell_m(x)>0,
\qquad
y\in\Sigma_m:=\Omega_m\cap(\ell_m=1).
\]
The map
\[
\Theta:(0,\infty)\times\Sigma_m\longrightarrow\Omega_m,
\qquad \Theta(\rho,y)=\rho y,
\]
is a diffeomorphism.  Indeed, its inverse is
\[
x\longmapsto\left(\ell_m(x),\frac{x}{\ell_m(x)}\right),
\]
using the linearity of $\ell_m$ and the fact that $\Omega_m$ is a cone.

The induced measure on the slice can be written explicitly.
Choose local coordinates $z=(z_1,\ldots,z_{N-1})$ on $\Sigma_m$ and write
$y=y(z)$.  Then
\[
\frac{\partial\Theta}{\partial\rho}=y,
\qquad
\frac{\partial\Theta}{\partial z_i}
=\rho\frac{\partial y}{\partial z_i}.
\]
Consequently the Jacobian is
\[
\rho^{N-1}
\left|\det\left(y,\frac{\partial y}{\partial z_1},\ldots,
\frac{\partial y}{\partial z_{N-1}}\right)\right|.
\]
Define the smooth positive measure on the slice by
\begin{equation}\label{eq:nu-definition}
\dd\nu(y)=
\left|\det\left(y,\frac{\partial y}{\partial z_1},\ldots,
\frac{\partial y}{\partial z_{N-1}}\right)\right|
\dd z_1\cdots\dd z_{N-1}.
\end{equation}
The ordinary transformation law for densities shows that this definition is
independent of the chosen coordinates.  It gives the measure identity
\begin{equation}\label{eq:cone-measure}
\dd x=\rho^{N-1}\dd\rho\dd\nu(y).
\end{equation}
The exponent $N-1$ occurs because the radial column is unchanged, whereas
each of the $N-1$ tangent columns is multiplied by $\rho$.

Equivalently, if $\dd\sigma$ is surface measure on the affine hyperplane
$(\ell_m=1)$, then
\begin{equation}\label{eq:nu-surface}
\dd\nu=\frac{\dd\sigma}{\|\nabla\ell_m\|}.
\end{equation}
Indeed, for the unit normal
$n=\nabla\ell_m/\|\nabla\ell_m\|$ and $y\in\Sigma_m$, one has
$y\cdot n=\ell_m(y)/\|\nabla\ell_m\|=1/\|\nabla\ell_m\|$.
In the present coordinates,
\[
\|\nabla\ell_m\|^2
=\sum_{j=0}^{r}\binom rj^2=\binom{2r}{r}.
\]

Recall that the Gamma function is initially defined by Euler's integral
\begin{equation}\label{eq:gamma-definition}
\Gamma(z)=\int_0^\infty \e^{-\rho}\rho^{z-1}\dd\rho,
\qquad \operatorname{Re}z>0.
\end{equation}
It extends meromorphically to $\C$, has no zeros, and has simple poles exactly
at $0,-1,-2,\ldots$; moreover,
$\operatorname{Res}_{z=-n}\Gamma(z)=(-1)^n/n!$ for $n\geq0$.  These standard
facts, as well as the identity $\Gamma(z+1)=z\Gamma(z)$ used below, are
recalled in \cite[Section 5.2]{DLMF}.

Using \eqref{eq:cone-measure} and \eqref{eq:gamma-definition}, we obtain
\begin{equation}\label{eq:polar}
\cZ_m(s)
=\Gamma(ms+N)\cA_m(s),
\qquad
\cA_m(s)=\int_{\Sigma_m}f_m(y)^s\dd\nu(y).
\end{equation}
Indeed, $\ell_m(\rho y)=\rho$ and
$f_m(\rho y)^s=\rho^{ms}f_m(y)^s$, so the radial factor is
\[
\int_0^\infty\e^{-\rho}\rho^{ms+N-1}\dd\rho=\Gamma(ms+N).
\]
The values $ms+N=0,-1,-2,\ldots$ are poles of the radial Gamma factor, but
they may be cancelled by $\cA_m(s)$.  We compute the latter next.

\subsection{Reduction to the Morris integral}

The moment coordinates separate the positive weights from the real nodes.
After integrating the weights, the remaining Vandermonde integral is carried
to the unit circle by $u=\tan(\theta/2)$ and evaluated by the Morris formula.

Set
\begin{equation}\label{eq:tau}
\tau=s+2.
\end{equation}
The positive definiteness of $H_m(x)$ is used in
Theorem~\ref{thm:radau} to make the operator $A$ self-adjoint.  Its eigenvalues, and
hence the nodes $u_1,\ldots,u_r$, are therefore real.  The weights are squared
orthogonal projections, so they are nonnegative; the cyclicity argument in the
proof of Theorem~\ref{thm:radau} shows that none of them vanishes, and hence they are
strictly positive.  Conversely, the argument following
Corollary~\ref{cor:factorization} shows that arbitrary distinct real nodes
$u_0=0,u_1,\ldots,u_r$ and positive weights give a point of $\Omega_m$,
and establishes the smooth coordinate change on the full-measure subset.
Thus we integrate $u_1,\ldots,u_r$ over $\R^r$ and
$w_0,\ldots,w_r$ over $(0,\infty)^{r+1}$.  The excluded conditions $u_a=0$ and $u_a=u_b$ define
measure-zero subsets of $\R^r$, so they do not change the value of the
integral.  We use the abbreviations
\[
 \dd w=\dd w_0\cdots\dd w_r,
 \qquad
 \dd u=\dd u_1\cdots\dd u_r.
\]
The representation in Theorem~\ref{thm:radau} is unique only as an unordered set of
pairs $(u_a,w_a)$, $1\leq a\leq r$.  If the variables range over all of
$\R^r$, every generic $x$ therefore has the $r!$ preimages obtained by
simultaneously permuting the nonzero nodes and their weights.  The collision
loci and the exceptional algebraic locus have measure zero.  Hence the
finite-covering change-of-variables formula requires division by $r!$.
Equivalently, one could impose $u_1<\cdots<u_r$ and omit this factor.  Using
Theorem~\ref{thm:radau} and Corollary~\ref{cor:factorization}, we obtain
\begin{align}
\cZ_m(s)
={}&\frac1{r!}\int_{(0,\infty)^{r+1}\times\R^r}
\e^{-w_0-\sum_{a=1}^{r}w_a(1+u_a^2)^r}
w_0^s\prod_{a=1}^{r}w_a^{s+1}
\notag\\[-2mm]
&\hspace{20mm}\cdot
\prod_{a=1}^{r}|u_a|^{2s+2}|\Delta(u)|^{2s+4}
\dd w\dd u.
\label{eq:Z-coordinates}
\end{align}
For every $c>0$ and $\operatorname{Re}z>0$,
\[
\int_0^\infty\e^{-cw}w^{z-1}\dd w=c^{-z}\Gamma(z).
\]
Separating the $w$-integrations in \eqref{eq:Z-coordinates} gives
\begin{align}
\cZ_m(s)
={}&\frac1{r!}
\left(\int_0^\infty \e^{-w_0}w_0^{\tau-2}\dd w_0\right)
\int_{\R^r}
\prod_{a=1}^{r}|u_a|^{2\tau-2}|\Delta(u)|^{2\tau}
\notag\\[-1mm]
&\hspace{15mm}\cdot
\prod_{a=1}^{r}
\left(\int_0^\infty
 \e^{-(1+u_a^2)^r w_a}w_a^{\tau-1}\dd w_a\right)\dd u
\notag\\
={}&\frac{\Gamma(\tau-1)\Gamma(\tau)^r}{r!}
\int_{\R^r}
\prod_{a=1}^{r}|u_a|^{2\tau-2}(1+u_a^2)^{-r\tau}
|\Delta(u)|^{2\tau}\dd u
\notag\\
={}&\frac{\Gamma(\tau-1)\Gamma(\tau)^r}{r!}J_r(\tau),
\label{eq:Z-J}
\end{align}
where
\begin{equation}\label{eq:J-real}
J_r(\tau)=\int_{\R^r}
\prod_{a=1}^{r}|u_a|^{2\tau-2}(1+u_a^2)^{-r\tau}
|\Delta(u)|^{2\tau}\dd u.
\end{equation}

Put
\begin{equation}\label{eq:half-angle}
u_a=\tan\frac{\theta_a}{2},
\qquad z_a=\e^{\ii\theta_a},
\qquad -\pi<\theta_a<\pi.
\end{equation}
Then
\begin{align*}
|u_a|&=\frac{|1-z_a|}{|1+z_a|},&
1+u_a^2&=\frac4{|1+z_a|^2},\\
\dd u_a&=\frac2{|1+z_a|^2}\dd\theta_a,&
|u_a-u_b|&=\frac{2|z_a-z_b|}{|1+z_a||1+z_b|}.
\end{align*}
These formulas follow from the identities
\[
1-z_a=-2\ii\e^{\ii\theta_a/2}\sin\frac{\theta_a}{2},
\qquad
1+z_a=2\e^{\ii\theta_a/2}\cos\frac{\theta_a}{2},
\]
whose quotient has absolute value $|\tan(\theta_a/2)|=|u_a|$.  Moreover,
$|1+z_a|^2=4\cos^2(\theta_a/2)$ and
$1+\tan^2 t=\sec^2t$, which gives the formula for $1+u_a^2$.
Differentiation gives
\[
\dd u_a=\frac12\sec^2\frac{\theta_a}{2}\dd\theta_a
=\frac2{|1+z_a|^2}\dd\theta_a.
\]
Finally,
\[
u_a=\ii\frac{1-z_a}{1+z_a}
\quad\Longrightarrow\quad
u_a-u_b=\frac{2\ii(z_b-z_a)}{(1+z_a)(1+z_b)},
\]
which proves the difference formula after taking absolute values.

Substitute these identities into the integrand in \eqref{eq:J-real}, and put
$\dd\theta=\dd\theta_1\cdots\dd\theta_r$.  The four groups of factors become
\begin{align*}
\prod_{a=1}^{r}|u_a|^{2\tau-2}
&=\prod_{a=1}^{r}|1-z_a|^{2\tau-2}|1+z_a|^{-2\tau+2},\\
\prod_{a=1}^{r}(1+u_a^2)^{-r\tau}
&=\prod_{a=1}^{r}
  \left(\frac{4}{|1+z_a|^2}\right)^{-r\tau}
 =2^{-2r^2\tau}\prod_{a=1}^{r}|1+z_a|^{2r\tau},\\
\dd u
&=\prod_{a=1}^{r}\left(\frac{2}{|1+z_a|^2}\,\dd\theta_a\right)
 =2^r\prod_{a=1}^{r}|1+z_a|^{-2}\dd\theta,\\
|\Delta(u)|^{2\tau}
&=\prod_{a<b}
  \left(\frac{2|z_a-z_b|}{|1+z_a||1+z_b|}\right)^{2\tau}\\
&=2^{2\tau\binom r2}
  \prod_{a<b}|z_a-z_b|^{2\tau}
  \prod_{a=1}^{r}|1+z_a|^{-2\tau(r-1)}.
\end{align*}
The last equality uses the fact that a fixed index $a$ occurs in exactly
$r-1$ pairs among the pairs $(a,b)$ with $a<b$; equivalently,
\[
 \prod_{a<b}(|1+z_a||1+z_b|)^{-2\tau}
 =\prod_{a=1}^{r}|1+z_a|^{-2\tau(r-1)}.
\]
Multiplying the four displayed identities gives the transformed density
\begin{align*}
&\prod_{a=1}^{r}|u_a|^{2\tau-2}(1+u_a^2)^{-r\tau}
 |\Delta(u)|^{2\tau}\dd u\\
&\quad=2^{r-2r^2\tau+2\tau\binom r2}
 \prod_{a=1}^{r}|1-z_a|^{2\tau-2}
 \prod_{a=1}^{r}|1+z_a|^{E_a}
 \prod_{a<b}|z_a-z_b|^{2\tau}\dd\theta,
\end{align*}
where the exponent attached to a fixed $|1+z_a|$ is
\begin{align*}
E_a
&=(-2\tau+2)+2r\tau-2-2\tau(r-1)\\
&=-2\tau+2+2r\tau-2-2r\tau+2\tau\\
&=0.
\end{align*}
Here the four summands in the first line come, in order, from
$|u_a|^{2\tau-2}$, $(1+u_a^2)^{-r\tau}$, the differential
$\dd u_a$, and the $r-1$ pairwise-difference factors containing $u_a$.
\Needspace{14\baselineskip}
Thus every factor $|1+z_a|$ cancels.  The remaining power of $2$ simplifies
as follows:
\begin{align*}
r-2r^2\tau+2\tau\binom r2
&=r-2r^2\tau+2\tau\frac{r(r-1)}2\\
&=r-2r^2\tau+r(r-1)\tau\\
&=r-2r^2\tau+(r^2-r)\tau\\
&=r-r^2\tau-r\tau\\
&=r-r(r+1)\tau\\
&=r-rm\tau,
\end{align*}
because $m=r+1$.  Consequently
\begin{equation}\label{eq:J-circle}
J_r(\tau)=2^{r-rm\tau}
\int_{(-\pi,\pi)^r}
\prod_{a=1}^{r}|1-z_a|^{2\tau-2}
\prod_{a<b}|z_a-z_b|^{2\tau}\dd\theta.
\end{equation}

We apply the Morris integral in the normalization of
\cite[(1.17)--(1.18)]{ForresterWarnaar}:
\begin{align}
&\frac1{(2\pi)^r}
\int_{(-\pi,\pi)^r}
\prod_{a=1}^{r}
\e^{\frac{\ii}{2}(A-B)\theta_a}|1+z_a|^{A+B}
\prod_{a<b}|z_a-z_b|^{2\gamma}\dd\theta
\notag\\
&\qquad=
\prod_{j=0}^{r-1}
\frac{\Gamma(1+A+B+j\gamma)\Gamma(1+(j+1)\gamma)}
{\Gamma(1+A+j\gamma)\Gamma(1+B+j\gamma)\Gamma(1+\gamma)}.
\label{eq:morris}
\end{align}
Its initial convergence conditions are
\[
\operatorname{Re}(A+B+1)>0,
\qquad
\operatorname{Re}\gamma>-\min\left\{\frac1r,
\frac{\operatorname{Re}(A+B+1)}{r-1}\right\},
\]
when $r>1$; for $r=1$, only the first condition is needed.  We take
\begin{equation}\label{eq:morris-sub}
A=B=\tau-1,
\qquad \gamma=\tau.
\end{equation}
For $\operatorname{Re}s>0$, we have $\operatorname{Re}\tau>2$, so all these
conditions hold.  The phase
in \eqref{eq:morris} disappears, and a rotation $z_a\mapsto-z_a$ replaces
$|1+z_a|$ by $|1-z_a|$.  Thus
\begin{align}
&\int_{(-\pi,\pi)^r}
\prod_a|1-z_a|^{2\tau-2}
\prod_{a<b}|z_a-z_b|^{2\tau}\dd\theta
\notag\\
&\quad=(2\pi)^r\prod_{j=0}^{r-1}
\frac{\Gamma((j+2)\tau-1)\Gamma(1+(j+1)\tau)}
{\Gamma((j+1)\tau)^2\Gamma(1+\tau)}.
\label{eq:morris-specialized}
\end{align}
\begin{samepage}
Using $\Gamma(1+z)=z\Gamma(z)$, we compute
\begin{align}
\prod_{j=0}^{r-1}
\frac{\Gamma(1+(j+1)\tau)}
{\Gamma((j+1)\tau)^2\Gamma(1+\tau)}
={}&\prod_{j=0}^{r-1}
\frac{(j+1)\tau\,\Gamma((j+1)\tau)}
{\Gamma((j+1)\tau)^2\,\tau\Gamma(\tau)}
\notag\\
={}&\prod_{j=0}^{r-1}
\frac{j+1}{\Gamma((j+1)\tau)\Gamma(\tau)}
\notag\\
={}&\frac{r!}{\Gamma(\tau)^r\prod_{j=1}^{r}\Gamma(j\tau)}.
\label{eq:morris-cancel}
\end{align}
\end{samepage}
Substitution into \eqref{eq:Z-J} proves the following theorem.

\begin{theorem}
\label{thm:zeta-formula}
For $m\geq1$, with $r=m-1$ and $\tau=s+2$, the meromorphic continuation of
\eqref{eq:Z-def} is
\begin{equation}\label{eq:zeta-formula}
\cZ_m(s)=
2^{r-rm\tau}(2\pi)^r\Gamma(\tau-1)
\prod_{j=2}^{m}
\frac{\Gamma(j\tau-1)}{\Gamma((j-1)\tau)}.
\end{equation}
For $m=1$, the formula is interpreted as
$\cZ_1(s)=\Gamma(s+1)$.
\end{theorem}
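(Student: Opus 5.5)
The proof assembles the pieces already in place. For $m\geq2$ and $\operatorname{Re}s>0$ we start from \eqref{eq:Z-J}, which expresses $\cZ_m(s)$ as $\Gamma(\tau-1)\Gamma(\tau)^r J_r(\tau)/r!$. We then substitute the circle form \eqref{eq:J-circle} of $J_r(\tau)$. The Morris evaluation \eqref{eq:morris-specialized}, simplified by \eqref{eq:morris-cancel}, gives
\[
J_r(\tau)=2^{r-rm\tau}(2\pi)^r\, r!\,
\frac{\prod_{j=0}^{r-1}\Gamma((j+2)\tau-1)}{\Gamma(\tau)^r\prod_{j=1}^{r}\Gamma(j\tau)}.
\]
In the product $\Gamma(\tau-1)\Gamma(\tau)^rJ_r(\tau)/r!$, the factors $r!$ and $\Gamma(\tau)^r$ cancel. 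Reindexing $j+2\mapsto j$ in the numerator and $j\mapsto j-1$ in the denominator turns the remaining quotient into $\prod_{j=2}^{m}\Gamma(j\tau-1)/\Gamma((j-1)\tau)$, which is \eqref{eq:zeta-formula}. We should confirm that the exponent of $2$ matches the introduction: $r-rm\tau=m-1-m(m-1)(s+2)$, and $\Gamma(\tau-1)=\Gamma(s+1)$.

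Next comes the validity of the computation on the initial half-plane. Every step, namely Fubini, the finite-covering change of variables via Theorem~\ref{thm:radau} and Corollary~\ref{cor:factorization}, the Laplace integrals in the $w_a$, and the half-angle substitution, was carried out for $\operatorname{Re}s>0$ under absolute convergence. The Morris convergence conditions hold there, since $\operatorname{Re}\tau>2$. So the identity holds as an equality of holomorphic functions on $\operatorname{Re}s>0$.

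The statement speaks of \emph{the} meromorphic continuation. The right-hand side is a finite product of Gamma functions and reciprocal Gamma functions in $s$, so it is meromorphic on $\C$. By the identity theorem it is the unique meromorphic continuation of the left-hand side; no appeal to Bernstein's theorem is needed for this part. For $m=1$ we have $\Omega_1=(0,\infty)$ and $\ell_1=x_0$, so $\cZ_1(s)=\int_0^\infty \e^{-x_0}x_0^s\dd x_0=\Gamma(s+1)$. This agrees with the formula once the empty product and $r=0$ are read correctly.

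I expect the main obstacle to be bookkeeping rather than any conceptual difficulty. The powers of $2$ must be tracked, and the $r!$ from the covering must cancel exactly against the $r!$ produced in \eqref{eq:morris-cancel}. The Morris normalization must also be applied correctly: with $A=B=\tau-1$ the phase disappears, and the rotation $z_a\mapsto -z_a$ preserves both Haar measure on $(-\pi,\pi)^r$ and the factor $|z_a-z_b|$. A quick check at $m=2$ is worthwhile, evaluating $J_1$ directly as a Beta integral.
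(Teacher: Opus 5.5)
Your proposal is correct and is essentially the paper's own proof. On $\operatorname{Re}s>0$ you chain \eqref{eq:Z-J}, \eqref{eq:J-circle}, \eqref{eq:morris-specialized} and \eqref{eq:morris-cancel}, cancel $r!\,\Gamma(\tau)^r$ and reindex; your intermediate formula for $J_r(\tau)$ and the reindexing both check out. You then identify the right-hand side, meromorphic on $\C$, with the continuation via the identity theorem, and treat $m=1$ directly as $\Gamma(s+1)$, exactly as the paper does.
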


\begin{proof}
For $m\geq2$, equations
\eqref{eq:Z-coordinates}--\eqref{eq:morris-cancel} prove
\eqref{eq:zeta-formula} in the nonempty open half-plane
$\operatorname{Re}s>0$.  The right-hand side is meromorphic on $\C$, so it is
the meromorphic continuation of the integral.  If $m=1$, then
$\Omega_1=(0,\infty)$ and
$f_1=\ell_1=x_0$, which gives the stated Gamma integral directly.
\end{proof}

\subsection{The primitive poles and angular noncancellation}

For $m\geq2$, define the set of proposed positive root exponents of exact
denominator $m$ by
\begin{equation}\label{eq:primitive-exponents}
\cR_m=
\left\{2-\frac1m\right\}
\cup
\left\{2+\frac{k}{m}:1\leq k\leq m-2,\ (k,m)=1\right\}.
\end{equation}
Its fractional parts run through all primitive residue classes modulo $m$, and
$|\cR_m|=\varphi(m)$.

The last numerator in \eqref{eq:zeta-formula} is
\begin{equation}\label{eq:radial-last}
\Gamma(m\tau-1)=\Gamma(ms+N),
\end{equation}
which agrees with the radial factor in \eqref{eq:polar}.  Dividing it out gives
the meromorphic continuation of the angular factor:
\begin{equation}\label{eq:A-formula}
\cA_m(s)=
2^{r-rm\tau}(2\pi)^r\Gamma(\tau-1)
\left[\prod_{j=2}^{m-1}
\frac{\Gamma(j\tau-1)}{\Gamma((j-1)\tau)}\right]
\frac1{\Gamma((m-1)\tau)}.
\end{equation}

\begin{prop}
\label{prop:no-cancellation}
For every $\alpha\in\cR_m$, the function $\cA_m(s)$ is finite and
nonzero at $s=-\alpha$, while $\Gamma(ms+N)$ has a simple pole there.
Consequently $\cZ_m(s)$ has a genuine simple pole at every
$s=-\alpha$, $\alpha\in\cR_m$.
\end{prop}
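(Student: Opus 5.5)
The plan is a direct bookkeeping argument on the explicit Gamma formula \eqref{eq:A-formula}, using only that $\Gamma$ has no zeros and has simple poles exactly at the nonpositive integers. Fix $\alpha\in\cR_m$ and set $s=-\alpha$, so $\tau=2-\alpha$. Write $\alpha=2+\kappa/m$, where $\kappa=-1$ or $\kappa=k$ with $1\le k\le m-2$ and $(k,m)=1$. Then $\tau=-\kappa/m$, and $\kappa$ is coprime to $m$ in both cases; for $\kappa=-1$ this is trivial. In particular $j\tau\notin\Z$ for every $1\le j\le m-1$.

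\emph{Radial pole.} We have $ms+N=m\tau-1=-\kappa-1$. This equals $0$ when $\kappa=-1$ and equals $-(k+1)\in\Z_{<0}$ otherwise. Hence $\Gamma(ms+N)$ has a simple pole at $s=-\alpha$, and in particular this point is a nonpositive integer for the radial Gamma factor.

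\emph{Angular factor.} Examine each factor of \eqref{eq:A-formula} at $\tau=-\kappa/m$.
\begin{itemize}
\item The exponential $2^{r-rm\tau}$ and the constant $(2\pi)^r$ are finite and nonzero.
\item $\Gamma(\tau-1)$ has argument $-\kappa/m-1\notin\Z$, since $m\ge2$ and $(\kappa,m)=1$. It is therefore finite and nonzero.
\item For $2\le j\le m-1$, the numerator $\Gamma(j\tau-1)$ has argument $j\tau-1$. This is not an integer because $j\tau\notin\Z$, as $m\nmid j\kappa$. So it is finite and nonzero.
\item The denominators $\Gamma((j-1)\tau)$, $2\le j\le m-1$, together with the final $1/\Gamma((m-1)\tau)$, have arguments $i\tau$ for $1\le i\le m-1$. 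None of these is an integer, so each Gamma value is finite and nonzero. Their reciprocals are then finite and nonzero, and the absence of zeros of $\Gamma$ is what makes each factor nonzero.
\end{itemize}
Thus $\cA_m(-\alpha)$ is finite and nonzero. When $m=2$ the middle product is empty and the argument specializes to the factors $\Gamma(\tau-1)/\Gamma(\tau)$.

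\emph{Conclusion.} By \eqref{eq:polar}, equivalently by \eqref{eq:zeta-formula} and \eqref{eq:radial-last}, $\cZ_m=\Gamma(ms+N)\cA_m$ as meromorphic functions. A simple pole multiplied by a function that is holomorphic and nonvanishing at the point is again a simple pole, with nonzero residue $\operatorname{Res}\Gamma(ms+N)\cdot\cA_m(-\alpha)$.

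There is no real obstacle. The only care needed is the non-integrality check $m\nmid j\kappa$ for $1\le j\le m-1$, which uses coprimality, and confirming that $\Gamma(\tau-1)$ is not at a pole. One must also make sure that the continuation of the integral $\cA_m$ coincides with the Gamma expression \eqref{eq:A-formula}. That follows by dividing \eqref{eq:zeta-formula} by $\Gamma(ms+N)$ on the half-plane $\operatorname{Re}s>0$ and invoking uniqueness of meromorphic continuation.
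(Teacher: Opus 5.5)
Your proposal is correct and follows the paper's proof. Both show that $m\tau-1$ is a nonpositive integer at $\tau=-\kappa/m$, and both use coprimality with $m$ to check that no Gamma argument in \eqref{eq:A-formula} is an integer. Your only additions are cosmetic: treating the two cases together through $\kappa\in\{-1\}\cup\{k\}$, and stating explicitly that the continuation of $\cA_m$ is obtained by dividing by $\Gamma(ms+N)$ on $\operatorname{Re}s>0$.
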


\begin{proof}
If $\alpha=2-1/m$, then $\tau=1/m$ and $m\tau-1=0$, so the radial factor has
a simple pole.  Every Gamma argument in \eqref{eq:A-formula} is congruent
modulo $\Z$ to $a/m$ for some $1\leq a\leq m-1$, and hence is nonintegral.
The angular factor is therefore finite and nonzero.

If $\alpha=2+k/m$, where $1\leq k\leq m-2$ and $(k,m)=1$, then
$\tau=-k/m$ and $m\tau-1=-k-1$, again a radial pole.  Any pole or zero of the
Gamma quotient in \eqref{eq:A-formula} would require one of
$\tau-1$, $j\tau-1$, $(j-1)\tau$ with $2\leq j<m$, or
$(m-1)\tau$ to be an integer.  Since $(k,m)=1$, this would force $m$ to divide
one of $1,j,j-1,m-1$, which is impossible in the displayed ranges.  Thus the
angular factor is again finite and nonzero.  Since Gamma has no zeros and all
of its poles are simple, the asserted poles of $\cZ_m$ are genuine and
simple.
\end{proof}

Here $\Gamma(ms+N)$ is the contribution of the vertex divisor, while
\eqref{eq:A-formula} accounts for the directions on that divisor.
Proposition~\ref{prop:no-cancellation} shows that the primitive poles survive.

\section{From zeta poles to roots at exact positions}\label{sec:poles-roots}

\subsection{The pole--root implication and its integer shift}

An Archimedean zeta pole determines a Bernstein--Sato root only up to a
nonnegative integral shift.  The branchwise statement below, together with
\eqref{eq:N-over-m}, will determine the translate.

The proposition is stated for a single connected component of $(f>0)$.  We
write $C_c^\infty(\R^N)$ for the space of smooth, compactly supported test
functions on $\R^N$.

\begin{prop}
\label{prop:pole-root}
Let $f\in\R[x_1,\ldots,x_N]$ be nonzero, let $\Omega$ be a connected
component of $(f>0)$, and let $\phi\in C_c^\infty(\R^N)$.  The integral
\[
Z_{\Omega,\phi}(s)=\int_\Omega\phi(x)f(x)^s\dd x,
\qquad \operatorname{Re}s>0,
\]
has a meromorphic continuation.  If $s_0$ is a pole, then
\begin{equation}\label{eq:pole-root}
b_f(s_0+j)=0
\qquad\text{for some }j\in\Z_{\geq0}.
\end{equation}
The same conclusion holds with the local Bernstein--Sato polynomial when the
support of $\phi$ is contained in the corresponding coordinate neighborhood.
\end{prop}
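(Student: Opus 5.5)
The argument is Bernstein's classical one, run on a single branch $\Omega$. Write the functional equation \eqref{eq:bernstein-equation-intro} as $P(s)f^{s+1}=b_f(s)f^s$ with $P(s)\in D_N[s]$. Since $f>0$ on $\Omega$, the function $f^s$ is smooth and single-valued there, and the identity holds pointwise on $\Omega$ as an identity of genuine functions. Pair both sides with $\phi$ and integrate over $\Omega$. For $\operatorname{Re}s$ large we may integrate by parts, moving $P(s)$ onto $\phi$ as its formal adjoint $P^*(s)$, and obtain
\[
b_f(s)\,Z_{\Omega,\phi}(s)=Z_{\Omega,P^*(s)\phi}(s+1).
\]
Here $P^*(s)\phi$ is again in $C_c^\infty(\R^N)$ and depends polynomially on $s$.

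The only delicate step is that the boundary terms vanish. The boundary $\partial\Omega$ lies in $(f=0)$, and the integration by parts takes place on the open set $\Omega$, not on $\R^N$. I would handle this by working on all of $\R^N$ with the function $f_+^{s}\mathbf 1_\Omega$. For $\operatorname{Re}s>k$ this function is of class $C^{k}$ on $\R^N$: it is $C^k$ inside $\Omega$, and all its derivatives up to order $k$ are $O(|f|^{\operatorname{Re}s-k})$ times polynomials near $\partial\Omega$, so they extend by zero continuously. Indeed, the closure of $\Omega$ meets the rest of $(f>0)$ only through $(f=0)$, since $\Omega$ is a connected component. The pointwise identity $P(s)(f^{s+1}\mathbf 1_\Omega)=b_f(s)f^s\mathbf 1_\Omega$ then holds on $\R^N$ off the null set $(f=0)$ as identities of continuous functions, hence distributionally. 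Integration by parts against the compactly supported $\phi$ is therefore legitimate once $\operatorname{Re}s$ exceeds the order of $P$.

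Iterating the identity $n$ times gives
\[
Z_{\Omega,\phi}(s)=\frac{Z_{\Omega,\phi_n(s)}(s+n)}{b_f(s)b_f(s+1)\cdots b_f(s+n-1)},
\]
where $\phi_n(s)=P^*(s+n-1)\cdots P^*(s)\phi$. The numerator is holomorphic for $\operatorname{Re}s>-n$, because it is an absolutely convergent integral of a compactly supported bounded function times $f^{s+n}$, holomorphic by differentiation under the integral. This yields the meromorphic continuation to all of $\C$, with poles only where some $b_f(s+j)$ vanishes for $0\le j\le n-1$. Hence any pole $s_0$ satisfies \eqref{eq:pole-root}.

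For the local version, replace the global functional equation by the local one $P_x(s)f^{s+1}=b_{f,x}(s)f^s$, with $P_x$ having analytic coefficients on a neighborhood $V$ of $x$. If $\operatorname{supp}\phi\subset V$, the same integration by parts goes through, because $P_x^*(s)\phi$ remains smooth and compactly supported in $V$. The identical argument then gives poles only at $s_0$ with $b_{f,x}(s_0+j)=0$.
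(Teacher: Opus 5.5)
Your proposal is correct and follows essentially the same route as the paper: pair the Bernstein identity with $\phi$ on the single branch $\Omega$, use $\partial\Omega\subset(f=0)$ to move $P(s)$ onto $\phi$ for $\operatorname{Re}s\gg0$, and iterate so that $\prod_{j<K}b_f(s+j)\,Z_{\Omega,\phi}(s)$ becomes holomorphic near $s_0$. The only difference is that you justify the integration by parts and the continuation by hand, via the $C^k$-regularity of $f^{s}\mathbf 1_\Omega$, whereas the paper cites Bernstein's region-distribution theorem for both; in your version, differentiability at points $p\in\partial\Omega$ should be deduced from the estimate $|f(x)|\le C|x-p|$, because continuity of the extended derivatives alone does not imply it.
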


\begin{proof}
Because $\Omega$ is a connected component of $(f>0)$, one has
$\partial\Omega\subset(f=0)$.  Bernstein's region-distribution construction
applies to the function equal to $f^s$ on $\Omega$ and to zero outside
$\Omega$.  Theorem~1 gives its meromorphic continuation, while Lemma~2.1
states that algebraic differential identities for formal powers of $f$
pass to this extended family of distributions
\cite[Theorem 1 and Lemma 2.1]{Bernstein}.  This statement is componentwise:
the chosen region is treated separately, rather than only after summing over
all components of $\R^N\setminus(f=0)$.

Fix an algebraic Bernstein identity
$P(s)f^{s+1}=b_f(s)f^s$ and write
\[
 P(s)=\sum_{|\gamma|\leq d}a_\gamma(x,s)\partial_x^\gamma,
 \qquad
 P(s)^t=\sum_{|\gamma|\leq d}(-\partial_x)^\gamma\circ
 a_\gamma(x,s).
\]
Here $P(s)^t$ is the formal transpose.  For a test function
$\phi\in C_c^\infty(\R^N)$, the region-distribution theorem gives, initially
for $\operatorname{Re}s\gg0$,
\begin{equation}\label{eq:distributional-bernstein-pairing}
 b_f(s)\int_\Omega f^s\phi\,\dd x
 =\int_\Omega P(s)(f^{s+1})\phi\,\dd x
 =\int_\Omega f^{s+1}P(s)^t\phi\,\dd x.
\end{equation}
The first equality uses the algebraic Bernstein identity on $\Omega$; only
the second involves integration by parts.  For each multi-index $\gamma$, the
second equality is obtained by applying one-variable integration by parts
successively in the coordinate directions specified by
$\partial_x^\gamma$.  This is the usual local calculation at a regular
boundary point: since $f=0$ there and $\operatorname{Re}s$ is initially
sufficiently large, all boundary terms arising from the finitely many
integrations vanish.  At a singular boundary point, Bernstein's theorem
establishes the same identity directly for the extension by zero, without
invoking a smooth-boundary formula.  The inclusion
$\partial\Omega\subset(f=0)$ is essential here: it ensures that no artificial
boundary lies inside $(f\neq0)$, while the compact support of $\phi$ removes
boundary terms at infinity.  The theorem therefore treats each component and
the singular part of its boundary without requiring a resolution.

Both sides of \eqref{eq:distributional-bernstein-pairing} are meromorphic
families supplied by Bernstein's theorem.  Since they agree in a sufficiently
far right half-plane, the identity theorem extends the equality to all $s$.

Apply the integral identity successively at $s,s+1,\ldots,s+K-1$, replacing
the test function at each step by its image under the preceding formal
transposes.  This gives
\begin{equation}\label{eq:iterated-Bernstein}
\left(\prod_{j=0}^{K-1}b_f(s+j)\right)
 Z_{\Omega,\phi}(s)
=\int_\Omega f^{s+K}
 P(s+K-1)^t\cdots P(s)^t\phi\,\dd x.
\end{equation}
For a fixed $s_0$, choose $K$ so large that $\operatorname{Re}(s+K)>0$ near
$s_0$.  The last integral is then holomorphic there: the transformed test
function remains smooth and compactly supported, with coefficients depending
polynomially on $s$.

If every $b_f(s_0+j)$, $0\leq j<K$, were nonzero, the scalar factor on the
left of \eqref{eq:iterated-Bernstein} would be invertible near $s_0$.
Consequently $Z_{\Omega,\phi}$ would be holomorphic there, contradicting the
assumption that $s_0$ is a pole.  This proves \eqref{eq:pole-root}.  Using a
local Bernstein equation throughout proves the final assertion.
\end{proof}

Only the implication in Proposition~\ref{prop:pole-root} is used here.  Its
converse is false for a fixed branch and test function: the Bernstein
polynomial provides a universal algebraic denominator, whereas residues may
vanish after restriction to a real branch or pairing with a particular test
function.  A discussion of the same inclusion for Archimedean local zeta
functions can be found in
\cite[Section 2.2]{Denef} and \cite[Section 5.3]{Igusa}.

For comparison, a whole-space zeta integral decomposes, before meromorphic
continuation, into its sign components:
\[
\int_{\R^N}\phi|f|^s\dd x
=\sum_{\Omega_i\subset(f>0)}Z_{\Omega_i,\phi}(s)
+\sum_{\Omega_j^-\subset(f<0)}
\int_{\Omega_j^-}\phi(-f)^s\dd x.
\]
This identity does not imply that the pole set of the sum contains the pole
set of every summand, since principal parts can cancel.
Proposition~\ref{prop:pole-root} gives the stronger, logically independent
statement that the same Bernstein--Sato polynomial controls each component
distribution separately.  In our application $\Omega_m$ is one such
component, and its boundary consists of singular positive semidefinite Hankel
matrices, on which $f_m=0$.

\subsection{Localization of the cone integral at the vertex}

The exponential in \eqref{eq:Z-def} is not compactly supported, so
Proposition~\ref{prop:pole-root} does not apply directly.  Each
primitive pole is detected by a compactly supported test function in an
arbitrarily small neighborhood of the origin.  The construction preserves the
polar-coordinate factorization and leaves the relevant radial principal parts
unchanged.

\begin{prop}
\label{prop:localized-poles}
Let $W\subset\R^N$ be any open neighborhood of the origin.  For every
$\alpha\in\cR_m$, there exists
$\phi\in C_c^\infty(W)$ such that the branch zeta integral
\[
 \int_{\Omega_m}\phi(x)f_m(x)^s\dd x
\]
has a genuine simple pole at $s=-\alpha$.
\end{prop}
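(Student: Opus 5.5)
The plan is to choose a radial cutoff, so that the polar factorization \eqref{eq:polar} survives and the angular factor $\cA_m(s)$ stays untouched. Fix $\chi\in C_c^\infty(\R)$ with $\chi\equiv1$ on $[0,\delta/2]$ and $\operatorname{supp}\chi\subset(-\delta,\delta)$. Take $\phi(x)=\chi(\ell_m(x))\,\e^{-\ell_m(x)}$ on a neighborhood of $\wb\Omega_m$, or simply $\phi(x)=\chi(\ell_m(x))$. By Lemma~\ref{lem:ell-coercive}, $\wb\Omega_m\cap\{\ell_m\leq\delta\}$ lies in the ball of radius $C_m\delta$. The function $\chi\circ\ell_m$ is not compactly supported on $\R^N$, so we multiply by a cutoff $\eta\in C_c^\infty(W)$ with $\eta\equiv1$ on that ball. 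Choosing $\delta$ small makes the ball lie inside $W$. Because only the restriction of $\phi$ to $\Omega_m$ enters the integral, we get $\phi|_{\Omega_m}=\chi(\ell_m)|_{\Omega_m}$ with $\phi\in C_c^\infty(W)$.

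Next we compute. By \eqref{eq:cone-measure} and homogeneity, for $\operatorname{Re}s>0$,
\[
 \int_{\Omega_m}\phi f_m^s\dd x=\cA_m(s)\,G(s),
 \qquad G(s)=\int_0^\infty\chi(\rho)\rho^{ms+N-1}\dd\rho .
\]
To continue $G$, split it as $\int_0^{\delta/2}\rho^{ms+N-1}\dd\rho+\int_{\delta/2}^\infty\chi(\rho)\rho^{ms+N-1}\dd\rho$. The second integral is entire in $s$. The first equals $(\delta/2)^{ms+N}/(ms+N)$. So $G$ is meromorphic on $\C$, with a single simple pole at $ms+N=0$, that is, at $s=-N/m=-(2-1/m)$. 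This handles $\alpha=2-1/m$. By Proposition~\ref{prop:no-cancellation}, $\cA_m$ is finite and nonzero there. Note that $\cA_m$ is meromorphic, since it equals $\cZ_m/\Gamma(ms+N)$ as in \eqref{eq:A-formula}, and the product continues the integral by uniqueness of continuation. The product therefore has a genuine simple pole.

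The main obstacle is the other exponents $\alpha=2+k/m$. There $ms+N=-k$, and the radial Mellin transform of the constant cutoff has no pole at negative integers. To produce poles at $ms+N=-n$, the Taylor coefficient of order $n$ of the radial profile at $\rho=0$ must be nonzero, exactly as with $\e^{-\rho}$ in $\Gamma$. So I would use the profile $\chi(\rho)\e^{-\rho}$, or more simply $\chi(\rho)\rho^{n}$, which is smooth on $\wb\Omega_m$ since $\rho=\ell_m(x)$ is linear. With $g(\rho)=\chi(\rho)\e^{-\rho}$, repeated integration by parts (or subtracting the Taylor polynomial on $[0,\delta/2]$) gives the continuation of $G_g(s)=\int_0^\infty g(\rho)\rho^{ms+N-1}\dd\rho$. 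It has at most simple poles at $ms+N=-n$, with residue $g^{(n)}(0)/(n!\,m)$. For $g$ equal to $\e^{-\rho}$ near $0$, this residue is $(-1)^n/(n!\,m)\neq0$, the same as the principal part of $\Gamma(ms+N)$ divided by $m$. Hence at $s=-\alpha$ with $n=k$, the integral $\int_{\Omega_m}\eta(x)\chi(\ell_m(x))\e^{-\ell_m(x)}f_m^s\dd x=\cA_m(s)G_g(s)$ has a simple pole. Its residue is a nonzero multiple of $\cA_m(-\alpha)\neq0$, by Proposition~\ref{prop:no-cancellation}. The same single test function works for all $\alpha\in\cR_m$.

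Two checks remain, and both are routine. First, Fubini in polar coordinates is justified in $\operatorname{Re}s>0$ because $\phi$ is bounded with compact support. Second, $\cA_m(s)$ is the same meromorphic function as in \eqref{eq:A-formula}. This holds because the identity $\cZ_m=\Gamma(ms+N)\cA_m$ holds on the half-plane, and $\cA_m$ as an integral is holomorphic there, so it has a unique continuation.
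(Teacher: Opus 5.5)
Your argument is correct and is essentially the paper's proof. It uses the same test function $\eta\,\chi(\ell_m)\e^{-\ell_m}$, the same factorization into a cut-off radial Mellin transform times $\cA_m(s)$, and Proposition~\ref{prop:no-cancellation} for the angular factor; your Taylor-subtraction continuation plays the role of the paper's observation that $\Gamma_R-\Gamma$ is entire. One harmless slip: at $\alpha=2+k/m$ one has $ms+N=-k-1$ (not $-k$), so the relevant Taylor order is $n=k+1$, which changes nothing since every derivative of $\e^{-\rho}$ at $0$ is nonzero.
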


\begin{proof}
Let $W\subset\R^N$ be an arbitrary open neighborhood of $0$.  Choose
$\delta>0$ such that the closed ball $\{\|x\|\leq\delta\}$ is contained in
$W$, and then choose
\begin{equation}\label{eq:R-smallness}
 0<R<\frac{3\delta}{2C_m},
\end{equation}
where $C_m$ is the constant in \eqref{eq:ell-norm}.  Take
$\chi_R\in C_c^\infty(\R)$ with $0\leq\chi_R\leq1$ and
\[
\chi_R(\rho)=1\quad(|\rho|\leq R/3),
\qquad
\chi_R(\rho)=0\quad(|\rho|\geq2R/3).
\]
By Lemma~\ref{lem:ell-coercive},
\[
K_R=\wb\Omega_m\cap\{0\leq\ell_m\leq2R/3\}
\]
is compact, and \eqref{eq:ell-norm} and \eqref{eq:R-smallness} give
\[
 K_R\subseteq\{\|x\|\leq2C_mR/3\}
 \subseteq\{\|x\|<\delta\}\subseteq W.
\]
The smooth Urysohn lemma therefore supplies
$\eta_R\in C_c^\infty(\R^N)$ with $\operatorname{supp}\eta_R\subseteq W$
which is equal to $1$ on an open neighborhood of $K_R$.  Define
\begin{equation}\label{eq:phi-R}
\phi_R(x)=\eta_R(x)\chi_R(\ell_m(x))\e^{-\ell_m(x)}.
\end{equation}
This is a smooth compactly supported function with
$\operatorname{supp}\phi_R\subseteq W$.  Moreover, if $x\in\Omega_m$ and
$\chi_R(\ell_m(x))\neq0$, then $x\in K_R$, so $\eta_R(x)=1$.  The auxiliary
factor $\eta_R$ therefore makes the test function compactly supported without
changing its values on the part of the cone retained by the radial cutoff.

In the polar coordinates of
\eqref{eq:polar}, write $x=\rho y$, where
$\rho=\ell_m(x)>0$ and $y\in\Sigma_m$.  Then
\[
 f_m(\rho y)=\rho^m f_m(y),
 \qquad
 \dd x=\rho^{N-1}\dd\rho\dd\nu(y),
 \qquad
 \ell_m(\rho y)=\rho.
\]

For $\operatorname{Re}s>0$, absolute convergence and Fubini's theorem now give
\begin{align}\label{eq:localized-zeta}
\cZ_{m,R}(s)
&:=\int_{\Omega_m}\phi_R(x)f_m(x)^s\dd x \notag\\
&=\int_{\Sigma_m}\int_0^\infty
 \chi_R(\rho)\e^{-\rho}
 \rho^{ms+N-1} f_m(y)^s\dd\rho\dd\nu(y) \notag\\
&=\left(\int_0^\infty
 \chi_R(\rho)\e^{-\rho}\rho^{ms+N-1}\dd\rho\right)
 \left(\int_{\Sigma_m}f_m(y)^s\dd\nu(y)\right) \notag\\
&=\Gamma_R(ms+N)\cA_m(s),
\end{align}
where
\begin{equation}\label{eq:cutoff-gamma}
\Gamma_R(z)=\int_0^\infty
\chi_R(\rho)\e^{-\rho}\rho^{z-1}\dd\rho.
\end{equation}
The second line uses the construction of $\eta_R$, which gives
$\eta_R(\rho y)=1$ whenever
$\chi_R(\rho)\neq0$.  The power $\rho^{ms+N-1}$ is the product of
$\rho^{ms}$ from the homogeneity of $f_m^s$ and $\rho^{N-1}$ from the
Jacobian.  The remaining factors separate into a function of $\rho$ and a
function of $y$, so Fubini gives the product in the third line.  Its two
factors are $\Gamma_R(ms+N)$ and the angular integral
$\cA_m(s)$ from \eqref{eq:polar}.  This identity is initially valid for
$\operatorname{Re}s>0$; meromorphic continuation follows from the comparison
below.

For $\operatorname{Re}z>0$,
\begin{equation}\label{eq:cutoff-gamma-entire}
\Gamma_R(z)-\Gamma(z)
=-\int_0^\infty
(1-\chi_R(\rho))\e^{-\rho}\rho^{z-1}\dd\rho
\end{equation}
and the right-hand side extends to an entire function of $z$.  Indeed,
$1-\chi_R$ vanishes for
$0<\rho<R/3$, so there is no singularity at $\rho=0$.  Fix a compact set
$K\subset\C$ and a derivative order $k\geq0$, and choose $M>0$ such that
$|\operatorname{Re}z-1|\leq M$ on $K$.  On $[R/3,\infty)$, the absolute
value of the $k$-th $z$-derivative of the integrand is bounded uniformly for
$z\in K$ by an integrable function of the form
\[
 C_K\e^{-\rho}\bigl(\rho^M+\rho^{-M}\bigr)
 \bigl(1+|\log\rho|^k\bigr).
\]
Differentiation under the integral sign is therefore valid to every order, so
the difference in \eqref{eq:cutoff-gamma-entire} is entire.  Consequently
$\Gamma_R$ extends meromorphically and has the same Laurent principal part as
$\Gamma$ at every pole of $\Gamma$.  Together with the continuation of
$\cA_m$ in \eqref{eq:A-formula}, this gives the meromorphic continuation of
\eqref{eq:localized-zeta}.

For $\alpha\in\cR_m$, the number $N-m\alpha$ is $0$ or $-k-1$, hence a
nonpositive integer.  By \eqref{eq:cutoff-gamma-entire},
$\Gamma_R(ms+N)$ therefore has a simple pole at $s=-\alpha$ with the same nonzero
principal coefficient as $\Gamma(ms+N)$.  By
Proposition~\ref{prop:no-cancellation}, $\cA_m$ is holomorphic and nonzero there,
so their product in \eqref{eq:localized-zeta} has a genuine simple pole.
Because $W$ was arbitrary, every neighborhood of the vertex admits a test
function supported in it that detects this pole; equivalently, $-\alpha$ is a
pole of the local branch zeta distribution at $0$.
\end{proof}

We also need the following standard fact for homogeneous polynomials.

\begin{lemma}
\label{lem:homogeneous-localization}
If $f$ is a nonzero homogeneous polynomial on a complex vector space, then
its global Bernstein--Sato polynomial equals its local polynomial at the
origin:
\[
 b_f(s)=b_{f,0}(s).
\]
\end{lemma}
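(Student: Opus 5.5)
The plan is to prove the two divisibilities $b_{f,x}\mid b_{f,0}$ for every point $x$, and $b_{f,0}\mid b_f$. The global polynomial is the least common multiple of the local polynomials $b_{f,x}(s)$ over all $x\in\C^N$; this is the fact already used after Theorem~\ref{thm:roots-give-monodromy}. The second divisibility is immediate, since a global functional equation $P(s)f^{s+1}=b_f(s)f^s$ with $P(s)\in D_N[s]$ restricts to a germ equation at $0$. So the content is the first one: every local polynomial divides the one at the origin.

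For this we use the $\C^\times$-action $x\mapsto\lambda x$. Let $d=\deg f$. Since $f(\lambda x)=\lambda^d f(x)$, the automorphism $\sigma_\lambda$ of $D_N[s]$ given by $x_i\mapsto\lambda x_i$, $\partial_i\mapsto\lambda^{-1}\partial_i$, $s\mapsto s$ transforms a local Bernstein equation at $x$ into one at $\lambda^{-1}x$ with the same $b$. Indeed $f^{s}$ only picks up the constant factor $\lambda^{ds}$ on both sides, which cancels. Hence $b_{f,x}=b_{f,\lambda x}$ for all $\lambda\in\C^\times$.

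Next we use upper semicontinuity in the form: for each point $p$ there is a neighborhood $U$ of $p$ with $b_{f,y}\mid b_{f,p}$ for every $y\in U$. To see this, take an analytic germ identity $P(s)f^{s+1}=b_{f,p}(s)f^s$ at $p$. The operator $P(s)$ has coefficients convergent on some ball $U$ around $p$, and the identity of holomorphic expressions persists there. So $b_{f,p}$ lies in the ideal defining $b_{f,y}$ for every $y\in U$. (If one prefers algebraic local $b$-functions, one instead uses a Zariski-local identity with denominators not vanishing at $p$, which works equally well.)

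To conclude, fix $x\neq0$, apply this at $p=0$ to obtain $U$, and pick $\lambda$ small with $\lambda x\in U$. Then $b_{f,x}=b_{f,\lambda x}$ divides $b_{f,0}$, so $\operatorname{lcm}_x b_{f,x}=b_{f,0}$.

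The only delicate step is checking that the local polynomial at a point is given by a germ equation valid on a whole neighborhood, and that the analytic and algebraic local notions agree for the purposes of the least common multiple. I would handle both by citing the standard description $b_f=\operatorname{lcm}_x b_{f,x}$ (with analytic local $b$-functions) together with the openness argument above. The homogeneity-rescaling step is a routine check of how $\sigma_\lambda$ acts on $f^s$.
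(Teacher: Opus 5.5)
Your proposal is correct and follows essentially the same route as the paper: a Bernstein equation for $b_{f,0}$ persists on a neighborhood $U_0$ of the origin, a scalar dilation carries any point into $U_0$ while changing $f$ only by a constant, so $b_{f,x}\mid b_{f,0}$, and $b_f=\operatorname{lcm}_x b_{f,x}$ then gives equality. The only cosmetic slip is that the left side of the transformed equation picks up $\lambda^{d(s+1)}$ rather than $\lambda^{ds}$; the extra constant $\lambda^{d}$ is absorbed into the transformed operator.
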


\begin{proof}
Choose a neighborhood $U_0$ of the origin on which a Bernstein equation for
$b_{f,0}$ is defined.  For every point $x$, a nonzero scalar dilation sends
$x$ into $U_0$ and changes $f$ only by a nonzero constant.  Local
Bernstein--Sato polynomials are invariant under such an analytic coordinate
change and constant rescaling, so $b_{f,x}$ divides $b_{f,0}$.  Conversely,
the global polynomial is the least common multiple of all local polynomials
on the affine space, and in particular is divisible by $b_{f,0}$; see
\cite[Section 2.2]{Denef}.  Hence equality holds.
\end{proof}

\subsection{The new roots}

\begin{prop}\label{prop:primitive-roots}
For every $\alpha\in\cR_m$,
\[
b_{f_m}(-\alpha)=0.
\]
Each of these roots has multiplicity one.
\end{prop}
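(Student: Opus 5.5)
The plan combines three results already established: Proposition~\ref{prop:localized-poles}, Proposition~\ref{prop:pole-root}, and Corollary~\ref{cor:resolution-bound}.

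Fix $\alpha\in\cR_m$. By Proposition~\ref{prop:localized-poles}, for any neighborhood $W$ of the origin there is $\phi\in C_c^\infty(W)$ such that the branch zeta integral over $\Omega_m$ has a genuine simple pole at $s=-\alpha$. We take $W$ inside a coordinate neighborhood on which a local Bernstein equation for $b_{f_m,0}$ holds. Recall that $\Omega_m$ is a connected component of $(f_m>0)$, as shown in Section~\ref{sec:cone}. The local form of Proposition~\ref{prop:pole-root} then gives some $j\in\Z_{\geq0}$ with $b_{f_m,0}(-\alpha+j)=0$. Since $f_m$ is homogeneous, Lemma~\ref{lem:homogeneous-localization} yields $b_{f_m}=b_{f_m,0}$, so $-(\alpha-j)$ is a root of the global polynomial.

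It remains to show $j=0$; I expect this to be the only step needing care. The root $-(\alpha-j)\equiv-\alpha\pmod{\Z}$ has the same monodromy eigenvalue as $-\alpha$. By the definition \eqref{eq:primitive-exponents}, $\alpha$ has reduced denominator exactly $m$: indeed $2-1/m=(2m-1)/m$ with $(2m-1,m)=1$, and $2+k/m$ with $(k,m)=1$. Hence this eigenvalue $\exp(2\pi\ii\alpha)$ has exact order $m$. Corollary~\ref{cor:resolution-bound} therefore applies to the root $-(\alpha-j)$ and gives $\alpha-j\geq 2-1/m$.

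We now split into cases. If $\alpha=2-1/m$, then $j\geq0$ together with $\alpha-j\geq 2-1/m=\alpha$ forces $j=0$ directly. If $\alpha=2+k/m$ with $1\leq k\leq m-2$, then $j\geq1$ would give $\alpha-j\leq 1+k/m\leq 2-2/m<2-1/m$, a contradiction. So again $j=0$, and therefore $b_{f_m}(-\alpha)=0$.

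Multiplicity one follows from Corollary~\ref{cor:one-root-class}, which says that every root of $b_{f_m}$ is simple. The remaining bookkeeping consists of routine checks: that the local pole statement is compatible with the local version of Proposition~\ref{prop:pole-root}, and that the support of $\phi$ can be shrunk into the neighborhood where the local Bernstein equation holds. Proposition~\ref{prop:localized-poles} allows an arbitrary $W$, so this is immediate.
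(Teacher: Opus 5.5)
Your proposal is correct and follows essentially the same route as the paper: the localized pole, the local pole--root implication, the Lichtin bound $\alpha-j\geq 2-1/m$ for roots of exact denominator $m$ (which forces $j=0$), homogeneity to pass from local to global, and Corollary~\ref{cor:one-root-class} for simplicity. The differences are cosmetic: you pass to the global polynomial before applying Corollary~\ref{cor:resolution-bound}, and you read off the reduced denominator of $\alpha-j$ directly rather than through Theorem~\ref{thm:roots-give-monodromy}.
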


\begin{proof}
Choose the compactly supported test function supplied by
Proposition~\ref{prop:localized-poles} and apply the local form of
Proposition~\ref{prop:pole-root}.  For some $j\geq0$,
\begin{equation}\label{eq:shifted-root}
b_{f_m,0}(-\alpha+j)=0.
\end{equation}
Write $\beta=\alpha-j$.  Then $-\beta$ is a local root and, by
Theorem~\ref{thm:roots-give-monodromy}, with the sign convention fixed by
\eqref{eq:wu-rh-comparison}, its corresponding monodromy eigenvalue is
\[
\exp(2\pi\ii\beta)=\exp(2\pi\ii\alpha),
\]
which is primitive of order $m$.  The reduced denominator of $\beta$ is therefore
$m$.  In the resolution candidate formula
\eqref{eq:resolution-candidates}, no index $q<m$ can produce a fraction of
reduced denominator $m$.  Thus the relevant index is $q=m$, and hence
\begin{equation}\label{eq:beta-lower}
\beta\geq\frac Nm=2-\frac1m.
\end{equation}

If $\alpha=2-1/m$, then $j\geq1$ contradicts
\eqref{eq:beta-lower}.  If $\alpha=2+k/m$ with
$1\leq k\leq m-2$, then $j\geq1$ gives
\[
\beta\leq1+\frac{k}{m}
\leq2-\frac2m
<2-\frac1m,
\]
again a contradiction.  Thus $j=0$ in \eqref{eq:shifted-root}.

By Lemma~\ref{lem:homogeneous-localization}, the local root is a root of the
global polynomial.  Multiplicity one follows from
Corollary~\ref{cor:one-root-class}.
\end{proof}

\section{Induction and proof of the main theorem}\label{sec:induction}

\subsection{Inheritance through transverse slices}

\begin{lemma}\label{lem:divisibility}
If $1\leq q<m$, then
\begin{equation}\label{eq:divisibility}
b_{f_q}(s)\mid b_{f_m}(s).
\end{equation}
\end{lemma}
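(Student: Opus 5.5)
The plan is to use the local product structure of Theorem~\ref{thm:brogan}\ref{item:brogan-slice} together with the behavior of local Bernstein--Sato polynomials under smooth parameters, units, and analytic coordinate changes.

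Fix $1\leq q<m$ and a general point $p\in X_{m-q}\setminus X_{m-q-1}$; this stratum is nonempty because $m-q\geq1$. By Theorem~\ref{thm:brogan}\ref{item:brogan-slice}, there are local analytic coordinates $(z,w)$ near $p$, with $z\in\C^{2q-1}$ transverse and $w\in\C^{N-2q+1}$ parameters, and a unit $u$, such that
\[
f_m=u(z,w)\,f_q(z).
\]

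Three standard facts about local $b$-functions are then needed. First, they are invariant under analytic coordinate changes, since the Bernstein equation transports. Second, they are invariant under multiplication by a unit, $b_{uf,p}=b_{f,p}$; this is standard (Kashiwara), because $\shD[s]f^s$ and $\shD[s](uf)^s$ correspond under the automorphism $\partial\mapsto u^{s}\circ\partial\circ u^{-s}$. Third, adding smooth parameters does not change the $b$-function: $b_{g\otimes 1,(p',w)}=b_{g,p'}$. The inequality $\mid$ in one direction comes by applying the Bernstein operator of $g$, which involves no $w$. The other direction comes from setting $w$ to a constant, or equivalently from the fact that the quotient $\shD[s]g^s/\shD[s]g^{s+1}$ for $g\otimes1$ is the exterior tensor product with $\cO$. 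Combining the three facts gives
\[
b_{f_m,p}(s)=b_{f_q,0}(s).
\]

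By Lemma~\ref{lem:homogeneous-localization} applied to the homogeneous polynomial $f_q$, we have $b_{f_q,0}=b_{f_q}$. Since the global polynomial $b_{f_m}$ is the least common multiple of the local ones, $b_{f_q}=b_{f_m,p}$ divides $b_{f_m}$. For $q=1$ the statement is trivial, since $b_{f_1}=s+1$ divides any $b_f$; this case is also covered by the argument at a smooth point of $D_m$.

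The main obstacle is the unit-invariance and the passage between analytic and algebraic $b$-functions. The global $b$-function is algebraic, while the slice equivalence is only analytic. I would handle this by citing the standard fact that the algebraic local $b$-function at $p$ coincides with the analytic one, since $\shD^{\mathrm{an}}$ is faithfully flat over the algebraic $\shD$ at $p$. For the unit, I would cite the standard result that $b_{f,p}$ depends only on the germ of the hypersurface $(f=0)$ at $p$, so that $f$ may be replaced by $uf$.
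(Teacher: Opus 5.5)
Your proposal is correct and is essentially the paper's proof. It uses Brogan's transverse slice, invariance of the local $b$-function under analytic coordinate changes, units, and adjoining smooth parameters, then Lemma~\ref{lem:homogeneous-localization} and the least-common-multiple property. The differences are cosmetic: the paper proves the adjacent divisibilities $b_{f_{k-1}}\mid b_{f_k}$ at a general point of $X_1\setminus\{0\}$ and iterates, whereas you slice directly at a general point of $X_{m-q}\setminus X_{m-q-1}$ (equally covered by Theorem~\ref{thm:brogan}\ref{item:brogan-slice}), and you make explicit the analytic-versus-algebraic comparison that the paper leaves implicit.
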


\begin{proof}
It is enough to prove
\[
b_{f_{k-1}}(s)\mid b_{f_k}(s)
\]
for every $2\leq k\leq m$.  Consider the secant stratification associated
with $f_k$, and choose a general point $x\in X_1(2k-2)\setminus\{0\}$.
Applying Theorem~\ref{thm:brogan}\ref{item:brogan-slice} with $m=k$ and
$q=k-1$, we find that the analytic germ of $f_k$ at $x$ is, up to an
analytic coordinate change and multiplication by a unit, equal to
$f_{k-1}$ in the transverse variables and is independent of the remaining
smooth variables.  These operations do not change the local
Bernstein--Sato polynomial.  Indeed, analytic coordinate changes transport
Bernstein operators.  If a germ is multiplied by a unit $u$, the graph
embeddings are identified by the ambient change $(y,t)\mapsto(y,u(y)t)$;
this change preserves the Euler operator $t\partial_t$ that defines the
$b$-function.  Finally, adjoining smooth variables gives an external product,
and restriction to a transverse smooth slice gives the inverse operation.
Hence
\[
b_{f_k,x}(s)=b_{f_{k-1},0}(s).
\]
Since $f_{k-1}$ is homogeneous,
Lemma~\ref{lem:homogeneous-localization} gives
\[
b_{f_{k-1},0}(s)=b_{f_{k-1}}(s).
\]
The global Bernstein--Sato polynomial is the least common multiple of the
local Bernstein--Sato polynomials.  Therefore
\[
b_{f_{k-1}}(s)=b_{f_k,x}(s)\mid b_{f_k}(s).
\]
Iterating these adjacent divisibilities gives
\[
b_{f_q}(s)\mid b_{f_m}(s)
\qquad (1\leq q<m).
\]
\end{proof}

Lemma~\ref{lem:divisibility} carries all roots from smaller sizes to $f_m$;
Proposition~\ref{prop:primitive-roots} supplies the new primitive order-$m$
roots at the vertex.

\subsection{Completion of the proof}

\begin{proof}[Proof of Theorem~\ref{thm:main-intro}]
We argue by induction on $m$.  For $m=1$, $f_1=x_0$, so
$b_{f_1}(s)=s+1$.

Assume the formula has been proved for every $q<m$.  We classify roots by the
order of their monodromy eigenvalue.

For the eigenvalue $1$, the universal factor $s+1$ occurs for every nonconstant
polynomial.  By Corollary~\ref{cor:one-root-class}, this is the only root in
the integral congruence class, and it is simple.

Let $2\leq q<m$ and let $\lambda$ be primitive of order $q$.  By the induction
hypothesis, $b_{f_q}$ has exactly one root in the congruence class of $\lambda$;
as $\lambda$ varies through the primitive $q$-th roots, these roots are
\[
-2+\frac1q,
\qquad
-2-\frac{k}{q}
\quad(1\leq k\leq q-2,\ (k,q)=1).
\]
They are roots of $b_{f_m}$ by Lemma~\ref{lem:divisibility}.  The uniqueness
and simplicity assertions in Corollary~\ref{cor:one-root-class} exclude every
other root in the same congruence classes and prevent any increase in
multiplicity.

For primitive eigenvalues of order $m$, Proposition~\ref{prop:primitive-roots} gives
exactly the roots indexed by $\cR_m$, and
Corollary~\ref{cor:one-root-class} again gives uniqueness and simplicity.

It remains to exclude additional congruence classes.  By
Theorem~\ref{thm:roots-give-monodromy}, every root $\xi$ of $b_{f_m}(s)$
determines the local monodromy eigenvalue $\exp(-2\pi\ii\xi)$.  The eigenvalue
list in Theorem~\ref{thm:brogan}\ref{item:brogan-eigen} contains only roots of
unity of orders $1,\ldots,m$, all of which have already been treated.  Hence
there are no further roots, and multiplying the simple factors gives
\eqref{eq:main-formula}.

Finally, every number in $\cR_q$ has reduced fractional denominator $q$.
Thus roots belonging to different $q$ are distinct, and the $q$-th group
contains $\varphi(q)$ roots.  This proves the degree formula as well.
\end{proof}

\section{Examples and computer checks}\label{sec:checks}

The examples in this section are consequences and checks of the proof; no
computer calculation is used above.

\subsection{The first four determinants}

The theorem gives
\begin{align*}
b_{f_1}(s)&=s+1,\\
b_{f_2}(s)&=(s+1)(s+\tfrac32),\\
b_{f_3}(s)&=(s+1)(s+\tfrac32)(s+\tfrac53)(s+\tfrac73),\\
b_{f_4}(s)&=(s+1)(s+\tfrac32)(s+\tfrac53)(s+\tfrac73)
(s+\tfrac74)(s+\tfrac94).
\end{align*}

For $m=3$, \eqref{eq:zeta-formula} becomes
\[
\cZ_3(s)=2^{2-6\tau}(2\pi)^2\Gamma(\tau-1)
\frac{\Gamma(2\tau-1)}{\Gamma(\tau)}
\frac{\Gamma(3\tau-1)}{\Gamma(2\tau)},
\qquad \tau=s+2.
\]
It has the two primitive poles $s=-5/3$ and $s=-7/3$.  It also has a pole at
$s=-8/3$, which is explained by the root $-5/3$ after the shift
$-8/3+1=-5/3$.  Thus the inclusion in
Proposition~\ref{prop:pole-root} need not be an equality of pole and root sets.

\subsection{Macaulay2}

The following commands use the \texttt{Dmodules} package in Macaulay2
\cite{Macaulay2}.  The computations for $m=3,4$ reproduce the displayed
factorizations.  These computations are independent checks and are not used in
the proof.

\begin{verbatim}
needsPackage "Dmodules"
D = QQ[x0,x1,x2,x3,x4,D0,D1,D2,D3,D4,
WeylAlgebra => {x0=>D0, x1=>D1, x2=>D2, x3=>D3, x4=>D4}]
H = matrix {
    {x0, x1, x2},
    {x1, x2, x3},
    {x2, x3, x4}
}
f = det H
b = globalBFunction(f)
factor b

needsPackage "Dmodules"
D = QQ[x0,x1,x2,x3,x4,x5,x6,D0,D1,D2,D3,D4,D5,D6,
WeylAlgebra => {x0=>D0, x1=>D1, x2=>D2, x3=>D3,
                x4=>D4, x5=>D5, x6=>D6}]
H = matrix {
    {x0, x1, x2, x3},
    {x1, x2, x3, x4},
    {x2, x3, x4, x5},
    {x3, x4, x5, x6}
}
f = det H
b = globalBFunction(f)
factor b
\end{verbatim}

\bibliographystyle{amsalpha}
\bibliography{references}

\providecommand{\bysame}{\leavevmode\hbox to3em{\hrulefill}\thinspace}
\providecommand{\MR}{\relax\ifhmode\unskip\space\fi MR }
\providecommand{\MRhref}[2]{%
  \href{https://mathscinet.ams.org/mathscinet-getitem?mr=#1}{#2}}
\providecommand{\href}[2]{#2}
\begin{thebibliography}{DLMF}

\bibitem[Ber72]{Bernstein}
I.~N. Bernstein,
\emph{The analytic continuation of generalized functions with respect to a
parameter}, Funct. Anal. Appl. \textbf{6} (1972), 273--285,
\href{https://doi.org/10.1007/BF01077645}{doi:10.1007/BF01077645}.

\bibitem[Ber92]{Bertram}
A. Bertram,
\emph{Moduli of rank-$2$ vector bundles, theta divisors, and the geometry of
curves in projective space}, J. Differential Geom. \textbf{35} (1992),
429--469,
\href{https://doi.org/10.4310/jdg/1214448083}
{doi:10.4310/jdg/1214448083}.

\bibitem[Bro26]{Brogan}
D. Brogan,
\emph{Invariants of the singularities of secant varieties of curves},
J.~\'{E}c. polytech. Math. \textbf{13} (2026), 1--39,
\href{https://doi.org/10.5802/jep.321}{doi:10.5802/jep.321}.

\bibitem[Den91]{Denef}
J. Denef,
\emph{Report on Igusa's local zeta function}, S\'{e}minaire Bourbaki,
Exp.~741, Ast\'{e}risque \textbf{201--203} (1991), 359--386,
\href{https://doi.org/10.24033/ast.122}{doi:10.24033/ast.122}.

\bibitem[FW08]{ForresterWarnaar}
P.~J. Forrester and S.~O. Warnaar,
\emph{The importance of the Selberg integral}, Bull. Amer. Math. Soc.
\textbf{45} (2008), 489--534,
\href{https://doi.org/10.1090/S0273-0979-08-01221-4}
{doi:10.1090/S0273-0979-08-01221-4}.

\bibitem[GS]{Macaulay2}
D.~R. Grayson and M.~E. Stillman,
\emph{Macaulay2, a software system for research in algebraic geometry},
available at \url{https://macaulay2.com/}.

\bibitem[HTT08]{HTT}
R. Hotta, K. Takeuchi, and T. Tanisaki,
\emph{$D$-modules, perverse sheaves, and representation theory},
Progress in Mathematics, vol.~236, Birkh\"auser Boston, 2008,
\href{https://doi.org/10.1007/978-0-8176-4523-6}
{doi:10.1007/978-0-8176-4523-6}.

\bibitem[Igu00]{Igusa}
J.-I. Igusa,
\emph{An Introduction to the Theory of Local Zeta Functions},
AMS/IP Studies in Advanced Mathematics, vol.~14, American Mathematical
Society and International Press, 2000,
\href{https://doi.org/10.1090/amsip/014}{doi:10.1090/amsip/014}.

\bibitem[Kas77]{Kashiwara}
M. Kashiwara,
\emph{$B$-functions and holonomic systems: rationality of roots of
$B$-functions}, Invent. Math. \textbf{38} (1976/77), 33--53,
\href{https://doi.org/10.1007/BF01390168}{doi:10.1007/BF01390168}.

\bibitem[Lic89]{Lichtin}
B. Lichtin,
\emph{Poles of $|f(z,w)|^{2s}$ and roots of the $b$-function},
Ark. Mat. \textbf{27} (1989), 283--304,
\href{https://doi.org/10.1007/BF02386377}{doi:10.1007/BF02386377}.

\bibitem[L\H{o}r20]{LorinczSlices}
A.~C. L\H{o}rincz,
\emph{Decompositions of Bernstein--Sato polynomials and slices},
Transform. Groups \textbf{25} (2020), no.~2, 577--607,
\href{https://doi.org/10.1007/s00031-019-09526-7}
{doi:10.1007/s00031-019-09526-7}.

\bibitem[LRWW17]{LRWW}
A.~C. L\H{o}rincz, C. Raicu, U. Walther, and J. Weyman,
\emph{Bernstein--Sato polynomials for maximal minors and sub-maximal
Pfaffians}, Adv. Math. \textbf{307} (2017), 224--252,
\href{https://doi.org/10.1016/j.aim.2016.11.011}
{doi:10.1016/j.aim.2016.11.011}.

\bibitem[Mal75]{Malgrange}
B. Malgrange,
\emph{Le polyn\^ome de Bernstein d'une singularit\'e isol\'ee}, in
\emph{Fourier Integral Operators and Partial Differential Equations}
(Colloq. Internat., Univ. Nice, Nice, 1974), Lecture Notes in Math., vol.~459,
Springer, Berlin, 1975, 98--119,
\href{https://doi.org/10.1007/BFb0074194}{doi:10.1007/BFb0074194}.

\bibitem[DLMF]{DLMF}
NIST Digital Library of Mathematical Functions,
\emph{Gamma Function}, Chapter~5,
\url{https://dlmf.nist.gov/5}.

\bibitem[Pop21]{PopaNotes}
M. Popa,
\emph{$D$-modules in birational geometry}, expanded lecture notes for
Math 296, Harvard University, Spring 2021,
\href{https://people.math.harvard.edu/~mpopa/notes/DMBG-posted.pdf}
{available online}.

\bibitem[Sat90]{SatoPVS}
M. Sato,
\emph{Theory of prehomogeneous vector spaces (algebraic part)---the English
translation of Sato's lecture from Shintani's note}, notes by T. Shintani,
translated from the Japanese by M. Muro, Nagoya Math. J. \textbf{120} (1990),
1--34,
\href{https://doi.org/10.1017/S0027763000003214}
{doi:10.1017/S0027763000003214}.

\bibitem[SS74]{SatoShintani}
M. Sato and T. Shintani,
\emph{On zeta functions associated with prehomogeneous vector spaces},
Ann. of Math. (2) \textbf{100} (1974), no.~1, 131--170,
\href{https://doi.org/10.2307/1970844}{doi:10.2307/1970844}.

\bibitem[SY26]{SchnellYang}
C. Schnell and R. Yang,
\emph{A log resolution for the theta divisor of a hyperelliptic curve},
Algebr. Geom. \textbf{13} (2026), no.~4, 462--503,
\href{https://doi.org/10.14231/AG-2026-014}{doi:10.14231/AG-2026-014}.

\bibitem[Wu21]{Wu}
L. Wu,
\emph{On the comparison of nearby cycles via $b$-functions},
J. Singul. \textbf{23} (2021), 92--106,
\href{https://doi.org/10.5427/jsing.2021.23e}
{doi:10.5427/jsing.2021.23e}.

\end{thebibliography}

\end{document}